\newtheorem{thm}{Theorem}[section]
\newtheorem{prop}[thm]{Proposition}
\newtheorem{cor}[thm]{Corollary}
\newtheorem{lem}[thm]{Lemma}
\theoremstyle{definition}
\newcommand{\comment}[1]{}
\numberwithin{equation}{section}
\newcommand{\epf}{ $\Box$\medskip}
\def\lsim{\raisebox{-1ex}{$~\stackrel{\textstyle <}{\sim}~$}}
\theoremstyle{definition}
\begin{document}
\title[Representation theorem and Hardy-Orlicz spaces]{A generalization of representation theorems in Hardy-Orlicz spaces on the upper complex half-plane}
\author[J. M. T. Dje]{Jean-Marcel Tanoh Dje}
\address{Unité de recherche et d'expertises numériques, Université virtuelle de Côte d'Ivoire, Cocody II Plateaux - 28BP536 Abidjan 28}
\email{{\tt tanoh.dje@uvci.edu.ci}}
\author[J. Feuto]{Justin Feuto}
\address{Laboratoire de Math\'ematiques et Application, UFR Math\'ematiques et Informatique, Universit\'e F\'elix Houphou\"et-Boigny Abidjan-Cocody, 22 B.P 1194 Abidjan 22. C\^ote d'Ivoire}
\email{{\tt feuto.justin@ufhb.edu.ci}}

\subjclass{}
\keywords{}

\date{}
\begin{abstract}
In this paper, we give Poisson and Cauchy representation theorems in
 Hardy-Orlicz spaces on the upper complex half-plane. We use these theorems
 for the construction of  dual spaces of certain Hardy-Orlicz spaces and also for the characterization of some classical operators in Orlicz spaces.
\end{abstract}



\maketitle

\section{Introduction.}
Representation theorems are very important tools when working in Hardy spaces. 
Regarding Hardy space on the  upper complex half-plane, there are two versions of this  theorem depending on whether the space is defined as a subspace of harmonic spaces or holomorphic spaces.

Recall that the  upper complex half-plane is the subset $\mathbb{C}_{+}$ of the complex plane $\mathbb{C}$ defined by
$$\mathbb{C}_{+}:=\left\{ z=x+iy \in \mathbb{C} :   y > 0 \right\}. $$
We denote by  $h^{p}(\mathbb{C}_{+})$  the
space of all harmonic functions $F$ on $\mathbb{C}_{+}$ wich satisfy
$$ \|F\|_{h^{p}}:=\sup_{y > 0}\|F(.+iy)\|_{L^{p}} < \infty,     $$
where $\|\cdot\|_{L^{p}}$ stands for the usual Lebesgue norm on $\mathbb R$.
For $p\geq 1$,  $(h^{p}(\mathbb{C}_{+}), \|.\|_{h^{p}})$ is a Banach space.

The Poisson representation theorem  given in \cite[Theorem 11.6]{javadmas} can be reformulated as follow.

Let $1<p<\infty$ and $F$ be a harmonic function on $\mathbb{C}_{+}$. The following assertions are equivalent:
\begin{itemize}
\item[(i)]  $F\in h^{p}(\mathbb{C}_{+})$. 
\item[(ii)] There exists a unique $f\in L^{p}(\mathbb{R})$ such that
\begin{equation}\label{eq:luxboqurg}
F(x+iy)= \frac{1}{\pi}\int_{\mathbb{R}}\frac{y}{(x-t)^{2}+y^{2}}f(t)dt,~~ \forall~x+iy \in \mathbb{C_{+}}.\end{equation}
Moreover,  
$$  \|F\|_{h^{p}}  =\|f\|_{L^{p}}.    $$
\end{itemize}
The above result coupled with the fact that the Poisson integral (\ref{eq:luxboqurg}) is an isometric isomorphism of $h^{p}(\mathbb{C}_{+})$ onto $L^{p}(\mathbb{R})$ allow  to identify the Hardy space $h^{p}(\mathbb{C}_{+})$ with the Lebesgue space $L^{p}(\mathbb{R})$. 

Denote by $\mathcal{M}(\mathbb{R})$ the set of  bounded Radon measures on  $\mathbb{R}$. 
Equipped with the norm
$$ \mu\longmapsto\|\mu\|_{\mathcal{M}}:=|\mu|(\mathbb{R}),      $$
$\mathcal{M}(\mathbb{R})$ is a Banach space. Notice that $|\mu|$ is the total variation of $\mu$.\\
For the case $p=1$, we use Radon measures for the representation result.
For a harmonic function  $F$ on $\mathbb{C}_{+}$, the following assertions are equivalent:
\begin{itemize}
 \item[(i)]  $F\in h^{1}(\mathbb{C}_{+})$. 
 \item[(ii)] There exists a unique $\mu \in \mathcal{M}(\mathbb{R})$ such that
 $$ F(x+iy)= \frac{1}{\pi}\int_{\mathbb{R}}\frac{y}{(x-t)^{2}+y^{2}}d\mu(t),~~ \forall~x+iy \in \mathbb{C_{+}}.  $$
Moreover,  
 $$\|F\|_{h^{1}}  =\|\mu\|_{\mathcal{M}}.$$
 \end{itemize}
As in the case of $p>1$,  we can 
identify $h^{1}(\mathbb{C}_{+})$ with $\mathcal{M}(\mathbb{R})$.

We know consider the spaces 
$H^{p}(\mathbb{C}_{+})$ consists of all analytic functions $F$ on $\mathbb{C}_{+}$ satisfying
$$ \|F\|_{H^{p}}:=\sup_{y > 0}\|F(.+iy)\|_{L^{p}} < \infty     $$
and 
$$  H^{p}(\mathbb{R})=\left\{ f\in L^{p}(\mathbb{R}): \int_{\mathbb{R}}\frac{f(t)}{t-\overline{z}}dt=0, ~~\forall~z \in \mathbb{C}_{+}   \right\}.
      $$
For $p\geq 1$, $(H^{p}(\mathbb{R}), \|.\|_{L^{p}})$ (resp. $(H^{p}(\mathbb{C}_{+}), \|.\|_{H^{p}})$) is a closed subspace of  $(L^{p}(\mathbb{R}), \|.\|_{L^{p}})$
 (resp. $(h^{p}(\mathbb{C}_{+}), \|.\|_{h^{p}})$).

Cauchy's representation theorem in Hardy spaces (see for example \cite{Jbgarnett},\cite{javadmas}) can be reformulated as follows: let $1\leq p<\infty$ and $F$ be an analytic function on $\mathbb{C}_{+}$. The following assertions are equivalent:
\begin{itemize}
\item[(i)]  $F\in H^{p}(\mathbb{C}_{+})$. 
\item[(ii)] There exists a unique $f\in H^{p}(\mathbb{R})$ such that
\begin{equation}\label{eq:luxboqaurg}
F(z)= \frac{1}{2\pi i}\int_{\mathbb{R}}\frac{f(t)}{t-z}dt,~~ \forall~z \in \mathbb{C_{+}}.\end{equation}
Moreover,  
$$  \|F\|_{H^{p}}  =\|f\|_{L^{p}}.      $$
\end{itemize}
The Cauchy integral (\ref{eq:luxboqaurg}) is an isometric isomorphism of the Hardy space $H^{p}(\mathbb{C}_{+})$ onto the space $H^{p}(\mathbb{R})$. Thus, the Hardy space $H^{p}(\mathbb{C}_{+})$ is identified with a closed subspace of $L^{p}(\mathbb{R})$.

Note that the use of the Poisson representation theorem (resp. Cauchy) has made it possible to simplify the demonstration of several problems related to Hardy spaces. Let us quote among others, the control of certain classical operators in Hardy spaces (see for example \cite{berosim},\cite{Pduren2},\cite{Jbgarnett},\cite{koossi},\cite{javadmas}) and the characterization of  dual of Hardy's space (see \cite{Jbgarnett}).

In view of what precedes, it seemed useful to extend these results to Orlicz's space which generalizes  Lebesgue's space.

This paper is organized as follows :

In the next section, we state our main results.
Section III is devoted to some useful properties and prerequisites on Hardy-Orlicz spaces. In section IV, we give the proof of the results in section II.

We propose the following abbreviation $\mathrm{ A}\lsim \mathrm{ B}$ for the inequalities $\mathrm{ A}\leq C\mathrm{ B}$, where $C$ is a positive constant independent of the main parameters. If $\mathrm{ A}\lsim \mathrm{ B}$ and $\mathrm{ B}\lsim \mathrm{ A}$, then we write $\mathrm{ A}\approx \mathrm{ B}$.
In all what follows, the letter $C$ will be used for non-negative constants independent of the relevant variables that may change from one occurrence to another. Constants with subscript, such as $C_{s}$, may also change in different occurrences, but depend on the parameters mentioned in it. 

\section{Statement of main results.}

In this paper, a continuous and nondecreasing function $\Phi$ from $\mathbb{R}_{+}$ onto itself is called a growth function. Observe that if $\Phi$ is a growth function, then $\Phi(0)=0$ and $\lim_{t \to +\infty}\Phi(t)=+\infty$. If  $\Phi(t)> 0$ for all  $t> 0$ then $\Phi$ is a homeomorphism of $\mathbb{R}_{+}$ onto $\mathbb{R}_{+}$.
 
Let $\Phi$ be a  growth function. 
\begin{itemize}
\item The Orlicz space  $L^{\Phi}(\mathbb{R})$ is the set of all equivalent
classes (in the usual sense) of measurable functions $f : \mathbb{R} \longrightarrow \mathbb{C}$ which satisfy
$$ \|f\|_{L^{\Phi}}^{lux}:=\inf\left\{\lambda>0 : \int_{\mathbb{R}}\Phi\left(\dfrac{| f(x)|}{\lambda}\right)dx \leq 1  \right\}< \infty. $$
\item The Hardy$-$Orlicz space  $h^{\Phi}(\mathbb{C_{+}})$ is the set of harmonic functions on $\mathbb{C_{+}}$ which satisfy 
$$   \|F\|_{h^{\Phi}}^{lux}:=\sup_{y> 0}\|F(.+y)\|_{L^{\Phi}}^{lux} < \infty.   $$  
\end{itemize}
If $\Phi$ is convex (resp. convex and  $\Phi(t)> 0$ for all  $t> 0$) then $(L^{\Phi}(\mathbb{R}),  \|.\|_{L^{\Phi}}^{lux})$ (resp. $\left(h^{\Phi}(\mathbb{C_{+}}), \|.\|_{h^{\Phi}}^{lux}\right)$) is a Banach space.  The space  $L^{\Phi}$ (resp.  $h^{\Phi}$) generalizes the Lebesgue space $L^{p}$ (resp. Hardy space $h^{p}$) for $0< p < \infty$.

Let $\Phi$ be a convex growth function. We say that $\Phi$ is an $N-$function if the following assertions are
satisfied:
\begin{itemize}
\item[(i)] $\Phi(t)=0$ if and only if $t=0$,
\item[(ii)] $\lim_{t \to 0}\frac{\Phi(t)}{t}=0$  \hspace*{0.25cm}\text{and} \hspace*{0.25cm} $\lim_{t \to \infty}\frac{\Phi(t)}{t}=+\infty.$
\end{itemize}
The map $t\mapsto \exp(t)-t-1$ is an example of $N-$function.

\begin{thm}\label{pro:main11a6}
Let $\Phi$ be an N$-$function and $F$ a harmonic function on $\mathbb{C}_{+}$. The following assertions are equivalent:
\begin{itemize}
\item[(i)]  $F\in h^{\Phi}(\mathbb{C}_{+})$.
\item[(ii)] There exists a unique  $f \in L^{\Phi}(\mathbb{R})$ such that 
 \begin{equation}\label{eq:556655656}
 F(x+iy)= \frac{1}{\pi}\int_{\mathbb{R}}\frac{y}{(x-t)^{2}+y^{2}}f(t)dt,~~ \forall~x+iy \in \mathbb{C_{+}}.    \end{equation}
\end{itemize}
Moreover, we have
$$  \|F\|_{h^{\Phi}}^{lux}=\lim_{y \to 0}\|F(.+iy)\|_{L^{\Phi}}^{lux}=\| f\|_{L^{\Phi}}^{lux}.         $$
\end{thm}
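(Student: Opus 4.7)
The plan is to prove the equivalence (i) $\Leftrightarrow$ (ii) along the classical template: the implication (ii) $\Rightarrow$ (i) follows by showing that the Poisson integral is a norm contraction from $L^{\Phi}(\mathbb{R})$ into $h^{\Phi}(\mathbb{C}_{+})$, while (i) $\Rightarrow$ (ii) will be obtained by extracting a boundary trace $f$ via weak-$\ast$ compactness in $L^{\Phi}$, made available by the duality theory for $N$-functions.

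For (ii) $\Rightarrow$ (i), I would exploit the convexity of $\Phi$ together with the fact that $P_y(t) := \frac{1}{\pi}\cdot\frac{y}{t^{2}+y^{2}}$ is a probability density on $\mathbb{R}$. Jensen's inequality applied to the Poisson integral gives, for every $\lambda>0$,
$$\Phi\!\left(\frac{|F(x+iy)|}{\lambda}\right)\le\int_{\mathbb{R}}P_{y}(x-t)\,\Phi\!\left(\frac{|f(t)|}{\lambda}\right)dt.$$
Integrating in $x$ and applying Fubini yields $\int\Phi(|F(x+iy)|/\lambda)\,dx\le\int\Phi(|f(t)|/\lambda)\,dt$. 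Choosing $\lambda=\|f\|_{L^{\Phi}}^{lux}$ produces $\|F(\cdot+iy)\|_{L^{\Phi}}^{lux}\le\|f\|_{L^{\Phi}}^{lux}$ for every $y>0$, so $F\in h^{\Phi}(\mathbb{C}_{+})$ and $\|F\|_{h^{\Phi}}^{lux}\le\|f\|_{L^{\Phi}}^{lux}$. Applying the same computation to the semigroup identity $F(\cdot+i(y_{1}+y_{2}))=P_{y_{1}}\ast F(\cdot+iy_{2})$ shows that $y\mapsto\|F(\cdot+iy)\|_{L^{\Phi}}^{lux}$ is nonincreasing, hence $\|F\|_{h^{\Phi}}^{lux}=\lim_{y\to 0^{+}}\|F(\cdot+iy)\|_{L^{\Phi}}^{lux}$.

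For (i) $\Rightarrow$ (ii), let $\Psi$ denote the complementary $N$-function of $\Phi$ and $E^{\Psi}$ the closure in $L^{\Psi}$ of bounded compactly supported functions. Classical Orlicz duality identifies $(E^{\Psi})^{\ast}$ with $L^{\Phi}$ through the integral pairing, so norm-bounded subsets of $L^{\Phi}$ are weak-$\ast$ sequentially compact. Because $F\in h^{\Phi}$, the family $\{F(\cdot+iy)\}_{y>0}$ is bounded in $L^{\Phi}$; choose $y_{n}\downarrow 0$ and a weak-$\ast$ limit $f\in L^{\Phi}(\mathbb{R})$ along a subsequence. To identify $F$ as the Poisson integral of $f$, fix $x+iy\in\mathbb{C}_{+}$ and use the semigroup identity
$$F(x+i(y+y_{n}))=\int_{\mathbb{R}}P_{y}(x-t)\,F(t+iy_{n})\,dt.$$
The slice $t\mapsto P_{y}(x-t)$ is bounded and decays like $t^{-2}$, and the $N$-function property $\Psi(s)/s\to 0$ as $s\to 0$ makes its tail modular vanish, so $P_{y}(x-\cdot)\in E^{\Psi}$. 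Weak-$\ast$ convergence then passes to the limit on the right, while continuity of $F$ handles the left side, yielding the representation (2.2). Weak-$\ast$ lower semicontinuity of $\|\cdot\|_{L^{\Phi}}^{lux}$ gives $\|f\|_{L^{\Phi}}^{lux}\le\liminf_{n}\|F(\cdot+iy_{n})\|_{L^{\Phi}}^{lux}\le\|F\|_{h^{\Phi}}^{lux}$, which combined with the contraction from paragraph two forces equality of all three quantities in the final display.

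Uniqueness of $f$ comes from the fact that $P_{y}\ast g\to g$ almost everywhere as $y\to 0^{+}$ for $g\in L^{\Phi}$, so any two $L^{\Phi}$-representatives of the same Poisson integral must coincide a.e. The delicate part of the argument is the duality machinery needed to power the weak-$\ast$ extraction: verifying that $L^{\Phi}\cong (E^{\Psi})^{\ast}$ under the integral pairing and that $P_{y}(x-\cdot)$ indeed lies in $E^{\Psi}$ (not merely in $L^{\Psi}$), both of which use essentially the $N$-function hypothesis on $\Phi$. These preliminaries, and the associated behavior of Luxemburg norms under weak-$\ast$ limits, are exactly the structural facts that Section III should make available.
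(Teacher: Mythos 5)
Your proposal is correct and follows essentially the same route as the paper: Jensen's inequality plus Fubini for the contraction $\|U_f\|_{h^{\Phi}}^{lux}\leq\|f\|_{L^{\Phi}}^{lux}$, and for the converse a weak-$\ast$ compactness extraction of the boundary function $f$ in $L^{\Phi}=(M^{\Psi})^{*}$ (your $E^{\Psi}$ is the Morse--Transue space $M^{\Psi}$ of the paper), tested against $P_{y}(x-\cdot)$ via the Poisson representation of the vertical translates $F(\cdot+iy_{n})$. The only step you take for granted that the paper proves separately (Proposition 3.8 and Lemma 3.9, via the pointwise bound $|F(x+iy)|\leq\Phi^{-1}(2/(\pi y))\|F\|_{h^{\Phi}}^{lux}$) is that a general $F\in h^{\Phi}(\mathbb{C}_{+})$ actually satisfies the semigroup identity $F(x+i(y+y_{n}))=\int_{\mathbb{R}}P_{y}(x-t)F(t+iy_{n})\,dt$, but this is covered by the preliminaries you correctly defer to Section III.
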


If  $\Phi(t)=t^{p}$ with $1< p< \infty$ then Theorem \ref{pro:main11a6} corresponds to  \cite[Theorem 11.6]{javadmas}. If  $\Phi$ is an N$-$function then the Poisson integral (\ref{eq:556655656}) is an isometric isomorphism
 of $h^{\Phi}(\mathbb{C}_{+})$ onto $L^{\Phi}(\mathbb{R})$.

\begin{thm}\label{pro:main11aaq6}
Let $\Phi$ be a convex growth function such that $\Phi(t)> 0$ for all  $t> 0$. For  $F\in h^{\Phi}(\mathbb{C}_{+})$, there exists a unique Borel measure $\mu$ such that
$$ \int_{\mathbb{R}}\frac{d|\mu|(t)}{1+t^{2}} < \infty 
  $$
and
$$   F(x+iy)= \frac{1}{\pi}\int_{\mathbb{R}}\frac{y}{(x-t)^{2}+y^{2}}d\mu(t),~~ \forall~x+iy \in \mathbb{C_{+}}.      $$
Moreover,
\begin{equation}\label{eq:deltaqa2}
\lim_{y\to 0}\int_{\mathbb{R}}\varphi(x) F(x+iy)dx= \int_{\mathbb{R}}\varphi(x)d\mu(x), ~~ \forall~\varphi\in C_{c}(\mathbb{R}),\end{equation}
where $C_{c}(\mathbb{R})$ is the set of continuous functions with compact support on $\mathbb{R}$.
\end{thm}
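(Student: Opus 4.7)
The plan is to adapt the classical argument used in $h^{1}(\mathbb{C}_{+})$, producing $\mu$ as a weak-$\ast$ accumulation point of the measures $F(\cdot+iy)\,dx$ as $y\downarrow 0$. I would begin by promoting the assumption $M:=\|F\|_{h^{\Phi}}^{lux}<\infty$ to a uniform weighted $L^{1}$ bound. Taking any $\lambda>M$, one has $\int_{\mathbb{R}}\Phi(|F(x+iy)|/\lambda)\,dx\leq 1$, and Jensen's inequality applied on $[-R,R]$ yields
$$ \int_{-R}^{R}|F(x+iy)|\,dx \;\leq\; 2R\lambda\,\Phi^{-1}\!\Big(\tfrac{1}{2R}\Big), $$
where $\Phi^{-1}$ is well defined because convexity and the hypothesis $\Phi(t)>0$ for $t>0$ force $\Phi$ to be strictly increasing. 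A dyadic decomposition then gives the weighted estimate $\sup_{y>0}\int_{\mathbb{R}}|F(x+iy)|/(1+x^{2})\,dx\leq C(M)$, and the same calculation shows that the tails $\int_{|x|>R}|F(x+iy)|/(1+x^{2})\,dx$ go to $0$ uniformly in $y$ as $R\to\infty$.

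With this in hand, the finite complex Borel measures $d\nu_{y}(t):=F(t+iy)\,dt/(1+t^{2})$ form a uniformly bounded and tight family on $\mathbb{R}$. Banach--Alaoglu together with Prokhorov's theorem (in the signed/complex version) yield a sequence $y_{n}\downarrow 0$ and a finite complex measure $\nu$ such that $\nu_{y_{n}}\to\nu$ narrowly, i.e.\ tested against $C_{b}(\mathbb{R})$. I would then set $d\mu:=(1+t^{2})\,d\nu$, so that $\int d|\mu|(t)/(1+t^{2})=|\nu|(\mathbb{R})<\infty$, giving the candidate measure.

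The key step is then the semigroup identity
$$ F(x+i(y+s))=\frac{1}{\pi}\int_{\mathbb{R}}\frac{y\,F(t+is)}{(x-t)^{2}+y^{2}}\,dt,\qquad y,s>0. $$
Fixing $s>0$, the function $G(z):=F(z+is)$ is harmonic on $\mathbb{C}_{+}$, continuous up to $\mathbb{R}$, and its boundary trace satisfies $\int|G(t)|/(1+t^{2})\,dt<\infty$ by the first step. A classical representation result for harmonic functions on $\mathbb{C}_{+}$ with this integrability (see e.g.\ Garnett, \emph{Bounded Analytic Functions}, Ch.~II) forces $G(x+iy)$ to coincide with its Poisson integral up to an additive term $c\,y$. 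This $c$ must vanish: by Jensen the Poisson part lies in $h^{\Phi}$ with norm $\leq M$, and $F(\cdot+i(y+s))$ does too, so any nonzero $c$ would leave a pure $cy$-term whose Luxemburg norm on each horizontal line is infinite (constants have infinite $L^{\Phi}$-norm once $\Phi(t)>0$ for $t>0$), a contradiction. Substituting $s=y_{n}$ and letting $n\to\infty$, the left side tends to $F(x+iy)$ by continuity, while the right side rewrites as $\int\psi_{x,y}(t)\,d\nu_{y_{n}}(t)$ with $\psi_{x,y}(t):=y(1+t^{2})/(\pi((x-t)^{2}+y^{2}))\in C_{b}(\mathbb{R})$, so narrow convergence delivers the Poisson formula for $F$ in terms of $\mu$.

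Uniqueness of $\mu$ follows from the injectivity of the Poisson transform on measures satisfying $\int d|\mu|/(1+t^{2})<\infty$: if two such measures produced the same $F$, their difference would have identically vanishing Poisson integral, and testing against $P_{y}\ast\varphi\to\varphi$ uniformly for $\varphi\in C_{c}^{\infty}(\mathbb{R})$ as $y\to 0$ shows the difference is zero. This uniqueness upgrades the subsequential convergence to the full vague convergence (\ref{eq:deltaqa2}): any sequence $y'_{n}\downarrow 0$ admits a subsubsequence along which $F(\cdot+iy'_{n_{k}})\,dx$ converges vaguely to a measure that, by the construction above, again represents $F$ and hence equals $\mu$. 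The main obstacle I expect is establishing the semigroup identity, since Theorem~\ref{pro:main11a6} requires $\Phi$ to be an $N$-function rather than an arbitrary convex growth function positive on $(0,\infty)$, so one cannot invoke it directly; eliminating the spurious $cy$-term demands the separate observation above on infinite Luxemburg norms of nonzero constants.
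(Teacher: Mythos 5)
Your architecture (uniform weighted $L^{1}$ bound on horizontal slices, weak-$\ast$/tightness extraction of $\mu$, semigroup identity, passage to the limit) is genuinely different from the paper's, which transports $F$ to the disk via the Cayley transform, shows $F\circ\varphi\in h^{1}(\mathbb{D})$ (Theorem \ref{pro:main5Qpm5}), pulls back the classical Herglotz representation, and kills the resulting term $\alpha y$ (the atom of $\nu$ at $-1$) using the pointwise decay of Proposition \ref{pro:main6mq4}. Your first step (the Jensen/dyadic estimate) and your last steps (uniqueness and the upgrade to the full limit (\ref{eq:deltaqa2})) are sound. The genuine gap is in your ``key step''. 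The representation result you invoke --- that a harmonic $G$ on $\mathbb{C}_{+}$, continuous up to $\mathbb{R}$ with boundary trace in $L^{1}\bigl(dt/(1+t^{2})\bigr)$, must equal $c\,y$ plus the Poisson integral of its boundary values --- is false under those hypotheses: $G(z)=\mathrm{Im}(z^{2})=2xy$ is harmonic, continuous up to $\mathbb{R}$, has boundary trace $0\in L^{1}\bigl(dt/(1+t^{2})\bigr)$, yet is not of the form $c\,y$. Integrability of the boundary trace alone gives no control in the interior; one needs either a bound on \emph{all} horizontal lines or growth control in $\mathbb{C}_{+}$.

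Two repairs exist, and they show the step is not free. (a) The cheap one: $F(\cdot+is)$ is not merely continuous up to $\mathbb{R}$ but \emph{bounded} on $\overline{\mathbb{C}_{+}}$, since the sub-mean-value property and Jensen give $|F(x+iy)|\leq \Phi^{-1}\bigl(2/(\pi y)\bigr)\|F\|_{h^{\Phi}}^{lux}$ (Proposition \ref{pro:main6mq4}); a bounded harmonic function continuous on $\overline{\mathbb{C}_{+}}$ \emph{is} the Poisson integral of its boundary values with no $c\,y$ term (this is the paper's Lemma \ref{pro:main6aapaqmq4k}, via Garnett's Lemma 3.4), so your elimination of $c$ becomes unnecessary. (b) The expensive one: invoke the bound $\sup_{y>0}\int_{\mathbb{R}}|F(x+iy)|\,dx/(1+x^{2})<\infty$ that you did establish; but the statement ``harmonic plus this uniform bound implies $c\,y$ plus a Poisson--Stieltjes integral'' is, up to showing $c=0$, precisely Theorem \ref{pro:main11aaq6} itself, and proving it is what the paper's Cayley-transform argument is for, so citing it as classical here is circular. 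With repair (a) inserted, the remainder of your proof goes through and constitutes a legitimate alternative to the paper's route.
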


Let $\Phi$ be a growth function. We say that
\begin{itemize}
\item $\Phi$ satisfies the $\Delta_{2}-$condition (or $\Phi \in \Delta_{2}$) if there exists a constant
$K > 1$ such that
\begin{equation}\label{eq:delta2}
\Phi(2t) \leq K \Phi(t),~ \forall~ t > 0.\end{equation}
\item  $\Phi$ is of upper-type $q>0$ if there exists a constant
$C_{q}> 0$ such that
\begin{equation}\label{eq:sui8n}
\Phi(st)\leq C_{q}t^{q}\Phi(s),~~\forall~s>0, ~\forall~t\geq 1.\end{equation}
\end{itemize}
For $q \geq 1$, we denote by $\mathscr{U}^{q}$ the set of all growth functions of upper-type $q$ such that the function 
$t\mapsto \frac{\Phi(t)}{t}$ is non decreasing on $\mathbb{R}_{+}^{*}= \mathbb{R}_{+}\backslash\{0\}$,  and we put $\mathscr{U}:=\bigcup_{q\geq 1}\mathscr{U}^{q}.$
It is obvious that any growth function  $\Phi \in \mathscr{U}$ satisfies the  $\Delta_{2}-$condition. Moreover, any element in $\mathscr{U}$ is a homeomorphism of $\mathbb{R}_{+}$ onto $\mathbb{R}_{+}$. It is obvious that 
the function $t \mapsto \frac{t^{q}}{\ln(e+t)}$ belongs to $\mathscr{U}^{q}$ for  $q\geq 1$.

We say that two growth functions $\Phi_{1}$ and $\Phi_{2}$ are equivalent, if there exists a constant $c > 0$ such that

\begin{equation}\label{eq:equivalent}
c^{-1}\Phi_{1}(c^{-1}t) \leq \Phi_{2}(t)\leq c\Phi_{1}(ct), ~~ \forall~ t > 0.\end{equation} 
We will assume in the sequel that  any element of $\mathscr{U}$ is  
convex and  belongs to $\mathscr{C}^{1}(\mathbb{R}_{+})$ (see for example \cite{jmtafeuseh, djesehb, djesehaqb, sehbaedgc}).

Let $\Phi$ be a growth function. We note  $H^{\Phi}(\mathbb{R})$  the set of functions $f\in L^{\Phi}(\mathbb{R})$ such that
$$  \int_{\mathbb{R}}\frac{f(t)}{t-\overline{z}}dt=0, ~~\forall~z \in \mathbb{C}_{+},   $$
and  $H^{\Phi}(\mathbb{C_{+}})$  the subspace of  $h^{\Phi}(\mathbb{C_{+}})$ consisting of analytic functions on $\mathbb{C_{+}}$.\\
If  $\Phi$ is convex and $\Phi(t)> 0$ for all  $t> 0$ (resp.  $\Phi \in \mathscr{U}$) then  $\left(H^{\Phi}(\mathbb{C_{+}}), \|.\|_{H^{\Phi}}^{lux}\right)$ (resp.  $\left(H^{\Phi}(\mathbb{R}), \|.\|_{L^{\Phi}}^{lux}\right)$) is a closed subspace of  $\left(h^{\Phi}(\mathbb{C_{+}}), \|.\|_{h^{\Phi}}^{lux}\right)$ (resp.  $\left(L^{\Phi}(\mathbb{R}), \|.\|_{L^{\Phi}}^{lux}\right)$), where  $\|.\|_{H^{\Phi}}^{lux}$ is the restriction of $\|.\|_{h^{\Phi}}^{lux}$ on $H^{\Phi}(\mathbb{C_{+}})$.

\begin{thm}\label{pro:main11aqa6}
Let  $\Phi \in \mathscr{U}$ and $F$ be an analytic function on $\mathbb{C}_{+}$. The following assertions are equivalent:
\begin{itemize}
\item[(i)]  $F\in H^{\Phi}(\mathbb{C}_{+})$.
\item[(ii)] There exists   $f \in H^{\Phi}(\mathbb{R})$ such that 
$$  F(x+iy)=\frac{1}{\pi}\int_{\mathbb{R}}\frac{y}{(x-t)^{2}+y^{2}}f(t)dt, ~~\forall~x+iy \in \mathbb{C}_{+}.
    $$
\end{itemize}
If so, $f$ is unique and we also have for all $z=x+iy \in \mathbb{C}_{+}$
$$ F(z)=\frac{i}{\pi}\int_{\mathbb{R}}\frac{x-t}{(x-t)^{2}+y^{2}}f(t)dt
=\frac{1}{2\pi i}\int_{\mathbb{R}}\frac{f(t)}{t-z}dt.
  $$
Moreover, 
$$  \lim_{y \to 0}\|F(.+iy)-f\|_{L^{\Phi}}^{lux}=0   $$
and
$$  \|F\|_{H^{\Phi}}^{lux}=\lim_{y \to 0}\|F(.+iy)\|_{L^{\Phi}}^{lux}=\| f\|_{L^{\Phi}}^{lux}.        $$
\end{thm}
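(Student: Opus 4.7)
The plan is to reduce the Cauchy representation to the Poisson representation (Theorem \ref{pro:main11a6}) via the algebraic identities
\[
\frac{1}{\pi}\,\frac{y}{(x-t)^{2}+y^{2}} = \frac{1}{2\pi i}\left(\frac{1}{t-z}-\frac{1}{t-\bar z}\right), \qquad \frac{i}{\pi}\,\frac{x-t}{(x-t)^{2}+y^{2}} = \frac{1}{2\pi i}\left(\frac{1}{t-z}+\frac{1}{t-\bar z}\right),
\]
valid for $z=x+iy\in\mathbb{C}_{+}$, which tie the Poisson and conjugate Poisson kernels to the Cauchy kernel.

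For the implication (ii) $\Rightarrow$ (i), given $f\in H^{\Phi}(\mathbb{R})$ I would apply Theorem \ref{pro:main11a6} to obtain $F\in h^{\Phi}(\mathbb{C}_{+})$ equal to the Poisson integral of $f$ with $\|F\|_{h^{\Phi}}^{lux}=\|f\|_{L^{\Phi}}^{lux}$; the defining property $\int_{\mathbb{R}} f(t)/(t-\bar z)\,dt = 0$ together with the first identity above recasts $F(z)$ as the Cauchy integral $\frac{1}{2\pi i}\int_{\mathbb{R}} f(t)/(t-z)\,dt$, which is analytic in $z\in\mathbb{C}_{+}$ by differentiation under the integral sign (justified by the Orlicz--H\"older inequality). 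The second identity delivers the conjugate Poisson formula stated in the theorem.

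For (i) $\Rightarrow$ (ii), Theorem \ref{pro:main11a6} produces the unique $f\in L^{\Phi}(\mathbb{R})$ realizing $F$ as its Poisson integral, so the whole task reduces to showing $\int_{\mathbb{R}} f(t)/(t-\bar z)\,dt = 0$ for every $z\in\mathbb{C}_{+}$. I would apply Cauchy's theorem to $w\mapsto F(w)/(w-\bar z)$, analytic on $\mathbb{C}_{+}$, over rectangles $[a_{n},b_{n}]\times[\varepsilon,M_{n}]$ in the upper half-plane; the uniform Luxemburg control $\|F(\cdot+iy)\|_{L^{\Phi}}^{lux}\leq \|F\|_{H^{\Phi}}^{lux}$ together with the Orlicz--H\"older inequality annihilates the two lateral pieces as $|a_{n}|,b_{n}\to\infty$ and the top piece as $M_{n}\to\infty$, leaving the bottom contribution $\int_{\mathbb{R}} F(t+i\varepsilon)/(t+i\varepsilon-\bar z)\,dt = 0$ for each $\varepsilon>0$. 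Passing $\varepsilon\to 0^{+}$ through the $L^{\Phi}$-norm convergence $F(\cdot+i\varepsilon)\to f$, available because $\Phi\in\mathscr{U}\subset\Delta_{2}$ makes the Poisson semigroup strongly continuous on $L^{\Phi}$, yields the desired vanishing. The three-term identity $\lim_{y\to 0}\|F(\cdot+iy)\|_{L^{\Phi}}^{lux} = \|f\|_{L^{\Phi}}^{lux} = \|F\|_{H^{\Phi}}^{lux}$ follows from the same norm convergence combined with the identification already furnished by Theorem \ref{pro:main11a6}.

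The delicate step is the contour estimate in (i) $\Rightarrow$ (ii): $L^{\Phi}(\mathbb{R})$ functions are not a priori integrable, so the side and top contributions cannot be discarded by a naive dominated argument. The remedy is to pair the uniform Luxemburg control of $F$ on horizontal lines with the decay of the Luxemburg norm of $t\mapsto 1/(t+iM-\bar z)$ as $M\to\infty$, together with the analogous decay on the vertical lines $t=a_{n},\,t=b_{n}$, all channeled through the Orlicz--H\"older inequality. Once this analytic subtlety is settled, the remaining ingredients---$\Delta_{2}$-based continuity of the Poisson semigroup and the already-established harmonic case---make the rest of the proof routine.
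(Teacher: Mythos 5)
Your reduction has a genuine gap at the first step of (i) $\Rightarrow$ (ii): you invoke Theorem \ref{pro:main11a6} to produce $f\in L^{\Phi}(\mathbb{R})$ with $F=U_{f}$, but that theorem is stated (and proved, via weak-$*$ compactness in $(M^{\Psi}(\mathbb{R}))^{*}=L^{\Phi}(\mathbb{R})$) only for $\Phi$ an $N$-function, whereas here $\Phi\in\mathscr{U}$. The class $\mathscr{U}$ contains $\Phi(t)=t$, i.e.\ the $H^{1}$ case, and for a merely harmonic $F\in h^{1}(\mathbb{C}_{+})$ the boundary object is a measure, not an $L^{1}$ function (Theorem \ref{pro:main11aaq6}); so the existence of an $L^{\Phi}$ density cannot come from the harmonic representation theorem and must use analyticity. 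This is exactly what the paper's Lemma \ref{pro:mainfaqepq5} supplies: it transfers $F$ to the disk by the Cayley transform, uses Corollary \ref{pro:main5Qpmpm5} to land in $H^{1}(\mathbb{D})$, and then invokes the F.\ and M.\ Riesz--type fact that $H^{1}(\mathbb{D})$ functions are Poisson integrals of their $L^{1}(\mathbb{T})$ boundary values; pulling back gives $F=U_{f}$ with $f\in L^{\Phi}(\mathbb{R})$. Without this (or an equivalent use of analyticity) your argument does not get started for all $\Phi\in\mathscr{U}$. The same over-reliance on Theorem \ref{pro:main11a6} appears in your (ii) $\Rightarrow$ (i), though there it is harmless, since the direction $f\mapsto U_{f}$ only needs Lemma \ref{pro:main 3aqqq2}. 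Once $F=U_{f}$ is secured, the rest of your outline (strong $L^{\Phi}$ convergence of $F(\cdot+iy)$ to $f$ from $\Delta_{2}$, the partial-fraction identities, the norm equalities) matches the paper.

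The second half of (i) $\Rightarrow$ (ii), namely $\int_{\mathbb{R}}f(t)/(t-\overline{z})\,dt=0$, is where you correctly locate the analytic difficulty, but you leave the decisive estimate unproved. The paper's Lemma \ref{pro:main5aqq5} integrates $F$ over a half-disk of radius $R$ centered at $i\beta$ and controls the arc via the pointwise bound $|F(x+iy)|\leq\Phi^{-1}\bigl(\tfrac{2}{\pi y}\bigr)\|F\|_{h^{\Phi}}^{lux}$ of Proposition \ref{pro:main6mq4}, then integrates $\Phi^{-1}\bigl(\tfrac{2}{\pi R\sin\theta}\bigr)$ over the arc using that $\Phi^{-1}$ has lower type $1/q$, obtaining a bound of order $R^{-1/q}\ln R\to 0$. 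In your rectangle version the analogous top-edge term is controlled by $\|F\|_{H^{\Phi}}^{lux}$ times the Luxemburg norm $\|1/(\cdot+i(M+y))\|_{L^{\Psi}}^{lux}$, and you need this norm to tend to $0$ as $M\to\infty$, not merely to stay bounded; that decay again requires quantitative type information on $\Phi$ or $\Psi$ and is precisely the point you assert without proof. So the proposal has the right skeleton but omits the two load-bearing ingredients: the passage through $H^{1}(\mathbb{D})$ for the existence of the boundary function, and the lower-type estimate that closes the contour argument.
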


If  $\Phi(t)=t^{p}$ with $1\leq p< \infty$ then Theorem \ref{pro:main11aqa6} corresponds to \cite[Theorem 13.2]{javadmas}. If  $\Phi \in \mathscr{U}$ then the Cauchy integral is an isometric isomorphism between $H^{\Phi}(\mathbb{C}_{+})$ and $H^{\Phi}(\mathbb{R})$. 

We presented in the above results the theorems of representation in Hardy-Orlicz spaces. In the rest of our presentation, we state some applications of these theorems in certain theories related to Hardy spaces.

Let $\Phi$ be a convex growth function.  The complementary function of $\Phi$ is the function $\Psi$ defined by
$$ \Psi(s)=\sup_{t\geq 0}\{st-\Phi(t) \}, ~ \forall~  s \geq 0.       $$
Let $\Phi$ be a convex growth function and $\Psi$ its complementary. In general, $\Psi$ is not a growth function. But, if $\Phi$ is an $N-$function then $\Psi$ is also an $N-$function and the complementary of $\Psi$ is $\Phi$ (see for example \cite{shuchen, pkmlusg, raoren, rao68ren}).

Let $\Phi$ be an $N-$function. We say that $\Phi$ satisfies $\nabla_{2}-$condition (or $\Phi \in \nabla_{2}$)  if $\Phi$ and its complementary function both satisfy $\Delta_{2}-$condition.\\
Let $\alpha > 1$ and $\beta> 0$. The function $t\mapsto t^{\alpha}[\ln(1+t)]^{\beta}$ is an example of an $N-$function satisfying the $\nabla_{2}-$condition.

In the sequel, we will call complementary pair of $N-$functions the pair $(\Phi, \Psi)$ in which $\Phi$ is the  complementary function of $\Psi$ and vice versa.
We will also assume that the N-functions are of class $\mathscr{C}^{1}(\mathbb{R}_{+})$.

Let $f$ be a measurable function on $\mathbb{R}$. The Hilbert transform $\mathcal{H}(f)$ and the Cauchy integral  $S(f)$  of $f$ are respectively defined by
$$   \mathcal{H}(f)(x):=\lim_{\varepsilon \to 0}\frac{1}{\pi}\int_{|x-t|>\varepsilon}\frac{f(t)}{x-t}dt , ~~\forall~ x \in \mathbb{R}   $$
and 
$$  S(f)(z)= \frac{1}{i\pi }\int_{\mathbb{R}}\frac{f(t)}{t-z}dt,~~ \forall~z \in \mathbb{C_{+}}. 
  $$

\begin{thm}\label{pro:main11pal6}
Let $\Phi$ be an $N-$function and  $f \in L^{\Phi}(\mathbb{R})$ with real values. If $\Phi \in\nabla_{2}$ then  the following equivalent assertions are satisfied. 
\begin{itemize}
\item[(i)] $S(f) \in H^{\Phi}(\mathbb{C}_{+})$, 
\item[(ii)] $(f+i \mathcal{H}(f)) \in H^{\Phi}(\mathbb{R})$.
\end{itemize}
Moreover, there is a constant $C:=C_{\Phi}>0$ such that
\begin{equation}\label{eq:5qaq656}
\| f\|_{L^{\Phi}}^{lux}\leq\|S(f)\|_{H^{\Phi}}^{lux}\leq C\| f\|_{L^{\Phi}}^{lux}.    \end{equation}
\end{thm}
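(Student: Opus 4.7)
The plan is to reduce everything to the representation theorems already at our disposal (Theorems~\ref{pro:main11a6} and \ref{pro:main11aqa6}), using as the main analytic input the boundedness of the Hilbert transform on $L^{\Phi}(\mathbb{R})$ that the $\nabla_{2}$ hypothesis furnishes. First I would note that, since $\Phi$ is an $N$-function in $\nabla_{2}$, both $\Phi$ and its complementary $\Psi$ are in $\Delta_{2}$; the $\Delta_{2}$ condition on $\Psi$ is standardly equivalent to $\Phi$ being of upper type $q$ for some $q>1$, and together with convexity and $\Phi(t)/t$ nondecreasing this gives $\Phi\in\mathscr{U}^{q}$, which legitimizes invoking Theorem~\ref{pro:main11aqa6}. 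The $\nabla_{2}$ condition also implies (classical Orlicz-space version of M.~Riesz) that the Hilbert transform $\mathcal{H}\colon L^{\Phi}(\mathbb{R})\to L^{\Phi}(\mathbb{R})$ is bounded: $\|\mathcal{H}(f)\|_{L^{\Phi}}^{lux}\le C_{\Phi}\|f\|_{L^{\Phi}}^{lux}$.

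With these tools in hand I would set $g:=f+i\mathcal{H}(f)\in L^{\Phi}(\mathbb{R})$ and do the key algebraic computation: writing $\frac{1}{i\pi(t-z)}$ for $z=x+iy$ as its real and imaginary parts yields
\begin{equation*}
S(f)(x+iy)=\frac{1}{\pi}\!\int_{\mathbb{R}}\!\frac{y\,f(t)}{(x-t)^{2}+y^{2}}\,dt+\frac{i}{\pi}\!\int_{\mathbb{R}}\!\frac{(x-t)f(t)}{(x-t)^{2}+y^{2}}\,dt,
\end{equation*}
i.e.\ $S(f)(x+iy)=P_{y}\!\ast\!f(x)+iQ_{y}\!\ast\!f(x)$. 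Combining this with the classical identity $Q_{y}\!\ast\!f=P_{y}\!\ast\!\mathcal{H}(f)$ (which follows by computing the Poisson extension of $\mathcal{H}(f)$ through Fubini and the semigroup property of the Poisson kernel, or by the Fourier-multiplier point of view) gives the crucial identification
\begin{equation*}
S(f)(x+iy)=P_{y}\!\ast\!g(x),\qquad x+iy\in\mathbb{C}_{+}.
\end{equation*}
Since $g\in L^{\Phi}(\mathbb{R})$, Theorem~\ref{pro:main11a6} applied componentwise yields $S(f)\in h^{\Phi}(\mathbb{C}_{+})$ with $\|S(f)\|_{h^{\Phi}}^{lux}=\|g\|_{L^{\Phi}}^{lux}$; but $S(f)$ is analytic on $\mathbb{C}_{+}$, so in fact $S(f)\in H^{\Phi}(\mathbb{C}_{+})$, proving (i).

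For (ii) I would either invoke Theorem~\ref{pro:main11aqa6} on $S(f)\in H^{\Phi}(\mathbb{C}_{+})$, whose boundary value is precisely $g$, and read off $g\in H^{\Phi}(\mathbb{R})$; or check the defining cancellation $\int_{\mathbb{R}}\frac{g(t)}{t-\overline{z}}\,dt=0$ directly using a contour argument, since for $z\in\mathbb{C}_{+}$ the kernel $w\mapsto(w-\overline{z})^{-1}$ is analytic on $\mathbb{C}_{+}$ and the integral over a large upper semicircle tends to $0$ by $H^{\Phi}$-decay of $S(f)$. The equivalence of (i) and (ii) then follows because, in either direction, one knows $\mathcal{H}(f)\in L^{\Phi}$ (from $\nabla_{2}$ in the forward direction, from $\mathrm{Im}\,g\in L^{\Phi}$ in the reverse).

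Finally, for the norm inequalities \eqref{eq:5qaq656}: from $|g|^{2}=f^{2}+\mathcal{H}(f)^{2}\ge f^{2}$ and monotonicity of $\Phi$ we get $\|f\|_{L^{\Phi}}^{lux}\le\|g\|_{L^{\Phi}}^{lux}=\|S(f)\|_{H^{\Phi}}^{lux}$, while the triangle inequality plus boundedness of $\mathcal{H}$ on $L^{\Phi}$ gives $\|g\|_{L^{\Phi}}^{lux}\le\|f\|_{L^{\Phi}}^{lux}+\|\mathcal{H}(f)\|_{L^{\Phi}}^{lux}\le(1+C_{\Phi})\|f\|_{L^{\Phi}}^{lux}$. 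The main obstacle is really the quoted $L^{\Phi}$-boundedness of $\mathcal{H}$ under $\nabla_{2}$ together with the kernel identity $Q_{y}\!\ast\!f=P_{y}\!\ast\!\mathcal{H}(f)$; once both are secured, everything else is an application of the representation theorems already established.
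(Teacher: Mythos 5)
Your argument is correct and follows essentially the same route as the paper: the decomposition $S(f)=U_{f}+iV_{f}$ together with the identity $V_{f}=U_{\mathcal{H}(f)}$ (your ``classical identity'' $Q_{y}\ast f=P_{y}\ast\mathcal{H}(f)$ is exactly Theorem~\ref{pro:main11pqa6}, which the paper proves by density from the $L^{p}$ case), the $L^{\Phi}$-boundedness of $\mathcal{H}$ under $\nabla_{2}$, and the representation Theorems~\ref{pro:main11a6} and~\ref{pro:main11aqa6}. One harmless slip: $\Psi\in\Delta_{2}$ corresponds to a lower type $>1$ for $\Phi$ rather than an upper type, but the needed membership $\Phi\in\mathscr{U}$ already follows from $\Phi\in\Delta_{2}$, which is part of the paper's definition of $\nabla_{2}$.
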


Let $f$ be a measurable function on $\mathbb{R}$. The Hilbert maximal function $\mathcal{\widetilde{H}}(f)$ of a function $f$ is defined by
$$  \mathcal{\widetilde{H}}(f)(x):=\sup_{\varepsilon>0}\left|\frac{1}{\pi}\int_{|x-t|>\varepsilon}\frac{f(t)}{x-t}dt \right| , ~~\forall~ x \in \mathbb{R}.   $$

Let $\alpha>0$ and $F$ be a harmonic function on $\mathbb{C_{+}}$. The radial maximal  function $\mathcal{M}_{rad}(F)$ and the non-tangential  maximal  function  $\mathcal{M}_{ntg}^{\alpha}(F)$ of  $F$ are respectively defined by 
$$ \mathcal{M}_{rad}(F)(t):=\sup_{y>0}|F(t+iy)|, ~~\forall~ t \in \mathbb{R}   $$
and 
$$ \mathcal{M}_{ntg}^{\alpha}(F)(t):=\sup_{z\in \Gamma_{\alpha}(t)}|F(z)|, ~~\forall~ t \in \mathbb{R},   $$
where  $ \Gamma_{\alpha}(t) := \left\{ x+iy \in \mathbb{C_{+}}: | x-t | < \alpha y  \right\}.  $ 

\begin{thm}\label{pro:main 5dmw3pl}
  Let $\Phi$ be an  N-function such that $\Phi \in \Delta_{2}$. The following assertions are equivalent.
\begin{itemize}
 \item[(i)] $\Phi\in \nabla_{2}$.
 \item[(ii)] There exists a constant $C_{1}>0$ such that for all $F\in h^{\Phi}(\mathbb{C}_{+})$,   
   \begin{equation}\label{eq:cjlis}
   \|F\|_{h^{\Phi}}^{lux}\leq\|\mathcal{M}_{rad}(F)\|_{L^{\Phi}}^{lux}\leq\|\mathcal{M}_{ntg}^{\alpha}(F)\|_{L^{\Phi}}^{lux} \leq C\|F\|_{h^{\Phi}}^{lux}.
   \end{equation}
 \item[(iii)] There exists a constant $C_{2}>0$ such that for all $f\in L^{\Phi}(\mathbb{R})$,   
  \begin{equation}\label{eq:cotjplaqlis}
 \|f\|_{L^{\Phi}}^{lux}\leq \|\mathcal{H}(f)\|_{L^{\Phi}}^{lux} \leq \|\mathcal{\widetilde{H}}(f)\|_{L^{\Phi}}^{lux} \leq C_{2}\|f\|_{L^{\Phi}}^{lux}.
  \end{equation}
 \end{itemize}
 \end{thm}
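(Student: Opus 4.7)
The plan is to prove the equivalence by establishing (i) $\iff$ (ii) and (i) $\iff$ (iii) separately. In each case I would use the Poisson representation (Theorem \ref{pro:main11a6}) to transfer the harmonic/maximal question to a purely real-variable question on $L^{\Phi}(\mathbb{R})$, and then invoke the classical Orlicz-space characterizations: under $\Phi\in\Delta_{2}$, both the Hardy--Littlewood maximal operator $M$ and the Hilbert transform $\mathcal{H}$ are bounded on $L^{\Phi}(\mathbb{R})$ if and only if $\Phi\in\nabla_{2}$.

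\textbf{Step 1: (i) $\Rightarrow$ (ii).} For $F\in h^{\Phi}(\mathbb{C}_{+})$, Theorem \ref{pro:main11a6} produces a unique $f\in L^{\Phi}(\mathbb{R})$ with $F(x+iy)=(P_{y}\ast f)(x)$ and $\|F\|_{h^{\Phi}}^{lux}=\|f\|_{L^{\Phi}}^{lux}$. The standard pointwise bound $\mathcal{M}_{ntg}^{\alpha}(F)(t)\leq C_{\alpha}M(|f|)(t)$ combined with the $L^{\Phi}$-boundedness of $M$ (granted by $\Phi\in\nabla_{2}$) delivers the right-most inequality of (\ref{eq:cjlis}). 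The left-most inequality $\|F\|_{h^{\Phi}}^{lux}\leq\|\mathcal{M}_{rad}(F)\|_{L^{\Phi}}^{lux}$ is immediate from $|F(t+iy)|\leq\mathcal{M}_{rad}(F)(t)$, and $\mathcal{M}_{rad}(F)\leq\mathcal{M}_{ntg}^{\alpha}(F)$ is trivial.

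\textbf{Step 2: (ii) $\Rightarrow$ (i).} Given $f\in L^{\Phi}(\mathbb{R})$, apply the hypothesis to $G(x+iy):=(P_{y}\ast|f|)(x)$, which lies in $h^{\Phi}(\mathbb{C}_{+})$ with $\|G\|_{h^{\Phi}}^{lux}=\|f\|_{L^{\Phi}}^{lux}$ by Theorem \ref{pro:main11a6}. From $P_{y}(x-s)\geq(2\pi y)^{-1}$ for $|x-s|\leq y$ and the positivity of $|f|$ one obtains the pointwise lower bound $M(f)(t)\leq c\,\mathcal{M}_{rad}(G)(t)$, whence
$$\|M(f)\|_{L^{\Phi}}^{lux}\leq c\,\|\mathcal{M}_{rad}(G)\|_{L^{\Phi}}^{lux}\leq cC\,\|G\|_{h^{\Phi}}^{lux}=cC\,\|f\|_{L^{\Phi}}^{lux}.$$
Hence $M$ is bounded on $L^{\Phi}(\mathbb{R})$, which by the classical characterization forces $\Phi\in\nabla_{2}$.

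\textbf{Step 3: (i) $\Leftrightarrow$ (iii).} Under $\Phi\in\Delta_{2}$, the classical Gallardo--Kita theorem gives that $\mathcal{H}:L^{\Phi}(\mathbb{R})\to L^{\Phi}(\mathbb{R})$ is bounded iff $\Phi\in\nabla_{2}$. The upper bound on $\widetilde{\mathcal{H}}(f)$ in (\ref{eq:cotjplaqlis}) follows from the Cotlar-type estimate $\widetilde{\mathcal{H}}(f)\lsim M(\mathcal{H}(f))+M(f)$ combined with the $L^{\Phi}$-boundedness of $M$; the pointwise inequality $|\mathcal{H}(f)|\leq\widetilde{\mathcal{H}}(f)$ is trivial; and the lower bound $\|f\|_{L^{\Phi}}^{lux}\leq\|\mathcal{H}(f)\|_{L^{\Phi}}^{lux}$ comes from $\mathcal{H}^{2}=-\mathrm{Id}$, valid on $L^{2}\cap L^{\Phi}$ and extended by density using $\Phi\in\Delta_{2}$. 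Conversely, (iii) in particular makes $\mathcal{H}$ bounded on $L^{\Phi}$, hence $\Phi\in\nabla_{2}$.

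\textbf{Main obstacle.} The substantive inputs are the two classical Orlicz-space equivalences for $M$ and for $\mathcal{H}$, which must be quoted from the Orlicz literature. The step that genuinely couples to the new representation theorem is (ii) $\Rightarrow$ (i): one cannot dominate $M(f)$ by $\mathcal{M}_{ntg}^{\alpha}(P_{y}\ast f)$ for signed $f$, so the trick of passing first to the positive Poisson integral of $|f|$ is essential before the hypothesis can be applied.
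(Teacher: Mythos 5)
Your proposal is correct, and its overall architecture coincides with the paper's: both reduce (ii) and (iii), via the Poisson representation of Theorem \ref{pro:main11a6}, to statements about $\mathcal{M}_{HL}$, $\mathcal{H}$ and $\widetilde{\mathcal{H}}$ on $L^{\Phi}(\mathbb{R})$; both use the pointwise comparison between $\mathcal{M}_{rad}(U_f)$, $\mathcal{M}_{ntg}^{\alpha}(U_f)$ and $\mathcal{M}_{HL}(f)$, obtain the lower Hilbert bound from $\mathcal{H}\circ\mathcal{H}=-\mathrm{Id}$ (Proposition \ref{pro:main 5apqaq8qllm} in the paper, an $L^{2}$-density argument in yours), and control $\widetilde{\mathcal{H}}$ by a Cotlar-type estimate. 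The genuine divergence is in the necessity directions: where you quote as black boxes the classical facts that, under $\Delta_{2}$, boundedness of $\mathcal{M}_{HL}$ (resp.\ of $\mathcal{H}$) on $L^{\Phi}$ forces $\Phi\in\nabla_{2}$, the paper proves both from scratch --- Lemma \ref{pro:mainplaaqq6} builds an explicit extremal function $f=\sum_k 12\cdot 2^{k}t_{k}\chi_{I_{k}}$ defeating $\mathcal{M}_{HL}$ when the Dini-type condition fails, and the implication (iv)$\Rightarrow$(i) of Theorem \ref{pro:main 5dmpmw3pl} runs a dyadic Calder\'on--Zygmund argument showing that a bound on $\widetilde{\mathcal{H}}$ yields a modular bound on $\mathcal{M}^{\mathcal{D}^{\beta}}_{HL}$. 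Your route is shorter but transfers the real work to the cited literature; the paper's is self-contained. On one point you are more careful than the paper: the printed relation (\ref{eq:claqis}) asserts $\tfrac{1}{2\pi}\mathcal{M}_{HL}(f)\leq \mathcal{M}_{rad}(U_{f})$ for general $f$, which fails for signed $f$ (take $f$ odd about the point considered, so that $P_{y}\star f$ vanishes there), and your device of applying hypothesis (ii) to $U_{|f|}$ is exactly the needed repair. Finally, note that both your argument and the paper's only produce $\|f\|_{L^{\Phi}}^{lux}\leq c_{\Phi}^{-1}\|\mathcal{H}(f)\|_{L^{\Phi}}^{lux}$ with a constant, not the constant $1$ displayed in (\ref{eq:cotjplaqlis}); that is an imprecision of the statement rather than a defect of your proof.
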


Let $(\Phi,\Psi)$ be a complementary pair of $N-$functions. If  both $\Phi$ and $\Psi$  satisfy  $\Delta_2-$condition  then the map 
$$  F\mapsto \|F\|_{h^{\Phi}}^{0}:= \sup\left\{  \lim_{y\to 0}\int_{\mathbb{R}}|F(x+iy)G(x+iy)|dx    :  G\in h^{\Psi}(\mathbb{C}_{+})\hspace*{0.15cm} \text{with}\hspace*{0.15cm} \|G\|_{h^{\Psi}}^{lux} \leq 1 \right\},
  $$
is a norm on  $h^{\Phi}(\mathbb{C}_{+})$. Moreover, 
\begin{equation}\label{eqwpn}
\|F\|_{h^{\Phi}}^{lux}\leq \|F\|_{h^{\Phi}}^{0} \leq 2\|F\|_{h^{\Phi}}^{lux}, ~~ \forall~ F\in h^{\Phi}(\mathbb{C}_{+}).
\end{equation} 

\begin{thm}\label{pro:main 5aqk3pl}
Let $(\Phi,\Psi)$ be a complementary pair of $N-$functions. If  $\Phi \in  \nabla_{2}$ then the following assertions are satisfied.
\begin{itemize}
 \item[(i)] The topological dual of $h^{\Phi}(\mathbb{C}_{+})$, $\left(h^{\Phi}(\mathbb{C}_{+})\right)^{*}$ is isometrically isomorphic to $h^{\Psi}(\mathbb{C}_{+})$, in the sense that, for  $T\in \left(h^{\Phi}(\mathbb{C}_{+})\right)^{*}$ there is a unique $G\in h^{\Psi}(\mathbb{C}_{+})$ such that  
$$ T(F)= \lim_{y\to 0}\int_{\mathbb{R}}F(x+iy)G(x+iy)dx, ~~ \forall~F\in h^{\Phi}(\mathbb{C}_{+})
    $$
and 
$$  \|T\|_{(h^{\Phi})^{*}} = \|G\|_{h^{\Psi}}^{0}.     $$
 \item[(ii)]  The topological dual of $H^{\Phi}(\mathbb{C}_{+})$,   $\left(H^{\Phi}(\mathbb{C}_{+})\right)^{*}$    is isomorphic to $H^{\Psi}(\mathbb{C}_{+})$, in the sense that, for all  $T\in \left(H^{\Phi}(\mathbb{C}_{+})\right)^{*}$, there is a unique $G\in H^{\Psi}(\mathbb{C}_{+})$ such that
 $$   T(F)= \lim_{y\to 0}\int_{\mathbb{R}}F(x+iy)\overline{G(x+iy)}dx, ~~ \forall~F\in H^{\Phi}(\mathbb{C}_{+}). 
   $$
\end{itemize}
\end{thm}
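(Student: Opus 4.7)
The strategy is to reduce both parts to the classical Orlicz duality $(L^{\Phi}(\mathbb{R}))^{*}\cong L^{\Psi}(\mathbb{R})$ (valid under $\nabla_{2}$) via the boundary-value identifications of Theorems \ref{pro:main11a6} and \ref{pro:main11aqa6}. The condition $\Phi\in\nabla_{2}$ is symmetric in the complementary pair: it is exactly the assertion that both $\Phi$ and $\Psi$ lie in $\Delta_{2}$, so $\Psi\in\nabla_{2}$ too, and the Poisson and Cauchy representations apply equally to $h^{\Psi}$ and $H^{\Psi}$.

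For part (i), Theorem \ref{pro:main11a6} identifies $h^{\Phi}(\mathbb{C}_{+})\cong L^{\Phi}(\mathbb{R})$ isometrically via the boundary-value map $F\mapsto f$, and likewise $h^{\Psi}(\mathbb{C}_{+})\cong L^{\Psi}(\mathbb{R})$. Combined with the Orlicz duality (whose operator norm is the Orlicz norm $\|g\|_{L^{\Psi}}^{0}$), each $T\in (h^{\Phi}(\mathbb{C}_{+}))^{*}$ yields a unique $g\in L^{\Psi}(\mathbb{R})$ and thus a unique Poisson extension $G\in h^{\Psi}(\mathbb{C}_{+})$ with $T(F)=\int_{\mathbb{R}}fg\,dx$. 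The main step is to upgrade this boundary pairing to the announced interior limit. Because $\Phi,\Psi\in\Delta_{2}$, the prerequisites of Section III give norm convergence $F(\cdot+iy)\to f$ in $L^{\Phi}$ and $G(\cdot+iy)\to g$ in $L^{\Psi}$ as $y\to 0$. Splitting
\begin{multline*}
\int_{\mathbb{R}}F(x+iy)G(x+iy)\,dx-\int_{\mathbb{R}}fg\,dx\\
=\int_{\mathbb{R}}(F(\cdot+iy)-f)\,G(\cdot+iy)\,dx+\int_{\mathbb{R}}f\,(G(\cdot+iy)-g)\,dx
\end{multline*}
and applying H\"older's inequality for Orlicz spaces makes both terms vanish as $y\to 0$. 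The isometric identity $\|T\|_{(h^{\Phi})^{*}}=\|G\|_{h^{\Psi}}^{0}$ then follows from $\|T\|=\|g\|_{L^{\Psi}}^{0}=\|G\|_{h^{\Psi}}^{0}$, the last equality being the definition of $\|\cdot\|_{h^{\Psi}}^{0}$ together with \eqref{eqwpn}.

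For part (ii), Theorem \ref{pro:main11aqa6} realises $H^{\Phi}(\mathbb{C}_{+})\cong H^{\Phi}(\mathbb{R})$ as a closed subspace of $L^{\Phi}(\mathbb{R})$. A functional $T\in(H^{\Phi}(\mathbb{C}_{+}))^{*}$ extends by Hahn-Banach to $L^{\Phi}(\mathbb{R})$, and Orlicz duality supplies $h\in L^{\Psi}(\mathbb{R})$ with $T(f)=\int_{\mathbb{R}}f\,\overline{h}\,dx$. Since $\Psi\in\nabla_{2}$, Theorem \ref{pro:main11pal6} (with the roles of $\Phi$ and $\Psi$ exchanged) delivers the boundedness of the Riesz projection $P_{+}=\tfrac12(I+i\mathcal{H})$ on $L^{\Psi}(\mathbb{R})$; hence $h=P_{+}h+P_{-}h$ with $P_{+}h\in H^{\Psi}(\mathbb{R})$ and $P_{-}h$ annihilating $H^{\Phi}(\mathbb{R})$ by the defining vanishing condition. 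Thus $T(f)=\int_{\mathbb{R}}f\,\overline{P_{+}h}\,dx$, and the Cauchy representation delivers a unique $G\in H^{\Psi}(\mathbb{C}_{+})$ with boundary value $P_{+}h$. The interior-limit formula is then obtained exactly as in (i).

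The main obstacle I anticipate is the boundary $L^{\Phi}$-norm convergence $F(\cdot+iy)\to f$: this is the bridge between the abstract $L^{\Phi}/L^{\Psi}$-pairing and the claimed Hardy-Orlicz pairing, and it relies crucially on $\Phi\in\Delta_{2}$, since a.e.\ or weak-$*$ convergence alone does not suffice for passage to the limit in a H\"older product. A secondary subtlety in (ii) is the $L^{\Psi}$-boundedness of the Riesz projection, which is precisely why the symmetric hypothesis $\Phi\in\nabla_{2}$ is indispensable.
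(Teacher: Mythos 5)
Your part (i) is essentially the paper's argument: the authors also pass through $T_1(f):=T(U_f)$, invoke the Orlicz duality $(L^{\Phi})^{*}\cong L^{\Psi}$ (valid since $\nabla_2$ forces both $\Phi,\Psi\in\Delta_2$), and upgrade the boundary pairing to the interior limit exactly by the H\"older splitting you describe (this is their Proposition \ref{pro:main90aqaqmq2}, proved from Corollary \ref{pro:main 3v2qqakam}); the isometry is their Corollary \ref{pro:mainqpmqapm2}. Nothing to add there.

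For part (ii) you take a genuinely different, and more streamlined, route. The paper does not use the Riesz projection on $L^{\Psi}$: it decomposes $L$ as $L(F)=L_{\mathbb{R}}(F)-iL_{\mathbb{R}}(iF)$ with $L_{\mathbb{R}}=\mathrm{Re}\,L$, applies Hahn--Banach to the real-linear functional $f\mapsto L_{\mathbb{R}}(U_f+iU_{\mathcal{H}(f)})$ on real-valued $f\in L^{\Phi}$, and then reassembles $L(F)$ by hand using $\mathcal{H}^2=-I$ and the two identities $\int\mathcal{H}(f)\mathcal{H}(g)=\int fg$ and $\int\mathcal{H}(f)g=-\int f\mathcal{H}(g)$ (their Lemma \ref{pro:main 53ppaql}), finally exhibiting $G=U_{g/4}+iU_{\mathcal{H}(g/4)}$. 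Your version (complex Hahn--Banach on all of $L^{\Phi}$, then $h=P_{+}h+P_{-}h$ with $P_{+}=\tfrac12(I+i\mathcal{H})$ bounded by Theorem \ref{pro:main11pal6} and idempotent by $\mathcal{H}^2=-I$) avoids the real/imaginary bookkeeping and makes the structure of the proof clearer. The one step you should not wave through is the claim that $P_{-}h$ annihilates $H^{\Phi}(\mathbb{R})$ ``by the defining vanishing condition'': what is actually needed is that $\int_{\mathbb{R}} f\,\overline{P_{-}h}\,dx=0$ for $f\in H^{\Phi}(\mathbb{R})$, and since $\overline{P_{-}h}=P_{+}\overline{h}\in H^{\Psi}(\mathbb{R})$ this amounts to the orthogonality $\int_{\mathbb{R}}fk\,dx=0$ for $f\in H^{\Phi}(\mathbb{R})$, $k\in H^{\Psi}(\mathbb{R})$. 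That is not literally the condition $\int f(t)(t-\overline{z})^{-1}dt=0$; it is obtained (as in the paper's proof of Lemma \ref{pro:main 53ppaql}) by noting that $FK\in H^{1}(\mathbb{C}_{+})$ via H\"older and that boundary values of $H^{1}(\mathbb{C}_{+})$ functions have vanishing integral. With that lemma supplied, your argument closes, and the final interior-limit formula and uniqueness follow as in (i).
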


\section{Some definitions and useful properties}

We present in this section some useful results needed in our presentation.

\subsection{Some properties of Orlicz space on $\mathbb{R}$.} 

Let  $\Phi \in \mathscr{C}^{1}(\mathbb{R}_{+})$  a growth function. The lower and the upper indices of $\Phi$ are respectively defined by
$$ a_\Phi:=\inf_{t>0}\frac{t\Phi'(t)}{\Phi(t)}
  \hspace*{1cm}\textrm{and} \hspace*{1cm} b_\Phi:=\sup_{t>0}\frac{t\Phi'(t)}{\Phi(t)}.          $$
Let $\Phi \in \mathscr{C}^{1}(\mathbb{R}_{+})$ a growth function. The following assertions are satisfied.
\begin{itemize}
\item[(i)] If $\Phi$ is convex then $\Phi \in \Delta_{2}$ if and only if  $\Phi \in \mathscr{U}$      if and only if  $1\leq a_\Phi\leq b_\Phi <\infty$ (see. \cite[Page 171]{djesehb}).
\item[(ii)] If  $\Phi$ is an $N-$function  then  $\Phi \in \nabla_{2}$ if and only if  $1< a_\Phi\leq b_\Phi <\infty$ (see. \cite[Lemma 2.1]{denhuan}). Moreover, if  $\Phi \in \mathscr{U}$ then  $\Phi \in \nabla_{2}$ if and only if there is a constant  $C:=C_{\Phi}>0$ such that for all $t > 0$,
\begin{equation}\label{eq:saaui8n}
\int_{0}^{t}\frac{\Phi(s)}{s^{2}}ds \leq C \frac{\Phi(t)}{t},
\end{equation}
(see. \cite[Lemma 3.1]{djesehb}).
\item[(iii)] If  $0< a_\Phi\leq b_\Phi <\infty$ then the function  $t\mapsto \frac{\Phi(t)}{t^{a_\Phi}}$ is increasing on $\mathbb{R}_{+}^{*}$ while the function  $t\mapsto \frac{\Phi(t)}{t^{b_\Phi}}$ is decreasing on $\mathbb{R}_{+}^{*}$ (see. \cite[Lemma 2.1]{sehbaedgc}).
\end{itemize}

Let  $p>0$ and $\Phi$ be a growth function. We say that $\Phi$ is of lower-type $p$ if there exists a constant
$C_{p}> 0$ such that
\begin{equation}\label{eq:saui8n}
\Phi(st)\leq C_{p}t^{p}\Phi(s),~~\forall~s>0, ~\forall~0< t\leq 1.\end{equation}
For $0< p\leq 1$, we denote by $\mathscr{L}^{p}$ the set of all growth functions of lower-type $p$  such that the function 
$t\mapsto \frac{\Phi(t)}{t}$ is non-increasing on $\mathbb{R}_{+}^{*}$, and  we put $\mathscr{L}:=\bigcup_{0< p\leq 1}\mathscr{L}^{p}.$
If $\Phi$ is a one-to-one growth function then $\Phi \in  \mathscr{U}^{q}$ if and only if $\Phi^{-1} \in  \mathscr{L}^{1/q}$ (see. \cite[Proposition 2.1]{sehbatchoundja1}).

Let $\Phi$ be a  growth function.  The Morse$-$Transue space  $M^{\Phi}(\mathbb{R})$ is the set of all equivalent
classes (in the usual sense) of measurable functions $f : \mathbb{R} \longrightarrow \mathbb{C}$ such that
$$  \int_{\mathbb{R}}\Phi\left(\frac{|f(x)|}{\lambda}\right)dx < \infty, ~~\text{for all}~ \lambda>0   .        $$

Most of the results below can be found in the following references (\cite{shuchen},\cite{pkmlusg},\cite{raoren}).

Let $\Phi$ be a convex growth function. The following assertions are satisfied.

\begin{itemize}
  \item[(i)] $(M^{\Phi}(\mathbb{R}),  \|.\|_{L^{\Phi}}^{lux})$ is a closed subspace of  $(L^{\Phi}(\mathbb{R}),  \|.\|_{L^{\Phi}}^{lux})$.
  \item[(ii)] If $\Phi(t)>0$,  for all $t>0$ then $M^{\Phi}(\mathbb{R})$ is separable.
 \item[(iii)] If  $\Phi \in \Delta_{2}$ then $M^{\Phi}(\mathbb{R})=L^{\Phi}(\mathbb{R})$.
\end{itemize}

 The following result can be found for example in \cite{bansahsehba}.
 
\begin{lem}\label{pro:main26}
 Let $\alpha$ be a real. Then for $y > 0$ fixed, the integral 
$$ \textit{J}_{\alpha}(y) =\int_{\mathbb{R}}\dfrac{dx}{|x+iy|^{\alpha}},    $$ 
converges if and only if $\alpha > 1$. In this case,
$$ \textit{J}_{\alpha}(y)=y^{1-\alpha}\int_{0}^{\infty}\dfrac{du}{u^{1/2}(1+u)^{\alpha/2}}.     $$ 
 \end{lem}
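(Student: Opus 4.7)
The plan is to reduce the integral to a real one-variable problem and handle it by a scaling substitution. First I would observe that $|x+iy| = \sqrt{x^{2}+y^{2}}$, so that
$$ J_{\alpha}(y) = \int_{\mathbb{R}} \frac{dx}{(x^{2}+y^{2})^{\alpha/2}}. $$
Since $y>0$ is fixed, the integrand is continuous and bounded on every compact subset of $\mathbb{R}$, so convergence is entirely a question at $\pm\infty$. As $|x|\to\infty$, the integrand is asymptotic to $|x|^{-\alpha}$, and $\int^{\infty} |x|^{-\alpha}\,dx$ converges iff $\alpha > 1$. This proves the convergence statement.

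For the explicit formula, assume $\alpha > 1$. The integrand is even in $x$, so
$$ J_{\alpha}(y) = 2\int_{0}^{\infty} \frac{dx}{(x^{2}+y^{2})^{\alpha/2}}. $$
Now I would make the substitution $x = y\sqrt{u}$, $u\in(0,\infty)$, which gives $dx = \frac{y}{2\sqrt{u}}\,du$ and $x^{2}+y^{2} = y^{2}(1+u)$, so $(x^{2}+y^{2})^{\alpha/2} = y^{\alpha}(1+u)^{\alpha/2}$. Substituting,
$$ J_{\alpha}(y) = 2\int_{0}^{\infty} \frac{1}{y^{\alpha}(1+u)^{\alpha/2}}\cdot\frac{y}{2\sqrt{u}}\,du = y^{1-\alpha}\int_{0}^{\infty}\frac{du}{u^{1/2}(1+u)^{\alpha/2}}, $$
which is the claimed identity. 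As a sanity check, the $u$-integral converges: near $u=0$ the integrand behaves like $u^{-1/2}$ (integrable), and near $u=\infty$ like $u^{-(1+\alpha)/2}$, which is integrable precisely when $\alpha>1$, consistent with the direct analysis of $J_{\alpha}(y)$.

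There is no substantive obstacle here; the only thing to be careful about is choosing a substitution that simultaneously clears the $y$-dependence and produces the target integrand. The scaling $x=y\sqrt{u}$ is the natural choice because it both linearises $x^{2}+y^{2}$ in $u$ and absorbs all of the $y$-dependence into a single prefactor $y^{1-\alpha}$.
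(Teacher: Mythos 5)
Your proof is correct and is the natural argument; the paper itself omits the proof of this lemma (it only cites a reference), and the stated form of the answer, with integrand $u^{-1/2}(1+u)^{-\alpha/2}$, is exactly what the substitution $x=y\sqrt{u}$ produces, so you have reconstructed the intended computation. The convergence discussion (no local singularity since $y>0$, behaviour $|x|^{-\alpha}$ at infinity, hence convergence iff $\alpha>1$) is also complete as stated.
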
 

For $y>0$, the Poisson kernel $P_{y}$ and its conjugate   $Q_{y}$ are the functions defined respectively by
$$   P_y(x)=\dfrac{1}{\pi}\dfrac{y}{x^{2}+y^{2}}, ~\forall~ x \in \mathbb{R},    $$
and
$$   Q_y(x)=\dfrac{1}{\pi}\dfrac{x}{x^{2}+y^{2}}, ~\forall~ x \in \mathbb{R}.    $$

 The following result is an immediate consequence of Lemma \ref{pro:main26}. Therefore, the proof will be omitted.

\begin{prop}\label{pro:main70aq6}
Let   $y >0$ and $\Phi$ a convex growth function. The following assertions are satisfied.
\begin{itemize}
\item[(i)]  $P_y \in M^{\Phi}(\mathbb{R})$.
\item[(ii)] If  $\Phi$ is  $\mathscr{C}^{1}(\mathbb{R}_{+})$ and  $1< a_\Phi\leq b_\Phi <\infty$ then   $Q_y \in L^{\Phi}(\mathbb{R})$.
\end{itemize}
\end{prop}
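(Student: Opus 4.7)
\textbf{Proof plan for Proposition \ref{pro:main70aq6}.}

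For part (i), the plan is to show directly that $\int_{\mathbb{R}}\Phi(|P_y(x)|/\lambda)\,dx<\infty$ for every $\lambda>0$, which is the definition of $P_y\in M^{\Phi}(\mathbb{R})$. Fix $\lambda>0$. Since $P_y(x)\leq \frac{1}{\pi y}$ and $P_y(x)\to 0$ as $|x|\to\infty$, choose $R>0$ so large that $P_y(x)/\lambda\leq 1$ whenever $|x|>R$. Split the integral into the contributions from $\{|x|\leq R\}$ and $\{|x|>R\}$. On the compact interval $[-R,R]$, $\Phi(P_y(x)/\lambda)\leq \Phi(1/(\pi y\lambda))$, which is a finite constant, hence integrable. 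On the outer region, the convexity of $\Phi$ together with $\Phi(0)=0$ gives the slope-type estimate $\Phi(t)\leq t\,\Phi(1)$ for $t\in[0,1]$, so that
$$\int_{|x|>R}\Phi\!\left(\tfrac{P_y(x)}{\lambda}\right)dx\leq \tfrac{\Phi(1)}{\lambda}\int_{\mathbb{R}}P_y(x)\,dx = \tfrac{\Phi(1)}{\lambda}\cdot\tfrac{y}{\pi}J_{2}(y),$$
which is finite by Lemma \ref{pro:main26} (with $\alpha=2>1$). This proves (i).

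For part (ii), the plan is to produce a single $\lambda>0$ with $\int_{\mathbb{R}}\Phi(|Q_y(x)|/\lambda)\,dx\leq 1$. Two elementary facts about $Q_y$ will be used: the global bound $|Q_y(x)|\leq \frac{1}{2\pi y}$ (coming from $x^{2}+y^{2}\geq 2|x|y$), and the pointwise control $|Q_y(x)|\leq \frac{1}{\pi|x+iy|}$ (coming from $|x|\leq|x+iy|$). The crucial input from the hypothesis $1<a_\Phi\leq b_\Phi<\infty$ is property (iii) of the excerpt: the map $t\mapsto\Phi(t)/t^{a_\Phi}$ is nondecreasing on $\mathbb{R}_{+}^{*}$, so $\Phi(t)\leq \Phi(1)\,t^{a_\Phi}$ for all $t\in(0,1]$. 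Now take $\lambda\geq \frac{1}{2\pi y}$ so that $|Q_y|/\lambda\leq 1$ pointwise; the above bound combined with $|Q_y|^{a_\Phi}\leq \pi^{-a_\Phi}|x+iy|^{-a_\Phi}$ yields
$$\int_{\mathbb{R}}\Phi\!\left(\tfrac{|Q_y(x)|}{\lambda}\right)dx\leq \tfrac{\Phi(1)}{\lambda^{a_\Phi}}\int_{\mathbb{R}}|Q_y(x)|^{a_\Phi}\,dx\leq \tfrac{\Phi(1)}{(\pi\lambda)^{a_\Phi}}\,J_{a_\Phi}(y).$$
Since $a_\Phi>1$, Lemma \ref{pro:main26} applies and makes $J_{a_\Phi}(y)$ finite; choosing $\lambda$ large enough forces the right-hand side below $1$, establishing $\|Q_y\|_{L^{\Phi}}^{lux}<\infty$.

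The only subtle point (and what might be called the main obstacle) is that in part (ii) the naive convexity estimate $\Phi(t)\leq t\,\Phi(1)$ used for part (i) is insufficient, because $Q_y$ just barely fails to be in $L^{1}(\mathbb{R})$ (its tail is of order $1/|x|$). It is precisely the strict inequality $a_\Phi>1$, which lets one replace the linear majorant by the power majorant $\Phi(1)\,t^{a_\Phi}$, that gives an integrable tail through $J_{a_\Phi}(y)$. The regularity $\Phi\in\mathscr{C}^{1}(\mathbb{R}_{+})$ is needed only to make the index $a_\Phi$ well-defined; the convexity of $\Phi$ is implicit in the statement that $1\leq a_\Phi$ forces the slope bound used above.
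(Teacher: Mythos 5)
Your proof is correct and follows exactly the route the paper intends: the paper omits the argument, declaring the proposition an immediate consequence of Lemma \ref{pro:main26}, and your write-up supplies precisely the missing details (the linear majorant $\Phi(t)\leq t\,\Phi(1)$ plus $J_{2}(y)<\infty$ for $P_y$, and the power majorant $\Phi(t)\leq \Phi(1)t^{a_\Phi}$ plus $J_{a_\Phi}(y)<\infty$ for $Q_y$). Your closing remark correctly identifies why $a_\Phi>1$ is the essential hypothesis in part (ii).
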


Let $\Phi$ be a growth function. We have the following inclusions. 
\begin{itemize}
\item[(a)] If  $\Phi$ is convex  then 
$$   C_{c}(\mathbb{R}) \subseteq M^{\Phi}(\mathbb{R}) \subseteq  L^{\Phi}(\mathbb{R}) \subseteq L^{1}\left(\frac{dt}{1+t^{2}}\right).
        $$ 
\item[(b)] If $\Phi \in \mathscr{C}^{1}(\mathbb{R}_{+})$ and  $1\leq a_\Phi \leq b_\Phi<\infty$ then 
$$ L^{a_\Phi}(\mathbb{R}) \cap L^{b_\Phi}(\mathbb{R}) \subseteq L^{\Phi}(\mathbb{R}) \subseteq L^{a_\Phi}(\mathbb{R}) + L^{b_\Phi}(\mathbb{R}) \subseteq L^{1}\left(\frac{dt}{1+|t|}\right) \cap L^{1}\left(\frac{dt}{1+t^{2}}\right).
       $$
\end{itemize}

\begin{prop}\label{pro:main706}
Let $\Phi$ be a convex growth function. If  $\Phi\in \Delta_{2}$ then $C_{c}(\mathbb{R})$ is dense in $L^{\Phi}(\mathbb{R})$.
\end{prop}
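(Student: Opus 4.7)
The plan is to prove density in two stages: first approximate an arbitrary $f\in L^{\Phi}(\mathbb{R})$ in Luxemburg norm by bounded functions of bounded support, then approximate each such bounded compactly supported function by an element of $C_c(\mathbb{R})$. At both stages the decisive ingredient is the \emph{equivalence between modular and norm convergence} that is available under the $\Delta_2$-condition: if $\int_{\mathbb{R}}\Phi(|f_n - f|/\lambda)\,dx \to 0$ for every (equivalently, for some) $\lambda>0$, then $\|f_n-f\|_{L^{\Phi}}^{lux}\to 0$, and conversely. This follows from (\ref{eq:delta2}) together with the convexity of $\Phi$ in a standard way.

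\textbf{Step 1 (truncation).} Given $f\in L^{\Phi}(\mathbb{R})$, set
$$f_n(x) := f(x)\,\chi_{\{|x|\leq n,\ |f(x)|\leq n\}}(x).$$
Since $\Phi\in\Delta_2$, the preliminary facts recalled above give $L^{\Phi}(\mathbb{R})=M^{\Phi}(\mathbb{R})$, so $\int_{\mathbb{R}}\Phi(|f|/\lambda)\,dx<\infty$ for every $\lambda>0$. Because $|f-f_n|\leq |f|$ pointwise and $f-f_n\to 0$ a.e., the dominated convergence theorem applied to the integrable majorant $\Phi(|f|/\lambda)$ yields
$$\int_{\mathbb{R}}\Phi\!\left(\tfrac{|f-f_n|}{\lambda}\right)dx \longrightarrow 0 \qquad \text{for every } \lambda>0.$$
The modular–norm equivalence then gives $\|f-f_n\|_{L^{\Phi}}^{lux}\to 0$.

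\textbf{Step 2 (continuous approximation).} Each $f_n$ is bounded and supported in $[-n,n]$. By Lusin's theorem, for any $\varepsilon>0$ there exists $\varphi\in C_c(\mathbb{R})$ with $\|\varphi\|_\infty\leq\|f_n\|_\infty$, $\mathrm{supp}\,\varphi\subset [-n-1,n+1]$, and $|\{x:\varphi(x)\neq f_n(x)\}|<\varepsilon$. For every $\lambda>0$,
$$\int_{\mathbb{R}}\Phi\!\left(\tfrac{|f_n-\varphi|}{\lambda}\right)dx
\;\leq\; \Phi\!\left(\tfrac{2\|f_n\|_\infty}{\lambda}\right)\varepsilon \;\longrightarrow\; 0 \quad \text{as } \varepsilon\to 0.$$
Invoking the modular–norm equivalence a second time produces $\varphi\in C_c(\mathbb{R})$ arbitrarily close to $f_n$ in the Luxemburg norm. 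A triangle inequality combines this with Step 1 to conclude.

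The main obstacle is the passage between the modular functional $\int\Phi(|f|/\lambda)dx$ and the Luxemburg norm: without $\Delta_2$, modular convergence need not imply norm convergence, and, more seriously, the strict inclusion $M^{\Phi}\subsetneq L^{\Phi}$ means the majorant $\Phi(|f|/\lambda)$ used in Step 1 need not be integrable for small $\lambda$. Both obstructions vanish simultaneously under $\Phi\in\Delta_2$, which is why that hypothesis is the only quantitative input required beyond classical measure-theoretic tools.
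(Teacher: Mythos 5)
Your proof is correct. It follows the same two-stage architecture as the paper --- first reduce to a bounded, compactly supported function, then apply Lusin's theorem with the same modular estimate $\int\Phi(|h-g|/\lambda)\,dx\le\Phi(2\|g\|_\infty/\lambda)\,|\{h\ne g\}|$ --- but the first stage is carried out differently. The paper simply cites the density of simple functions in $L^{\Phi}(\mathbb{R})$ (Rao--Ren), which is where its use of the $\Delta_{2}$-condition is hidden; you instead give a self-contained truncation argument, using $\Delta_{2}$ through the identity $M^{\Phi}(\mathbb{R})=L^{\Phi}(\mathbb{R})$ (recalled earlier in the paper) to make $\Phi(|f|/\lambda)$ an integrable majorant for every $\lambda>0$, and then dominated convergence. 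Your route is more elementary in that it avoids the external density result; the paper's is shorter. One small remark: you do not actually need the full modular--norm equivalence, whose nontrivial direction is the only place $\Delta_{2}$ would enter a second time. In both of your steps the modular tends to $0$ for \emph{every} $\lambda>0$, and then $\int_{\mathbb{R}}\Phi(|f-f_n|/\lambda)\,dx\le 1$ for $n$ large already forces $\|f-f_n\|_{L^{\Phi}}^{lux}\le\lambda$ directly from the definition of the Luxemburg norm --- which is exactly how the paper concludes its own Lusin step.
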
 

\begin{proof}
Let $f \in L^{\Phi}(\mathbb{R})$  and  $\varepsilon >0$. 
Since the set of simple functions on $\mathbb{R}$ is dense in $L^{\Phi}(\mathbb{R})$ (see.  \cite[Corollary 5]{raoren}),  there exists $g$ a simple function on $\mathbb{R}$ (i.e; $g=\sum_{i=1}^{n}\alpha_{i}\chi_{A_{i}}$ and  $|\{x\in \mathbb{R} :g(x)\not=0 \}|< \infty$, where the $\alpha_{i}$ are real numbers and the $A_{i}$ are measurable subsets of $\mathbb{R}$) such that
 $\|f-g\|_{L^{\Phi}}^{lux} \leq \frac{\varepsilon}{2}.$
There exists $h\in C_{c}(\mathbb{R})$ such that  
$$ \|h\|_{\infty} \leq \|g\|_{\infty} \hspace*{0.5cm} \text{and} \hspace*{0.5cm} |\{x\in \mathbb{R} :g(x)\not=h(x) \}|< \frac{1}{\Phi\left(\frac{4\|g\|_{\infty}}{\varepsilon}\right)}, $$
according to Lusin's theorem (see. \cite[ page 52]{wrudin}). We have
\begin{align*}
\int_{\mathbb{R}}\Phi\left(\frac{|h(x)-g(x)|}{\varepsilon/2}\right)dx &= \int_{\{x\in \mathbb{R} :g(x)\not=h(x) \}}\Phi\left(\frac{|h(x)-g(x)|}{\varepsilon/2}\right)dx\\
&\leq \Phi\left(\frac{2\|g-h\|_{\infty}}{\varepsilon}\right)
|\{x\in \mathbb{R} :g(x)\not=h(x) \}| \\
&\leq \Phi\left(\frac{4\|g\|_{\infty}}{\varepsilon}\right)|\{x\in \mathbb{R} :g(x)\not=h(x) \}| 
\leq 1.
\end{align*}
We deduce that  
$ \|h-g\|_{L^{\Phi}}^{lux} \leq \frac{\varepsilon}{2}.$
It follows that $$\|f-h\|_{L^{\Phi}}^{lux} \leq \|f-g\|_{L^{\Phi}}^{lux} +\|g-h\|_{L^{\Phi}}^{lux} \leq \frac{\varepsilon}{2}+\frac{\varepsilon}{2}= \varepsilon. $$ 
\end{proof}

Let  $\{\varphi_{t}\}_{t> 0}$ be a family of integrable functions on $\mathbb{R}$. We say that $\{\varphi_{t}\}_{t> 0}$  is an approximate identity on $\mathbb{R}$ if the following properties are satisfied
\begin{itemize}
\item[(a)]  for all $t> 0$, for all $u \in \mathbb{R}$, $\varphi_{t}(u) \geq 0$ and  $\int_{\mathbb{R}}\varphi_{t}(u)du=1$, 
\item[(b)] for each fixed $ \delta >0$, 
\begin{equation}\label{eq:sauaqi8n}
\lim_{t \to 0}\int_{|u |>\delta }\varphi_{t}(u)du =0
\end{equation}
and
\begin{equation}\label{eq:saui8n}
\lim_{t \to 0}\sup_{|u |>\delta}\varphi_{t}(u)=0. \end{equation}
\end{itemize} 
The family $\{P_{y}\}_{y>0}$ is a classic example of an approximate identity on $\mathbb{R}$  \cite[Page 19]{Jbgarnett}.

\begin{thm}\label{pro:main 3vqa2pm}
Let $\Phi$ be a convex growth function. Let $\{\varphi_{t}\}_{t> 0}$ be an approximate identity on $\mathbb{R}$ and  let  $f\in L^{\Phi}(\mathbb{R})$. The following assertions are satisfied.
\begin{itemize}
\item[(i)]  For all $t> 0$,  $(\varphi_{t}\star f) \in L^{\Phi}(\mathbb{R})$ and
$$   \|(\varphi_{t}\star f)\|_{L^{\Phi}}^{lux}   \leq   \|f\|_{L^{\Phi}}^{lux}.     $$
\item[(ii)] Moreover, if  $\Phi\in \Delta_{2}$ then we have
$$  \lim_{t \to 0}\| (\varphi_{t}\star f)-f\|_{L^{\Phi}}^{lux}=0.      $$
\end{itemize}
\end{thm}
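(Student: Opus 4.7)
My plan for part (i) exploits the probability-measure structure of $\varphi_t(u)\,du$ via Jensen's inequality; this part uses no approximate-identity hypothesis, only $\varphi_t\ge 0$ and $\int \varphi_t=1$. For any $\lambda>0$ with $\int \Phi(|f|/\lambda)\,dx\le 1$, the triangle inequality, monotonicity of $\Phi$, and Jensen's inequality applied against the probability measure $\varphi_t(u)\,du$ yield
$$\Phi\!\left(\frac{|(\varphi_t\star f)(x)|}{\lambda}\right)\le \int_{\mathbb{R}} \varphi_t(u)\,\Phi\!\left(\frac{|f(x-u)|}{\lambda}\right)du.$$
Integrating in $x$ and swapping the order with Fubini--Tonelli, together with translation invariance of Lebesgue measure and $\int \varphi_t=1$, gives $\int \Phi(|\varphi_t\star f|/\lambda)\,dx\le 1$; taking the infimum in $\lambda$ produces (i).

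For part (ii) I would first reduce to the case $f\in C_c(\mathbb{R})$ via Proposition~\ref{pro:main706}. Given $\varepsilon>0$ and $g\in C_c(\mathbb{R})$ with $\|f-g\|_{L^{\Phi}}^{lux}<\varepsilon$, part (i) together with the triangle inequality gives
$$\|\varphi_t\star f-f\|_{L^{\Phi}}^{lux}\le \|\varphi_t\star(f-g)\|_{L^{\Phi}}^{lux}+\|\varphi_t\star g-g\|_{L^{\Phi}}^{lux}+\|g-f\|_{L^{\Phi}}^{lux}\le 2\varepsilon+\|\varphi_t\star g-g\|_{L^{\Phi}}^{lux},$$
so it suffices to treat $g\in C_c(\mathbb{R})$ with $\mathrm{supp}(g)\subseteq[-R,R]$. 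Because $\Phi\in\Delta_2$, Luxemburg-norm convergence is equivalent to modular convergence, and I would verify $\int \Phi(|u_t|)\,dx\to 0$ where $u_t:=\varphi_t\star g-g$.

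I would split this integral over $\{|x|\le 2R\}$ and $\{|x|>2R\}$. On the bounded part, uniform continuity of $g$ combined with the approximate-identity properties yields $\|u_t\|_\infty\to 0$ (split the defining convolution at a small $\delta$, use uniform continuity on $|u|<\delta$ and the tail decay on $|u|\ge\delta$); the contribution is then at most $4R\,\Phi(\|u_t\|_\infty)\to 0$. On $\{|x|>2R\}$ we have $g(x)=0$, so $u_t(x)=(\varphi_t\star g)(x)$, and applying Jensen as in (i) produces
$$\int_{|x|>2R}\!\Phi(|(\varphi_t\star g)(x)|)\,dx\le \int_{\mathbb{R}} \varphi_t(u)\!\int_{|x|>2R}\!\Phi(|g(x-u)|)\,dx\,du.$$
After the substitution $y=x-u$, the inner integrand is supported where $|y|\le R$ and $|y+u|>2R$, forcing $|u|\ge R$; the whole thing is therefore bounded by $\int_{|u|\ge R}\varphi_t(u)\,du\cdot \int\Phi(|g|)\,dy$, which tends to $0$ by the tail property (b) of $\{\varphi_t\}$.

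The main obstacle I foresee is precisely the $\{|x|>2R\}$ piece: $\varphi_t$ need not be compactly supported (the Poisson kernel is the canonical example, with divergent first moment), so $\varphi_t\star g$ spreads over all of $\mathbb{R}$ and one cannot simply invoke uniform convergence or a na\"ive modulus-of-continuity argument. The saving observation is that when $x$ is far from $\mathrm{supp}(g)$, the Jensen majorant is automatically restricted to large $|u|$, which is exactly where the approximate-identity decay hypothesis bites; this support/tail interaction, rather than any moment estimate on $\varphi_t$, is the substantive technical point.
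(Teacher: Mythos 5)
Your proof is correct, and while it rests on the same three pillars as the paper's argument (Jensen plus Fubini for (i); density of $C_{c}(\mathbb{R})$ via Proposition~\ref{pro:main706}; the tail condition (\ref{eq:sauaqi8n}) of the approximate identity), the decomposition in part (ii) is genuinely different. The paper keeps $f$ itself throughout and splits the \emph{translation} variable: it first uses the compactly supported approximant $g$ only to prove that $u\mapsto f_{u}$ is continuous at $u=0$ in $\|\cdot\|_{L^{\Phi}}^{lux}$, and then estimates $\int\!\int \varphi_{t}(u)\Phi\bigl(|f_{u}(x)-f(x)|/2\varepsilon\bigr)\,du\,dx$ by cutting at $|u|\le\alpha_{\varepsilon}$ versus $|u|>\alpha_{\varepsilon}$, controlling the far part by the finiteness of $\int\Phi(|f|/\varepsilon)\,dx$ (which is where $\Delta_{2}$, i.e.\ $L^{\Phi}=M^{\Phi}$, enters) together with (\ref{eq:sauaqi8n}); this bounds the modular at scale $2\varepsilon$ by $1$ and hence the Luxemburg norm directly. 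You instead replace $f$ by $g\in C_{c}(\mathbb{R})$ at the outset and split the \emph{space} variable into $\{|x|\le 2R\}$ and $\{|x|>2R\}$, using uniform convergence $\|\varphi_{t}\star g-g\|_{\infty}\to 0$ on the bounded piece and the support/tail interaction ($|y|\le R$, $|y+u|>2R$ forces $|u|>R$) on the far piece, then pass from modular to norm convergence via $\Delta_{2}$. Your route buys a cleaner separation of where each hypothesis is used and correctly identifies that no moment condition on $\varphi_{t}$ is needed; the paper's route buys, as a by-product, the quantitatively useful continuity of translations in $L^{\Phi}$ and avoids invoking the modular-to-norm implication as a separate lemma. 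Both arguments use only the tail-mass condition (\ref{eq:sauaqi8n}) and not the pointwise sup condition of the approximate identity.
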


\begin{proof}
$i)$ This assertion
follows from Jensen's inequality and Fubbini's theorem.\\
$ii)$ Assume $f\not\equiv 0$ because there is nothing to show when $f\equiv 0$.\\
 Let $0<\varepsilon<1$.  There exists  $g\in  C_{c}(\mathbb{R})$ such that $ \|f-g\|_{L^{\Phi}}^{lux} \leq \frac{\varepsilon}{3}$, according to Proposition \ref{pro:main706}. For $u \in \mathbb{R}$, consider $f_{u}$ and $g_{u}$, the functions defined by
$$   f_{u}(x)=f(x-u) \hspace*{0.5cm}\text{and} \hspace*{0.5cm}g_{u}(x)=g(x-u),~~ \forall~ x \in \mathbb{R}.$$ 
It is immediate that
$$\| f_{u}-g_{u}\|_{L^{\Phi}}^{lux} = \| f-g\|_{L^{\Phi}}^{lux} \hspace*{0.5cm}\text{and} \hspace*{0.5cm} |\{x\in \mathbb{R}: g(x-u)-g(x)\not=0\}| \leq 2 |K|,$$
where $K$ is the support of  $g$. Since  $g$ is uniformly continuous on $\mathbb{R}$, there exists $ \alpha_{\varepsilon} >0$ such that for all $u \in \mathbb{R}$ with $|u| \leq \alpha_{\varepsilon}$,
$$   |g(x-u)-g(x)| \leq \frac{\varepsilon}{3} \Phi^{-1}\left( \frac{1}{2|K|}\right), ~~ \forall~ x \in \mathbb{R}.   $$
It follows that 
\begin{align*}
\int_{\mathbb{R}}\Phi\left(\frac{|g_{u}(x)- g(x)|}{\varepsilon/3}\right)dx&= \int_{\{x\in \mathbb{R}: g(x-u)-g(x)\not=0\}}\Phi\left(\frac{|g(x-u)- g(x)|}{\varepsilon/3}\right)dx\\
&\leq \Phi\left( \Phi^{-1}\left( \frac{1}{2|K|}\right)  \right)|\{x\in \mathbb{R}: g(x-u)-g(x)\not=0\}| \leq 1.
\end{align*}
We deduce that 
$ \|g_{u}-g\|_{L^{\Phi}}^{lux} \leq \frac{\varepsilon}{3}.$ Therefore $$\|f_{u}-f\|_{L^{\Phi}}^{lux} \leq \|f_{u}-g_{u}\|_{L^{\Phi}}^{lux}+\|g_{u}-g\|_{L^{\Phi}}^{lux}+\|g-f\|_{L^{\Phi}}^{lux}\leq \frac{\varepsilon}{3}+\frac{\varepsilon}{3}+\frac{\varepsilon}{3}=  \varepsilon.$$
For $t>0$,  put 
$$ I:= \int_{|u|\leq \alpha_{\varepsilon}}\int_{\mathbb{R}}\varphi_{t}(u)\Phi\left(\frac{|f_{u}(x)- f(x)|}{2\varepsilon}\right)du dx              $$ 
and
$$ II:= \int_{|u|> \alpha_{\varepsilon}}\int_{\mathbb{R}}\varphi_{t}(u)\Phi\left(\frac{|f_{u}(x)- f(x)|}{2\varepsilon}\right)du dx.              $$
By Fubbini's theorem we have
\begin{align*}
I &:= \int_{|u|\leq \alpha_{\varepsilon}}\int_{\mathbb{R}}\varphi_{t}(u)\Phi\left(\frac{|f_{u}(x)- f(x)|}{2\varepsilon}\right)du dx \\
&\leq \frac{1}{2}\int_{|u|\leq \alpha_{\varepsilon}}\varphi_{t}(u)\left(\int_{\mathbb{R}}\Phi\left(\frac{|f_{u}(x)- f(x)|}{\|f_{u}-f\|_{L^{\Phi}}^{lux}}\right)dx \right) du  
\leq \frac{1}{2}\int_{|u|\leq \alpha_{\varepsilon}}\varphi_{t}(u)du \leq \frac{1}{2}.
\end{align*}
Let us now estimate the quantity $II$. From the Relation (\ref{eq:sauaqi8n}), there exists $t_{\varepsilon}>0$ such that 
$$  \int_{|u|> \alpha_{\varepsilon}}\varphi_{t}(u)du \leq \frac{1}{2\int_{\mathbb{R}}\Phi\left(\frac{| f(x)|}{\varepsilon}\right)dx}, ~~ \forall~0<t<t_{\varepsilon}.          $$
For  $0<t<t_{\varepsilon}$, we have
\begin{align*}
II&:=\int_{|u|> \alpha_{\varepsilon}}\int_{\mathbb{R}}\varphi_{t}(u)\Phi\left(\frac{|f_{u}(x)- f(x)|}{2\varepsilon}\right)du dx \\
 &\leq\int_{|u|> \alpha_{\varepsilon}}\varphi_{t}(u)\left(\int_{\mathbb{R}}\Phi\left(\frac{1}{2}\frac{|f_{u}(x)|}{\varepsilon}+\frac{1}{2}\frac{|f(x)|}{\varepsilon}\right)dx \right) du \\ 
&\leq\int_{|u|> \alpha_{\varepsilon}}\varphi_{t}(u)\left( \frac{1}{2}\int_{\mathbb{R}}\Phi\left(\frac{|f_{u}(x)|}{\varepsilon}\right)dx  + \frac{1}{2}\int_{\mathbb{R}}\Phi\left(\frac{| f(x)|}{\varepsilon}\right)dx \right)du \\
&\leq\int_{|u|> \alpha_{\varepsilon}}\varphi_{t}(u)\left( \int_{\mathbb{R}}\Phi\left(\frac{| f(x)|}{\varepsilon}\right)dx\right)du \\
&\leq  \int_{\mathbb{R}}\Phi\left(\frac{| f(x)|}{\varepsilon}\right)dx\times \frac{1}{2\int_{\mathbb{R}}\Phi\left(\frac{| f(x)|}{\varepsilon}\right)dx}=\frac{1}{2}.
\end{align*}
From Jensen's inequality and estimates $I$ and $II$, we deduce that
\begin{align*}
\int_{\mathbb{R}}\Phi\left(\frac{|(\varphi_{t}\star f)(x)- f(x)|}{2\varepsilon}\right)dx  
&\leq \int_{\mathbb{R}}\int_{\mathbb{R}}\varphi_{t}(u)\Phi\left(\frac{|f_{u}(x)-f(x)|}{2\varepsilon}\right)dudx \\
&=I + II 
\leq \frac{1}{2} + \frac{1}{2}=1.
\end{align*}
Therefore,
$$ \| (\varphi_{t}\star f)-f\|_{L^{\Phi}}^{lux} \leq 2\varepsilon,~~ \forall~ 0<t<t_{\varepsilon}.        $$
\end{proof}

If  $\Phi(t)=t^{p}$ with $1 \leq  p< \infty$ then Theorem \ref{pro:main 3vqa2pm} corresponds to   \cite[Theorem 10.11]{javadmas}.
  
\subsection{Some properties of Hardy-Orlicz space on $\mathbb{C_{+}}$.}

Let $f$ be a measurable function on $\mathbb{R}$. The Poisson integral  $U_{f}$ of $f$ and its conjugate  $V_{f}$  are the functions defined respectively by
$$  U_{f}(x+iy):=  \frac{1}{\pi}\int_{\mathbb{R}}\frac{y}{(x-t)^{2}+y^{2}}f(t)dt,~~ \forall~ x+iy \in \mathbb{C_{+}}    $$
and
$$   V_{f}(x+iy):= \frac{1}{\pi}\int_{\mathbb{R}}\frac{x-t}{(x-t)^{2}+y^{2}}f(t)dt,~~ \forall~x+iy \in \mathbb{C}_{+},   $$
when they exist.
If $f\in L^{1}\left(\frac{dt}{1+t^{2}}\right)$ then $U_{f}$ is a harmonic function on $\mathbb{C_{+ }}$ and
  $$  \lim_{y \to 0}U_{f}(x+iy)=f(x),       $$
  for almost all $x\in \mathbb{R}$. Moreover, if  $f\in L^{1}\left( \frac{dt}{1+|t|} \right) \cap L^{1}\left(\frac{dt}{1+t^{2}}\right)$ then $V_{f}$ is the harmonic conjugate of $U_{f}$ and 
  $$ \lim_{y\to 0} V_{f}(x+iy)= \mathcal{H}(f)(x),      $$
for almost all $x\in \mathbb{R}$ (see. \cite{javadmas}).

\begin{cor}\label{pro:main 3v2qqakam}
Let $\Phi$ be a convex growth function. If   $\Phi\in \Delta_{2}$ then for all 
    $f\in L^{\Phi}(\mathbb{R})$, we have  
$$  \lim_{y \to 0}\| U_{f}(.+iy)-f\|_{L^{\Phi}}^{lux}=0.  $$
\end{cor}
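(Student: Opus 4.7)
The plan is to recognize that this corollary is essentially immediate from Theorem \ref{pro:main 3vqa2pm}(ii) once one identifies the Poisson integral with a convolution against the Poisson kernel. Specifically, for every $y>0$ and $x\in\mathbb{R}$,
$$U_{f}(x+iy)=\frac{1}{\pi}\int_{\mathbb{R}}\frac{y}{(x-t)^{2}+y^{2}}f(t)\,dt=(P_{y}\star f)(x).$$
So $U_{f}(\cdot+iy)-f=(P_{y}\star f)-f$, and the statement reduces to the convergence $\|(P_{y}\star f)-f\|_{L^{\Phi}}^{lux}\to 0$ as $y\to 0$.

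The next step is to invoke the fact, recalled in the paper before Theorem \ref{pro:main 3vqa2pm} (and cited to Garnett), that $\{P_{y}\}_{y>0}$ is an approximate identity on $\mathbb{R}$: namely, each $P_{y}$ is a nonnegative integrable function with $\int_{\mathbb{R}}P_{y}(u)\,du=1$, and both $\int_{|u|>\delta}P_{y}(u)\,du\to 0$ and $\sup_{|u|>\delta}P_{y}(u)\to 0$ as $y\to 0$ for every fixed $\delta>0$. Since $\Phi$ is convex and $\Phi\in\Delta_{2}$, the hypotheses of Theorem \ref{pro:main 3vqa2pm}(ii) are satisfied with $\varphi_{t}=P_{t}$ and the given $f\in L^{\Phi}(\mathbb{R})$.

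Applying part (ii) of Theorem \ref{pro:main 3vqa2pm} directly yields
$$\lim_{y\to 0}\|(P_{y}\star f)-f\|_{L^{\Phi}}^{lux}=0,$$
which, by the identification above, is exactly the conclusion $\lim_{y\to 0}\|U_{f}(\cdot+iy)-f\|_{L^{\Phi}}^{lux}=0$. There is no genuine obstacle here; the only thing to verify is that $\{P_{y}\}$ meets the approximate-identity definition used in Theorem \ref{pro:main 3vqa2pm}, which is standard and explicitly asserted in the paragraph preceding that theorem. Thus the proof is a one-line reduction to the previously established convolution convergence result.
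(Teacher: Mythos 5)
Your proof is correct and is exactly the paper's argument: the paper also deduces the corollary as an immediate consequence of Theorem \ref{pro:main 3vqa2pm}(ii), using that $U_{f}(\cdot+iy)=P_{y}\star f$ and that $\{P_{y}\}_{y>0}$ is an approximate identity. Nothing further is needed.
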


\begin{proof}
The proof is an immediate consequence of Theorem \ref{pro:main 3vqa2pm}.
\end{proof}

\begin{thm}\label{pro:main11pqa6}
Let  $\Phi$ be an N$-$function  and  $f \in L^{\Phi}(\mathbb{R})$. If $\Phi \in \nabla_{2}$ then  $V_{f}\in   h^{\Phi}(\mathbb{C}_{+})$,  $\mathcal{H}(f) \in L^{\Phi}(\mathbb{R})$ and there is a constant $C:=C_{\Phi}>0$ such that
 \begin{equation}\label{eq:5aqqx656}
 \|V_{f}\|_{h^{\Phi}}^{lux}=\| \mathcal{H}(f)\|_{L^{\Phi}}^{lux}\leq C\| f\|_{L^{\Phi}}^{lux}.   \end{equation}
Moreover,   
 $$  V_{f}(x+iy)=U_{\mathcal{H}(f)}(x+iy), ~~ \forall~x+iy \in \mathbb{C_{+}}         $$ 
and
$$  \lim_{y \to 0}\|V_{f}(.+iy)-\mathcal{H}(f)\|_{L^{\Phi}}^{lux}=0.        $$
\end{thm}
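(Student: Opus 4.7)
The plan is to reduce everything to known properties of Poisson integrals via the identification $V_{f}(x+iy)=U_{\mathcal{H}(f)}(x+iy)$, so the work splits into three tasks: (a) the Luxemburg-norm bound $\|\mathcal{H}(f)\|_{L^{\Phi}}^{lux}\leq C\|f\|_{L^{\Phi}}^{lux}$; (b) the pointwise identity $V_{f}=U_{\mathcal{H}(f)}$ on $\mathbb{C}_{+}$; and (c) the assembly of the norm equality $\|V_{f}\|_{h^{\Phi}}^{lux}=\|\mathcal{H}(f)\|_{L^{\Phi}}^{lux}$ together with the strong $L^{\Phi}$-limit from Theorem \ref{pro:main11a6} and Corollary \ref{pro:main 3v2qqakam}.

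For (a), the hypothesis $\Phi\in\nabla_{2}$, combined with the fact that an $N$-function in $\Delta_{2}$ lies in $\mathscr{U}$ (so $1<a_{\Phi}\leq b_{\Phi}<\infty$), provides the Dini-type estimate $\int_{0}^{t}\frac{\Phi(s)}{s^{2}}ds\leq C\frac{\Phi(t)}{t}$ from (\ref{eq:saaui8n}). I would combine the classical weak-type $(1,1)$ bound for $\mathcal{H}$ with its $L^{2}$-boundedness via a Calder\'{o}n--Zygmund-type truncation: for each $\lambda>0$, split $f=g_{\lambda}+b_{\lambda}$ with $g_{\lambda}=f\chi_{\{|f|\leq\lambda\}}$, estimate $\mathcal{H}(g_{\lambda})$ through the $L^{2}$-bound and Chebyshev and $\mathcal{H}(b_{\lambda})$ through weak-$(1,1)$, to obtain the distributional inequality
\begin{equation*}
|\{x\in\mathbb{R}:|\mathcal{H}(f)(x)|>2\lambda\}|\leq\frac{C_{1}}{\lambda^{2}}\int_{\{|f|\leq\lambda\}}|f(x)|^{2}dx+\frac{C_{2}}{\lambda}\int_{\{|f|>\lambda\}}|f(x)|dx.
\end{equation*}
The layer-cake formula $\int_{\mathbb{R}}\Phi(|\mathcal{H}(f)|)dx=\int_{0}^{\infty}\Phi'(\lambda)|\{|\mathcal{H}(f)|>\lambda\}|d\lambda$ together with Fubini and exactly the condition (\ref{eq:saaui8n}) then yields the modular inequality $\int_{\mathbb{R}}\Phi(|\mathcal{H}(f)|)dx\leq C\int_{\mathbb{R}}\Phi(|f|)dx$, which the $\Delta_{2}$-condition converts into the Luxemburg-norm bound.

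For (b), step (a) guarantees $\mathcal{H}(f)\in L^{\Phi}(\mathbb{R})\subseteq L^{1}(dt/(1+t^{2}))$, so $U_{\mathcal{H}(f)}$ is a well-defined harmonic function on $\mathbb{C}_{+}$. On a dense subclass, e.g.\ $f\in C_{c}(\mathbb{R})$ (dense by Proposition \ref{pro:main706}), the identity $Q_{y}\ast f=P_{y}\ast\mathcal{H}(f)$ is classical, reflecting the Fourier-multiplier relation between the Poisson and conjugate Poisson kernels. It extends to $f\in L^{\Phi}(\mathbb{R})$ by $L^{\Phi}$-continuity: step (a) makes $f\mapsto\mathcal{H}(f)$ continuous on $L^{\Phi}$, while for fixed $y>0$ Proposition \ref{pro:main70aq6} places $P_{y}$ and $Q_{y}$ in the relevant Orlicz/complementary spaces, making pointwise evaluation of either convolution continuous in $f$. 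With the identity in hand, Theorem \ref{pro:main11a6} applied to $U_{\mathcal{H}(f)}\in h^{\Phi}(\mathbb{C}_{+})$ gives $\|V_{f}\|_{h^{\Phi}}^{lux}=\|\mathcal{H}(f)\|_{L^{\Phi}}^{lux}$, and since $\nabla_{2}\subset\Delta_{2}$, Corollary \ref{pro:main 3v2qqakam} applied to $\mathcal{H}(f)$ delivers $\|V_{f}(\cdot+iy)-\mathcal{H}(f)\|_{L^{\Phi}}^{lux}\to 0$. The chief obstacle is (a): after the layer-cake/Fubini step, the $g_{\lambda}$-term reduces to integrals of the form $|f(x)|^{2}\int_{c|f(x)|}^{\infty}\Phi'(\lambda)/\lambda^{2}d\lambda$ which (\ref{eq:saaui8n}) is precisely tailored to control by $\Phi(|f(x)|)$; arranging the constants correctly, and making genuine use of both index bounds $1<a_{\Phi}\leq b_{\Phi}<\infty$, is the delicate part.
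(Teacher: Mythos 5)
Your overall architecture --- reduce everything to the identity $V_{f}=U_{\mathcal{H}(f)}$, obtain that identity by density plus H\"older's inequality with $P_{y},Q_{y}\in L^{\Psi}(\mathbb{R})$, then invoke the Poisson isometry (Lemma \ref{pro:main 3aqqq2}) and Corollary \ref{pro:main 3v2qqakam} --- is exactly the paper's; the only difference in parts (b) and (c) is cosmetic (the paper takes its dense class to be $L^{a_{\Phi}}\cap L^{b_{\Phi}}$ rather than $C_{c}(\mathbb{R})$ and quotes Mashreghi for the identity on that class). The paper, however, does not prove the $L^{\Phi}$-boundedness of $\mathcal{H}$ at all: it cites \cite[Theorem 3.1.4]{kokokrbec}. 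The genuine problem is in your step (a), the one place where you supply your own argument, and as written it does not close. After the layer-cake/Fubini step, the good part $g_{\lambda}=f\chi_{\{|f|\leq\lambda\}}$, estimated through the $L^{2}$ bound and Chebyshev, produces $\int_{\mathbb{R}}|f(x)|^{2}\int_{c|f(x)|}^{\infty}\Phi'(\lambda)\lambda^{-2}\,d\lambda\,dx$, and closing this term requires $\int_{t}^{\infty}\Phi'(\lambda)\lambda^{-2}\,d\lambda\lesssim\Phi(t)t^{-2}$, which holds only when $\Phi$ has upper type strictly less than $2$ and fails already for $\Phi(t)=t^{3}$, an $N$-function in $\nabla_{2}$ for which the inner integral is infinite. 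The condition (\ref{eq:saaui8n}) that you describe as ``precisely tailored'' to this term in fact controls the \emph{other} term: it reads $\int_{0}^{t}\Phi(s)s^{-2}\,ds\lesssim\Phi(t)/t$, an integral from $0$ to $t$, and after integration by parts it is what handles the bad-part contribution $\int_{\mathbb{R}}|f(x)|\int_{0}^{|f(x)|}\Phi'(\lambda)\lambda^{-1}\,d\lambda\,dx$ coming from the weak-$(1,1)$ estimate. So your scheme proves the modular inequality only in the range $1<a_{\Phi}\leq b_{\Phi}<2$.

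The repair is standard but must be made explicit: since $\Phi\in\nabla_{2}\cap\Delta_{2}$ gives $1<a_{\Phi}\leq b_{\Phi}<\infty$, choose $1<p_{0}<a_{\Phi}$ and $b_{\Phi}<p_{1}<\infty$ and run the same truncation with the $L^{p_{1}}$ bound on $g_{\lambda}$ and the (weak) $L^{p_{0}}$ bound on $b_{\lambda}$; the two resulting Fubini integrals $\int_{t}^{\infty}\Phi'(\lambda)\lambda^{-p_{1}}\,d\lambda$ and $\int_{0}^{t}\Phi'(\lambda)\lambda^{-p_{0}}\,d\lambda$ are then controlled by $\Phi(t)t^{-p_{1}}$ and $\Phi(t)t^{-p_{0}}$ respectively, using the monotonicity of $t\mapsto\Phi(t)/t^{a_{\Phi}}$ and $t\mapsto\Phi(t)/t^{b_{\Phi}}$. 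Alternatively, simply cite the boundedness of $\mathcal{H}$ on $L^{\Phi}(\mathbb{R})$ as the paper does; everything else in your proposal then goes through.
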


For the proof of the Theorem \ref{pro:main11pqa6}, we need the following lemma.

\begin{lem}\label{pro:main 3aqqq2}
Let $\Phi$ be a convex growth function such that $\Phi(t)>0$ for all  $t>0$ and $f\in L^{1}\left(\frac{dt}{1+t^{2}}\right)$. The following assertions are equivalent.
\begin{itemize}
 \item[(i)] $f\in L^{\Phi}(\mathbb{R})$.
 \item[(ii)]  $U_{f}\in h^{\Phi}(\mathbb{C}_{+})$.
 \end{itemize}
 Moreover, $$   \|U_{f}\|_{h^{\Phi}}^{lux}  = \|f\|_{L^{\Phi}}^{lux}.  $$
 \end{lem}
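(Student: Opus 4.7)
The plan is to use Jensen's inequality together with the fact that $x \mapsto P_{y}(x-t)$ is a probability density on $\mathbb{R}$ to prove (i)$\Rightarrow$(ii) with the norm control $\|U_{f}\|_{h^{\Phi}}^{lux}\leq \|f\|_{L^{\Phi}}^{lux}$, and then to use Fatou's lemma on the boundary limit $U_{f}(\cdot+iy)\to f$ to get the reverse direction (ii)$\Rightarrow$(i) together with the converse inequality.

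For (i)$\Rightarrow$(ii), assume $f\in L^{\Phi}(\mathbb{R})$. Since $\Phi(t)>0$ for all $t>0$, $f\in L^{\Phi}(\mathbb{R})$ implies $f\in L^{1}(dt/(1+t^{2}))$, so $U_{f}$ is harmonic on $\mathbb{C}_{+}$. Fix $\lambda>\|f\|_{L^{\Phi}}^{lux}$ (so that $\int_{\mathbb{R}}\Phi(|f(t)|/\lambda)dt\leq 1$) and $y>0$. Because $\int_{\mathbb{R}}P_{y}(x-t)dt=1$, Jensen's inequality applied to the convex function $\Phi$ yields
$$\Phi\!\left(\frac{|U_{f}(x+iy)|}{\lambda}\right)\leq \int_{\mathbb{R}}P_{y}(x-t)\,\Phi\!\left(\frac{|f(t)|}{\lambda}\right)dt.$$
Integrating in $x$ and using Fubini together with $\int_{\mathbb{R}}P_{y}(x-t)dx=1$ gives
$$\int_{\mathbb{R}}\Phi\!\left(\frac{|U_{f}(x+iy)|}{\lambda}\right)dx\leq \int_{\mathbb{R}}\Phi\!\left(\frac{|f(t)|}{\lambda}\right)dt\leq 1,$$
so $\|U_{f}(\cdot+iy)\|_{L^{\Phi}}^{lux}\leq \lambda$. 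Taking the supremum in $y$ and the infimum in $\lambda$ gives $U_{f}\in h^{\Phi}(\mathbb{C}_{+})$ with $\|U_{f}\|_{h^{\Phi}}^{lux}\leq \|f\|_{L^{\Phi}}^{lux}$.

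For (ii)$\Rightarrow$(i), assume $U_{f}\in h^{\Phi}(\mathbb{C}_{+})$. Since $f\in L^{1}(dt/(1+t^{2}))$, the boundary-value property recalled just before the statement gives $\lim_{y\to 0}U_{f}(x+iy)=f(x)$ for almost every $x\in\mathbb{R}$. Fix any $\lambda>\|U_{f}\|_{h^{\Phi}}^{lux}$; then for each $y>0$ one has $\int_{\mathbb{R}}\Phi(|U_{f}(x+iy)|/\lambda)dx\leq 1$. Using continuity of $\Phi$ at the boundary limit and Fatou's lemma (the integrands are nonnegative),
$$\int_{\mathbb{R}}\Phi\!\left(\frac{|f(x)|}{\lambda}\right)dx=\int_{\mathbb{R}}\liminf_{y\to 0}\Phi\!\left(\frac{|U_{f}(x+iy)|}{\lambda}\right)dx\leq \liminf_{y\to 0}\int_{\mathbb{R}}\Phi\!\left(\frac{|U_{f}(x+iy)|}{\lambda}\right)dx\leq 1.$$
Hence $\|f\|_{L^{\Phi}}^{lux}\leq \lambda$, and letting $\lambda\downarrow \|U_{f}\|_{h^{\Phi}}^{lux}$ yields $\|f\|_{L^{\Phi}}^{lux}\leq \|U_{f}\|_{h^{\Phi}}^{lux}$. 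Combined with the previous inequality this gives the equivalence and the equality of norms.

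The only delicate point is the reverse inequality: it hinges on the a.e.\ boundary limit $U_{f}(\cdot+iy)\to f$ (valid thanks to the standing hypothesis $f\in L^{1}(dt/(1+t^{2}))$) and on the continuity of $\Phi$, so that the pointwise passage to the liminf inside $\Phi$ is legitimate; the positivity assumption $\Phi(t)>0$ for $t>0$ is used to ensure that the Luxembourg functional is genuinely a norm and that the set of admissible $\lambda$'s is an interval $(\|f\|_{L^{\Phi}}^{lux},\infty)$ with the desired monotonicity. Everything else is a routine application of Jensen's inequality and Fatou's lemma.
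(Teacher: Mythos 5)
Your proposal is correct and follows essentially the same route as the paper: Jensen's inequality plus Fubini for the forward inequality (the paper delegates this to its approximate-identity theorem, of which the Poisson kernel is an instance), and the a.e.\ boundary limit combined with Fatou's lemma for the reverse inequality. Your use of $\lambda>\|f\|_{L^{\Phi}}^{lux}$ rather than dividing directly by the Luxemburg norm is a harmless (indeed slightly more careful) variant of the same argument.
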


\begin{proof} 
We assume $f\not=0$ because there is nothing to show otherwise.\\
Show that $i)$ implies $ii)$.
By definition, we have
  $$U_{f}(x+iy)=\left( P_{y}\star f\right)(x),~ \forall~ x+iy \in \mathbb{C_{+}}. $$
Since $\{P_{y}\}_{y>0}$ is an approximate identity on $\mathbb{R}$ we have
$$ \sup_{y> 0} \|(P_{y}\star f)\|_{L^{\Phi}}^{lux}   \leq   \|f\|_{L^{\Phi}}^{lux},
      $$
thanks to point (i) of Theorem \ref{pro:main 3vqa2pm}. It follows that
\begin{equation}\label{eq:65f6}
  \|U_{f}\|_{h^{\Phi}}^{lux} =\sup_{y> 0} \|(P_{y}\star f)\|_{L^{\Phi}}^{lux}   \leq   \|f\|_{L^{\Phi}}^{lux}.    \end{equation}
Which allows us to say that  $U_{f}\in h^{\Phi}(\mathbb{C}_{+})$ since $U_{f}$ is harmonic on $\mathbb{C_{+}}$.\\
 Let's prove the converse. Since 
 $$ \lim_{y \to 0}U_{f}(x+iy)=f(x),      $$
for almost all $x\in \mathbb{R}$, we have 
 $$  \int_{\mathbb{R}}\Phi\left(\frac{|f(x)|}{\|U_{f}\|_{h^{\Phi}}^{lux}}\right)dx \leq \liminf_{y\to 0}\int_{\mathbb{R}}\Phi\left(\frac{|U_{f}(x+iy)|}{\|U_{f}\|_{h^{\Phi}}^{lux}}\right)dx \leq \sup_{y> 0} \int_{\mathbb{R}}\Phi\left(\frac{|U_{f}(x+iy)|}{\|U_{f}(.+iy)\|_{L^{\Phi}}^{lux}}\right)dx \leq 1,  $$
thanks to Fatou's lemma.  It follows that  $f\in L^{\Phi}(\mathbb{R})$ and 
 \begin{equation}\label{eq:aqf6}
 \|f\|_{L^{\Phi}}^{lux}\leq \|U_{f}\|_{h^{\Phi}}^{lux}.    
 \end{equation}
We deduce that 
 $$  \|U_{f}\|_{h^{\Phi}}^{lux} = \|f\|_{L^{\Phi}}^{lux},      $$
 thanks to  Relations (\ref{eq:65f6}) and (\ref{eq:aqf6}).
 \end{proof} 
 
It follows from the Lemma \ref{pro:main 3aqqq2} that the map $f\longmapsto U_{f}$ is a linear isometry of $L^{\Phi}(\mathbb{ R})$ into $h^{\Phi}(\mathbb{C}_{+})$.

 \proof[Proof of Theorem \ref{pro:main11pqa6}]
We assume $f\not=0$ because there is nothing to show otherwise. Since $\mathcal{H}$ is  bounded on $L^{\Phi}(\mathbb{ R})$, there exists $C>0$ a constant independent of $f$ such that
$$ \| \mathcal{H}(f)\|_{L^{\Phi}}^{lux} \leq C \| f\|_{L^{\Phi}}^{lux},  $$
(see. \cite[Theorem 3.1.4]{kokokrbec}).   It follows that
$$ \|U_{\mathcal{H}(f)}\|_{h^{\Phi}}^{lux}=\| \mathcal{H}(f)\|_{L^{\Phi}}^{lux}  $$
and 
 $$ \lim_{y\to 0}\| U_{\mathcal{H}(f)}(.+iy)-\mathcal{H}(f)\|_{L^{\Phi}}^{lux}=0,        $$
thanks to Lemma \ref{pro:main 3aqqq2} and Corollary \ref{pro:main 3v2qqakam}.
 To have  $V_{f}\in   h^{\Phi}(\mathbb{C}_{+})$ we need only prove that
$$ V_{f}(x+iy)=U_{\mathcal{H}(f)}(x+iy), ~~ \forall~x+iy \in \mathbb{C_{+}}.    $$
Since $\Phi$ and its complementary function $\Psi$ both satisfy $\nabla_2-$condition, we have $1< a_\Phi \leq b_\Phi<\infty$ and $1< a_\Psi \leq b_\Psi<\infty$. 
As  $L^{a_\Phi }(\mathbb{R}) \cap L^{b_\Phi}(\mathbb{R})$ is a dense subset in $L^{\Phi}(\mathbb{R})$, in view of the Proposition \ref{pro:main706}, there exists a sequence $\{f_{n}\}_{n}$ in $L^{a_\Phi }(\mathbb{R}) \cap L^{b_\Phi}(\mathbb{R})$ such that
$$    \lim_{n \to \infty}\|f_{n}-f\|_{L^{\Phi}}^{lux}=0.   $$
For $n\in \mathbb{N}$, we have $\mathcal{H}(f_{n}) \in L^{a_\Phi}(\mathbb{R}) \cap L^{b_\Phi}(\mathbb{R})$ and
$$ V_{f_{n}}(x+iy)=U_{\mathcal{H}(f_{n})}(x+iy), ~~ \forall~x+iy \in \mathbb{C_{+}},      $$
(see. \cite[Theorem 14.7]{javadmas}).
For $x+iy \in \mathbb{C_{+}}$,  it follows that
\begin{align*}
 \left|V_{f}(x+iy)-U_{\mathcal{H}(f)}(x+iy)\right|
 &= |(Q_{y}\star(f-f_{n}) )(x) + (P_{y}\star(\mathcal{H}(f_{n}-f)) )(x) | \\
 &\lesssim \|Q_{y}\|_{L^{\Psi}}^{lux}\|f_{n}-f\|_{L^{\Phi}}^{lux} + \|P_{y}\|_{L^{\Psi}}^{lux}\|\mathcal{H}(f_{n}-f)) \|_{L^{\Phi}}^{lux} \\
 &\lesssim \|Q_{y}\|_{L^{\Psi}}^{lux}\|f_{n}-f\|_{L^{\Phi}}^{lux} + \|P_{y}\|_{L^{\Psi}}^{lux}\|f_{n}-f \|_{L^{\Phi}}^{lux},
 \end{align*} 
by H\"older's inequality in Orlicz spaces. We deduce that 
 $$ V_{f}(x+iy)=U_{\mathcal{H}(f)}(x+iy), ~~ \forall~x+iy \in \mathbb{C_{+}},     $$
 when $n \longrightarrow \infty$.
\epf

\begin{prop}\label{pro:main6mq4}
Let $\Phi$ be a convex growth function such that  $\Phi(t)>0$ for all  $t>0$. For $F \in h^{\Phi}(\mathbb{C}_{+})$, we have
 \begin{equation}\label{eq:ualp5leson}
 |F(x+iy)|\leq \Phi^{-1}\left(\frac{2}{\pi y}\right)\|F\|_{h^{\Phi}}^{lux}, ~~ \forall~x+iy \in \mathbb{C_{+}}.\end{equation}
 \end{prop}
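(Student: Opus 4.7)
The plan is to prove the pointwise bound via the sub-mean-value property applied to a suitable convex composition of $|F|$. Without loss of generality I may assume $\lambda := \|F\|_{h^{\Phi}}^{lux} \in (0,\infty)$, since the bound is trivial when $\lambda = 0$ (then $F \equiv 0$) and vacuous when $\lambda = +\infty$. The key observation is that because $F$ is harmonic on $\mathbb{C}_{+}$, the function $|F|$ is subharmonic; composing with the convex nondecreasing function $\Phi$ (applied to $|F|/\lambda$) yields another subharmonic function on $\mathbb{C}_{+}$. Note also that $\Phi$ is a homeomorphism of $\mathbb{R}_{+}$ onto itself under the assumption $\Phi(t)>0$ for $t>0$, so $\Phi^{-1}$ is well defined.

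Fix $z_{0}=x+iy \in \mathbb{C}_{+}$ and choose any radius $r$ with $0<r<y$, so that $\overline{D(z_{0},r)} \subset \mathbb{C}_{+}$. By the area sub-mean-value property applied to the subharmonic function $w\mapsto \Phi\bigl(|F(w)|/\lambda\bigr)$ on this disk, and then by Fubini together with the inclusion $D(z_{0},r)\subset \{u+iv : v\in(y-r,y+r)\}$, I obtain
\begin{equation*}
\Phi\!\left(\frac{|F(z_{0})|}{\lambda}\right)
\leq \frac{1}{\pi r^{2}}\int_{y-r}^{y+r}\!\int_{\mathbb{R}}\Phi\!\left(\frac{|F(u+iv)|}{\lambda}\right)du\, dv.
\end{equation*}
For each fixed $v>0$, the definition of the Luxemburg norm together with continuity of $\Phi$ (letting $\mu\downarrow\lambda$ in $\int\Phi(|F(\cdot+iv)|/\mu)\le 1$) gives $\int_{\mathbb{R}}\Phi(|F(u+iv)|/\lambda)du\leq 1$. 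Therefore the double integral is at most $2r$, and the right-hand side is bounded by $2/(\pi r)$.

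Letting $r\to y^{-}$ yields $\Phi(|F(z_{0})|/\lambda) \le 2/(\pi y)$, and applying the (nondecreasing) inverse $\Phi^{-1}$ to both sides gives the desired inequality. The only point requiring care is the justification that $\int_{\mathbb{R}}\Phi(|F(\cdot+iv)|/\lambda)\,du\leq 1$ holds with $\lambda$ equal to (not strictly greater than) the Luxemburg norm; this is handled by monotone convergence as $\mu\downarrow\lambda$ since $\Phi$ is continuous and nondecreasing. Everything else is a direct application of the subharmonic sub-mean-value inequality and the convexity hypothesis, so I do not expect any essential obstacle.
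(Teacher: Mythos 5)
Your proposal is correct and follows essentially the same route as the paper: the area (sub-)mean-value inequality over a disk $\overline{\mathcal{D}(z_0,r)}$ with $r<y$, Jensen/convexity to pass $\Phi$ inside, the inclusion of the disk in a horizontal strip of width $2r$ so that the Luxemburg-norm constraint bounds each slice integral by $1$, and then $r\to y^{-}$. Your extra remark justifying $\int_{\mathbb{R}}\Phi(|F(\cdot+iv)|/\lambda)\,du\le 1$ at $\lambda$ equal to the norm (via monotone convergence as $\mu\downarrow\lambda$) is a point the paper uses implicitly, and it is handled correctly.
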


 \begin{proof}
Let $0\not\equiv F \in h^{\Phi}(\mathbb{C}_{+})$. 
For $z_{0}=x_{0}+iy_{0}\in \mathbb{C_{+}}$ and   $0<r<y_{0}$, using
$$ F(z_{0})= \frac{1}{\pi r^{2}}\int \int_{\overline{\mathcal{D}(z_{0}, r)}}F(u+iv)dudv,        $$
where $\mathcal{D}(z_{0}, r)$ is the disk centered at $z_{0}$ and of radius $r$,  and  Jensen's inequality we have
\begin{align*}
 \Phi\left(\frac{|F(z_{0})|}{\|F\|_{h^{\Phi}}^{lux}}\right) &\leq \Phi\left( \frac{1}{\pi r^{2}}\int \int_{\overline{\mathcal{D}(z_{0}, r)}}\frac{|F(u+iv)|}{\|F\|_{h^{\Phi}}^{lux}}dudv \right)\\
 &\leq\frac{1}{\pi r^{2}}\int \int_{\overline{\mathcal{D}(z_{0}, r)}}\Phi\left(\frac{|F(u+iv)|}{\|F\|_{h^{\Phi}}^{lux}}\right)dudv\\ 
 &\leq \frac{1}{\pi r^{2}}\int_{0}^{2r}\int_{\mathbb{R}}\Phi\left(\dfrac{|F(u+iv)|}{\|F\|_{h^{\Phi}}^{lux}}\right)du dv  
 \leq  \frac{1}{\pi r^{2}}\int_{0}^{2r}dv = \frac{2}{\pi r}.
 \end{align*}
We deduce that $$ \Phi\left(\frac{|F(z_{0})|}{\|F\|_{h^{\Phi}}^{lux}}\right) \leq  \frac{2}{\pi r}, ~~\forall~r<y_{0}.       $$
 \end{proof}

\begin{lem}\label{pro:main6aapaqmq4k}
Let $\Phi$  a convex growth function such that $\Phi(t)>0$ for all  $t>0$ and  $F \in h^{\Phi}(\mathbb{C}_{+})$. For  $\beta>0$, consider $F_{\beta}$ the function defined by   
$$ F_{\beta}(z)=F(z+i\beta),~~\forall~ z \in \mathbb{C}_{+}.    $$
Then $F_{\beta}$ is continuous and bounded on  $\overline{\mathbb{C}_{+}}:=\mathbb{C}_{+}\cup \mathbb{R}$ and harmonic on $\mathbb{C_{+}}$.     Moreover, 
$$  F(z+i\beta)= \frac{1}{\pi}\int_{\mathbb{R}}\frac{y}{(x-t)^{2}+y^{2}}F(t+i\beta)dt,~~\forall~ z=x+iy \in \mathbb{C_{+}}.      $$
\end{lem}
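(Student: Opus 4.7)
The plan is to verify the three regularity claims about $F_\beta$ first, and then obtain the Poisson representation from a classical uniqueness argument for bounded harmonic functions on $\mathbb{C}_+$.

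First, since $F$ is harmonic on $\mathbb{C}_+$, the translate $F_\beta$ is harmonic on $\mathbb{C}_+$ (the shift by $i\beta$ preserves harmonicity). Continuity on $\overline{\mathbb{C}_+}$ is immediate: if $z_n = x_n + iy_n \in \overline{\mathbb{C}_+}$ converges to $z_0 = x_0 + iy_0 \in \overline{\mathbb{C}_+}$, then $z_n + i\beta \to z_0 + i\beta$ in $\mathbb{C}_+$ (since $y_n + \beta \geq \beta > 0$), and by continuity of $F$ on $\mathbb{C}_+$ one gets $F_\beta(z_n)\to F_\beta(z_0)$. Boundedness on $\overline{\mathbb{C}_+}$ follows from Proposition \ref{pro:main6mq4}: for every $z = x+iy \in \overline{\mathbb{C}_+}$,
$$|F_\beta(z)|=|F(x+i(y+\beta))|\leq \Phi^{-1}\!\left(\tfrac{2}{\pi(y+\beta)}\right)\|F\|_{h^{\Phi}}^{lux}\leq \Phi^{-1}\!\left(\tfrac{2}{\pi\beta}\right)\|F\|_{h^{\Phi}}^{lux},$$
using that $\Phi^{-1}$ is nondecreasing. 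Denote this uniform bound by $M$.

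For the Poisson representation, write $g(t):=F(t+i\beta)$, so that $g$ is continuous and bounded on $\mathbb{R}$ by $M$. Define
$$G(x+iy):=F_\beta(x+iy)-\frac{1}{\pi}\int_{\mathbb{R}}\frac{y}{(x-t)^{2}+y^{2}}\,g(t)\,dt,\qquad x+iy\in\mathbb{C}_+.$$
The integral is well-defined because $g\in L^\infty(\mathbb{R})\subseteq L^{1}(\frac{dt}{1+t^{2}})$. Since $\{P_y\}_{y>0}$ is an approximate identity and the Poisson integral of a bounded continuous function is harmonic on $\mathbb{C}_+$, continuous up to $\mathbb{R}$ with boundary values $g$, and bounded by $\|g\|_\infty$, the function $G$ is harmonic on $\mathbb{C}_+$, continuous on $\overline{\mathbb{C}_+}$, bounded (by $2M$), and vanishes identically on $\mathbb{R}$.

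It then suffices to show $G\equiv 0$. I would do this by Schwarz reflection: extend $G$ to all of $\mathbb{C}$ by setting $\widetilde{G}(z)=G(z)$ on $\overline{\mathbb{C}_+}$ and $\widetilde{G}(z)=-G(\overline{z})$ on the lower half-plane. Because $G$ is continuous on $\overline{\mathbb{C}_+}$ and vanishes on $\mathbb{R}$, $\widetilde{G}$ is continuous on $\mathbb{C}$, and harmonic on $\mathbb{C}\setminus\mathbb{R}$; the reflection principle for harmonic functions (checked via the mean value property across the axis) gives harmonicity on all of $\mathbb{C}$. As $\widetilde{G}$ is bounded on $\mathbb{C}$, Liouville's theorem forces $\widetilde{G}$ to be constant, and since $\widetilde{G}=0$ on $\mathbb{R}$, we conclude $\widetilde{G}\equiv 0$, hence $G\equiv 0$, which yields the stated Poisson formula for $F_\beta$.

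The only delicate step is the uniqueness of the Poisson extension, i.e.\ the vanishing of $G$; all the rest is bookkeeping. I expect no analytical obstacle here since the boundedness of $F_\beta$ on $\overline{\mathbb{C}_+}$ furnished by Proposition \ref{pro:main6mq4} makes the reflection/Liouville argument go through cleanly, without needing any Phragmén--Lindel\"of refinement.
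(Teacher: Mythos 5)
Your proof is correct and follows the same structure as the paper's: continuity and harmonicity of $F_{\beta}$ are immediate, boundedness on $\overline{\mathbb{C}_{+}}$ comes from Proposition \ref{pro:main6mq4}, and the Poisson formula then follows from the representation theorem for bounded harmonic functions on $\mathbb{C}_{+}$ that are continuous up to the boundary. The only difference is that the paper simply cites \cite[Lemma 3.4]{Jbgarnett} for this last step, whereas you supply a self-contained proof of it via the difference function $G$, odd (Schwarz) reflection across $\mathbb{R}$, and Liouville's theorem for bounded harmonic functions --- a valid argument.
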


\begin{proof}
By construction $F_{\beta}$ is continuous on $\overline{\mathbb{C_{+}}}$ and harmonic on $\mathbb{C_{+}}$. For  $x+iy \in \overline{\mathbb{C_{+}}}$,  we have
 $$ |F_{\beta}(x+iy)|=|F(x+i(y+\beta))|\leq \Phi^{-1}\left(\frac{2}{\pi (y+\beta)}\right)\|F\|_{h^{\Phi}}^{lux} \leq \Phi^{-1}\left(\frac{2}{\pi \beta}\right)\|F\|_{h^{\Phi}}^{lux}.      $$ 
We deduce that $F_{\beta}$ is bounded on $\overline{\mathbb{C_{+}}}$, according to  Proposition \ref{pro:main6mq4}. It follows that
   $$F_{\beta}(x+iy)=\int_{\mathbb{R}}P_{y}(x-t)F_{\beta}(t)dt = \int_{\mathbb{R}}P_{y}(x-t)F(t+i\beta)dt,~~\forall~ x+iy \in \mathbb{C_{+}}, $$
(see \cite[Lemma 3.4]{Jbgarnett}).
\end{proof}

\begin{cor}\label{pro:main6apaqmq4}
Let $\Phi$ be a convex growth function such that  $\Phi(t)>0$ for all  $t>0$. For  $F \in h^{\Phi}(\mathbb{C}_{+})$, the function $y\mapsto \|F(.+iy)\|_{L^{\Phi}}^{lux}$ is decreasing on $\mathbb{R_{+}^{*}}$. Moreover,
$$ \|F\|_{h^{\Phi}}^{lux}= \sup_{y>0}\|F(.+iy)\|_{L^{\Phi}}^{lux}=    \lim_{y \to 0}\|F(.+iy)\|_{L^{\Phi}}^{lux}.    $$
\end{cor}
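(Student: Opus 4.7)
The plan is to exhibit, whenever $y_1 > y_2 > 0$, the slice $F(\cdot + iy_1)$ as a Poisson convolution of the slice $F(\cdot + iy_2)$, and then invoke the contractivity of approximate-identity convolution on $L^{\Phi}(\mathbb{R})$ proved in Theorem~\ref{pro:main 3vqa2pm}(i). The monotonicity of $y \mapsto \|F(\cdot + iy)\|_{L^\Phi}^{lux}$ then follows immediately, and both equalities in the ``moreover'' clause are direct consequences.

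First, fix $y_2 > 0$. Since $F \in h^{\Phi}(\mathbb{C}_+)$, the function $t \mapsto F(t + iy_2)$ belongs to $L^{\Phi}(\mathbb{R})$. Applying Lemma~\ref{pro:main6aapaqmq4k} with $\beta = y_2$ to the harmonic function $F$, and evaluating the resulting Poisson identity at height $y = y_1 - y_2 > 0$, I obtain for every $x \in \mathbb{R}$
\[
F(x + iy_1) \;=\; \frac{1}{\pi}\int_{\mathbb{R}} \frac{y_1-y_2}{(x-t)^2 + (y_1-y_2)^2}\, F(t + iy_2)\, dt \;=\; \bigl(P_{y_1-y_2} \star F(\cdot + iy_2)\bigr)(x).
\]
Because $\{P_y\}_{y > 0}$ is an approximate identity on $\mathbb{R}$, Theorem~\ref{pro:main 3vqa2pm}(i) yields
\[
\|F(\cdot + iy_1)\|_{L^\Phi}^{lux} \;=\; \bigl\|P_{y_1-y_2} \star F(\cdot + iy_2)\bigr\|_{L^\Phi}^{lux} \;\leq\; \|F(\cdot + iy_2)\|_{L^\Phi}^{lux}.
\]
This proves that $y \mapsto \|F(\cdot + iy)\|_{L^\Phi}^{lux}$ is decreasing on $\mathbb{R}_+^*$.

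From the monotonicity just established, $\sup_{y > 0} \|F(\cdot + iy)\|_{L^\Phi}^{lux} = \lim_{y \to 0^+} \|F(\cdot + iy)\|_{L^\Phi}^{lux}$ is automatic, while the equality of this supremum with $\|F\|_{h^\Phi}^{lux}$ is simply the definition of the latter norm. There is no serious obstacle here; the only subtle point is recognizing that Lemma~\ref{pro:main6aapaqmq4k} provides exactly the convolution identity $F(\cdot + iy_1) = P_{y_1-y_2} \star F(\cdot + iy_2)$, after which the $L^{\Phi}$-contractivity of convolution against a Poisson kernel (a special case of the approximate-identity estimate) closes the argument without any further appeal to the $\Delta_2$-condition or to the $N$-function structure.
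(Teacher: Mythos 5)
Your argument is correct and is essentially the paper's own proof: the paper derives the corollary from Lemma \ref{pro:main6aapaqmq4k} together with Jensen's inequality, and you simply make the semigroup identity $F(\cdot+iy_1)=P_{y_1-y_2}\star F(\cdot+iy_2)$ explicit and route the Jensen step through the already-established contractivity in Theorem \ref{pro:main 3vqa2pm}(i). Your observation that neither the $\Delta_2$-condition nor the $N$-function structure is needed is also consistent with the stated hypotheses.
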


\begin{proof}
The proof  follows from Lemma \ref{pro:main6aapaqmq4k} and Jensen's inequality.
\end{proof}

We recall that the complex unit disk and the unit circle are respectively defined by
$$  \mathbb{D}:=\{ z\in \mathbb{C} : |z |<1     \} \hspace*{1cm}\text{and} \hspace*{1cm} \mathbb{T}:=\{ z\in \mathbb{C} : |z |=1     \}. 
     $$

Let $0<p<\infty$. The Hardy space on $\mathbb{D}$, $h^{p}(\mathbb{D})$ is the set of harmonic function $F$ on  $\mathbb{D}$ which satisfy 
$$  \|F\|_{h^{p}(\mathbb{D})}:=\sup_{0\leq r<1} \left( \frac{1}{2\pi}\int_{0}^{2\pi}|F(re^{i\theta})|^{p}d\theta \right)^{1/p}<\infty.$$
For $1\leq p<\infty$, $\left( h^{p}(\mathbb{D}),  \|.\|_{h^{p}(\mathbb{D})} \right)$ is a Banach space.

The Cayley transform is the map $\varphi$ defined by
$$   \varphi(\omega)= i\frac{1-\omega}{1+\omega},~~\forall~\omega \in \mathbb{D} \cup \mathbb{T}\backslash\{-1\}. 
    $$
Note that the restriction of $\varphi$ to $\mathbb{D}$ (resp. $\mathbb{T}\backslash\{-1\}$) is an analytic function on $\mathbb{D}$ with values in $\mathbb {C}_{+}$ (resp. a homeomorphism from $\mathbb{T}\backslash\{-1\}$ onto $\mathbb{R}$).

\begin{thm}\label{pro:main5Qpm5}
Let $\Phi$ be a convex growth function such that  $\Phi(t)>0$ for all  $t>0$.  For $F\in h^{\Phi}(\mathbb{C_{+}})$, the function $G$ defined by
$$   G(\omega)= F\left(i\frac{1-\omega}{1+\omega} \right), ~~\forall~\omega \in \mathbb{D},    $$
is in $h^{1}(\mathbb{D})$. Moreover,
$$  \|G\|_{h^{1}(\mathbb{D})} \leq \Phi^{-1}(1) \|F\|_{h^{\Phi}(\mathbb{C_{+}})}^{lux}.     $$
\end{thm}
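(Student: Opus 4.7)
The plan is to reduce to the bounded setting via a vertical translation of $F$, then exploit the Poisson representation on the disk together with Jensen's inequality applied to the probability measure $dt/(\pi(1+t^{2}))$ on $\mathbb{R}$. Concretely, for $\beta>0$ I set $F_{\beta}(z):=F(z+i\beta)$ and $G_{\beta}(\omega):=F_{\beta}(\varphi(\omega))=F(\varphi(\omega)+i\beta)$; Lemma~\ref{pro:main6aapaqmq4k} guarantees that $F_{\beta}$ is continuous and bounded on $\overline{\mathbb{C}_{+}}$, so that $G_{\beta}$ is harmonic and bounded on $\mathbb{D}$ and, since $\varphi(e^{i\phi})=\tan(\phi/2)$, extends continuously to $\overline{\mathbb{D}}\setminus\{-1\}$ with boundary trace $G_{\beta}^{*}(e^{i\phi})=F(\tan(\phi/2)+i\beta)$.

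Next, I invoke the Poisson representation of the bounded harmonic function $G_{\beta}$ on $\mathbb{D}$. Taking absolute values, integrating over $|\omega|=r$, swapping by Fubini, and using the standard identity $\tfrac{1}{2\pi}\int_{0}^{2\pi}\tfrac{1-r^{2}}{|e^{i\phi}-re^{i\theta}|^{2}}\,d\theta=1$ together with the change of variable $t=\tan(\phi/2)$ (so $d\phi=2\,dt/(1+t^{2})$), I obtain
$$\frac{1}{2\pi}\int_{0}^{2\pi}|G_{\beta}(re^{i\theta})|\,d\theta\leq\int_{\mathbb{R}}|F(t+i\beta)|\,\frac{dt}{\pi(1+t^{2})}.$$
Since $d\mu(t):=dt/(\pi(1+t^{2}))$ is a probability measure on $\mathbb{R}$ and $(\pi(1+t^{2}))^{-1}\leq\pi^{-1}$, Jensen's inequality applied with $\lambda:=\|F\|_{h^{\Phi}}^{lux}$ yields
$$\Phi\!\left(\frac{1}{\lambda}\int_{\mathbb{R}}|F(t+i\beta)|\,d\mu(t)\right)\leq\frac{1}{\pi}\int_{\mathbb{R}}\Phi\!\left(\frac{|F(t+i\beta)|}{\lambda}\right)dt\leq\frac{1}{\pi}\leq 1,$$
using $\|F(\cdot+i\beta)\|_{L^{\Phi}}^{lux}\leq\|F\|_{h^{\Phi}}^{lux}=\lambda$. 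Inverting $\Phi$ and combining with the previous display gives $\|G_{\beta}\|_{h^{1}(\mathbb{D})}\leq\Phi^{-1}(1)\|F\|_{h^{\Phi}}^{lux}$.

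To conclude, since $F$ is continuous on $\mathbb{C}_{+}$, $G_{\beta}(\omega)\to G(\omega)$ pointwise on $\mathbb{D}$ as $\beta\to 0$; Fatou's lemma on each circle integral, followed by the supremum over $r<1$, delivers the announced bound. The most delicate point I foresee is justifying the Poisson representation of the bounded harmonic function $G_{\beta}$ on $\mathbb{D}$; I would invoke it as a classical fact (any bounded harmonic function on the disk is the Poisson integral of its a.e.\ boundary values), or, to stay closer to the paper's techniques, replace it by the observation that $|G_{\beta}|$ is subharmonic on $\mathbb{D}$, so that the circle means $r\mapsto\tfrac{1}{2\pi}\int_{0}^{2\pi}|G_{\beta}(re^{i\theta})|\,d\theta$ are non-decreasing in $r$; dominated convergence as $r\to 1^{-}$ (using the uniform bound on $G_{\beta}$ from Proposition~\ref{pro:main6mq4}) then reduces the supremum to the same boundary integral.
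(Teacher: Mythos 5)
Your proof is correct, and it reaches the stated bound by a genuinely different route from the paper's. The paper also translates $F$ by $i\beta_{n}$, but it then applies Jensen inside the half-plane Poisson representation of Lemma~\ref{pro:main6aapaqmq4k} to obtain $\Phi\bigl(|F(z+i\beta_{n})|/\|F\|_{h^{\Phi}}^{lux}\bigr)\leq (P_{y}\star f_{n})(x)$ with $f_{n}:=\Phi\bigl(|F(\cdot+i\beta_{n})|/\|F\|_{h^{\Phi}}^{lux}\bigr)$ bounded in $L^{1}(\mathbb{R})$, extracts a weak-$*$ limit measure $\lambda$ with $\lambda(\mathbb{R})\leq 1$, transports it to a measure $\nu$ on $\mathbb{T}$ via the Cayley map, and only then applies Jensen a second time to the circle averages of $G$ itself. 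You instead keep everything at the level of the fixed translate: $G_{\beta}$ is bounded and harmonic on $\mathbb{D}$ with continuous boundary trace off $\{-1\}$, so its Poisson representation (or, as you note, merely the monotonicity of the circle means of the subharmonic function $|G_{\beta}|$ plus dominated convergence) reduces $\|G_{\beta}\|_{h^{1}(\mathbb{D})}$ to the boundary integral $\int_{\mathbb{R}}|F(t+i\beta)|\,dt/(\pi(1+t^{2}))$, and a single application of Jensen with the probability measure $dt/(\pi(1+t^{2}))$ together with the crude bound $(1+t^{2})^{-1}\leq 1$ yields the constant $\Phi^{-1}(1)$; Fatou in $\beta$ then finishes. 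This avoids the weak-$*$ compactness argument entirely and is, if anything, more elementary; what the paper's argument buys in exchange is the intermediate pointwise majorization $\Phi\bigl(|F(x+iy)|/\|F\|_{h^{\Phi}}^{lux}\bigr)\leq\int_{\mathbb{R}}P_{y}(x-t)\,d\lambda(t)$ for the untranslated $F$, i.e.\ a harmonic majorant of $\Phi(|F|/\|F\|_{h^{\Phi}}^{lux})$, which is a stronger structural statement than the $h^{1}$ bound alone. One small point worth making explicit in your write-up: Jensen's inequality in the form $\Phi(\int h\,d\mu)\leq\int\Phi(h)\,d\mu$ presupposes $\int h\,d\mu<\infty$, but this is immediate here since convexity with $\Phi(0)=0$ gives $\Phi(t)\geq\Phi(1)t$ for $t\geq 1$, or alternatively from the paper's inclusion $L^{\Phi}(\mathbb{R})\subseteq L^{1}\left(\frac{dt}{1+t^{2}}\right)$.
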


 \begin{proof}
Let $ 0\not\equiv F\in h^{\Phi}(\mathbb{C_{+}})$. Consider  a decreasing sequence $\{\beta_{n}\}_{n \in \mathbb{N}}$ of
positive reals such that $\lim_{n\to \infty}\beta_{n}=0$. \\
For $n \in \mathbb{N}$, consider $F_{n}$ the function defined by
$$ F_{n}(z)=F(z+i\beta_{n}),~~\forall~ z \in \mathbb{C}_{+}.     $$
Using Lemma \ref{pro:main6aapaqmq4k}  and  Jensen's inequality, we deduce that
\begin{equation}\label{eq:suiaq8n}
\Phi\left(\frac{|F(z+i\beta_{n})|}{\|F\|_{h^{\Phi}}^{lux}}\right)\leq\int_{\mathbb{R}}P_{y}(x-t)\Phi\left(\frac{|F(t+i\beta_{n})|}{\|F\|_{h^{\Phi}}^{lux}}\right)dt, ~~\forall~ z=x+iy \in \mathbb{C_{+}}.\end{equation}
Consider $f_{n}$ the function defined by 
$$    f_{n}(t)= \Phi\left(\frac{|F(t+i\beta_{n})|}{\|F\|_{h^{\Phi}}^{lux}}\right),  ~~ \forall~t \in \mathbb{R}.       $$
By construction the sequence $\{f_{n}\}_{n \in \mathbb{N}}$ is bounded in $ L^{1}(\mathbb{R})$. Indeed, 
$$ \int_{\mathbb{R}}|f_{n}(t)|dt =\int_{\mathbb{R}}\Phi\left(\frac{|F(t+i\beta_{n})|}{\|F\|_{h^{\Phi}}^{lux}}\right)dt \leq \sup_{y>0}\int_{\mathbb{R}}\Phi\left(\frac{|F(t+iy)|}{\|F\|_{h^{\Phi}}^{lux}}\right)dt \leq 1.        $$
Thus there exists $\{f_{n_{k}}\}_{k\in \mathbb{N}}$ a sub$-$sequence of $\{f_{n}\}_{n\in \mathbb{N}}$ and $\lambda$ a positive Borel measure on $\mathbb{R}$ such that 
 $\{f_{n_{k}}\}_{k\in \mathbb{N}}$  converges to $\lambda$ for the topology *$-$weak and 
 $$\lambda(\mathbb{R}) \leq \liminf_{k \to \infty}\|f_{n_{k}}\|_{L^{1}} \leq 1. $$
Since $P_{y} \in \mathcal{C}_{0}(\mathbb{R})$, by letting $k$ tend to infinity in the Relation (\ref{eq:suiaq8n}), we deduce  that 
$$  \Phi\left(\frac{|F(x+iy)|}{\|F\|_{h^{\Phi}}^{lux}}\right)\leq \int_{\mathbb{R}}P_{y}(x-t)d\lambda(t), ~~\forall~ x+iy \in \mathbb{C_{+}},
      $$
where $\mathcal{C}_{0}(\mathbb{R})$ is the set of function $f$ continuous on $\mathbb{R}$ such that  $\lim_{|t|\to \infty} f(t)=0$.\\
Put for $A \in \mathcal{B}(\mathbb{T}\backslash\{-1\})$,
$$    \nu(A)= \lambda\left(  \varphi(A) \right) \hspace*{0.5cm}\text{and} \hspace*{0.5cm}  \nu\left( \{-1\}\right)=0,    $$
where $\mathcal{B}(\mathbb{T}\backslash\{-1\})$ is the tribu of borelians on $\mathbb{T}\backslash\{-1\}$. Since $\lambda$ is a positive measure on $\mathbb{R}$ and $\varphi$ a homeomorphism from $\mathbb{T}\backslash\{-1\}$ onto $\mathbb{R} $, we deduce $\nu$ is a positive measure on $\mathbb{T}$. Moreover, 
$$ \nu(\mathbb{T})=\int_{\mathbb{T}\backslash\{-1\}}d\nu(e^{i\theta}) = \int_{\mathbb{R}} d\lambda(t)= \lambda(\mathbb{R}) \leq 1.   $$
Fix $0\leq r<1$. For  $\omega=re^{iu} \in \mathbb{D}$ and $z=x+iy\in \mathbb{C}_{+}$ such that $ z=i\frac{1-\omega}{1+\omega}$, we have 
\begin{align*}
\Phi\left(\frac{|G(re^{iu})|}{\|F\|_{h^{\Phi}}^{lux}}\right) &=  \Phi\left(\frac{|F\circ \varphi(re^{iu})|}{\|F\|_{h^{\Phi}}^{lux}}\right)  = \Phi\left(\frac{|F(x+iy)|}{\|F\|_{h^{\Phi}}^{lux}}\right)  \\
&\leq  \frac{1}{\pi}\int_{\mathbb{R}}\frac{y}{(x-t)^{2}+y^{2}}d\lambda(t) \leq  \frac{1}{\pi}\int_{\mathbb{R}}\frac{y}{(x-t)^{2}+y^{2}}(1+t^{2})d\lambda(t)     \\
&= \frac{1}{\pi}\int_{\mathbb{T}}\frac{1-r^{2}}{1-2r\cos(u-\theta)+r^{2}}d\nu(e^{i\theta}).
\end{align*}
It follows that
\begin{align*}
\Phi\left(\frac{1}{2\pi}\int_{-\pi}^{\pi}\frac{|G(re^{iu})|}{\|F\|_{h^{\Phi}}^{lux}}du \right)&\leq\frac{1}{2\pi}\int_{-\pi}^{\pi}\Phi\left(\frac{|G(re^{iu})|}{\|F\|_{h^{\Phi}}^{lux}}\right)du \\
 &\leq \frac{1}{2\pi}\int_{-\pi}^{\pi}\int_{\mathbb{T}}\frac{1-r^{2}}{1-2r\cos(u-\theta)+r^{2}}d\nu(e^{i\theta})du\\
&=\int_{\mathbb{T}}\left( \frac{1}{2\pi}\int_{-\pi}^{\pi}\frac{1-r^{2}}{1-2r\cos(u-\theta)+r^{2}}du  \right) d\nu(e^{i\theta})\\
&=\int_{\mathbb{T}}d\nu(e^{i\theta})= \nu(\mathbb{T}) \leq 1.
\end{align*}
\end{proof}

\begin{cor}\label{pro:main5Qpmpm5}
Let $\Phi$ be a convex growth function such that  $\Phi(t)>0$ for all  $t>0$.  For all $F\in H^{\Phi}(\mathbb{C_{+}})$, the function $G$ defined by
$$   G(\omega)= F\left(i\frac{1-\omega}{1+\omega} \right), ~~\forall~\omega \in \mathbb{D},    $$
is in $H^{1}(\mathbb{D})$. Moreover,
$$   \|G\|_{H^{1}(\mathbb{D})} \leq \Phi^{-1}(1) \|F\|_{H^{\Phi}(\mathbb{C_{+}})}^{lux},
    $$
where  $H^{1}(\mathbb{D})$ is the subspace of  $h^{1}(\mathbb{D})$ consisting of analytic functions on  $\mathbb{D}$.    
\end{cor}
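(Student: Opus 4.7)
The plan is to deduce this corollary directly from Theorem \ref{pro:main5Qpm5} together with the elementary observation that analyticity is preserved under composition with the Cayley transform. Since $H^{\Phi}(\mathbb{C}_{+})$ is defined as the subspace of $h^{\Phi}(\mathbb{C}_{+})$ consisting of analytic functions, and $H^{1}(\mathbb{D})$ is analogously defined as the subspace of $h^{1}(\mathbb{D})$ consisting of analytic functions on $\mathbb{D}$, all one needs is (i) membership in $h^{1}(\mathbb{D})$ with the stated norm bound and (ii) analyticity of $G$ on $\mathbb{D}$.

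First I would note that any $F\in H^{\Phi}(\mathbb{C}_{+})$ belongs in particular to $h^{\Phi}(\mathbb{C}_{+})$, so Theorem \ref{pro:main5Qpm5} applies verbatim and yields
$$ G\in h^{1}(\mathbb{D}) \qquad\text{with}\qquad \|G\|_{h^{1}(\mathbb{D})} \leq \Phi^{-1}(1)\,\|F\|_{h^{\Phi}(\mathbb{C}_{+})}^{lux} = \Phi^{-1}(1)\,\|F\|_{H^{\Phi}(\mathbb{C}_{+})}^{lux}, $$
where the last equality is just the definition of $\|\cdot\|_{H^{\Phi}}^{lux}$ as the restriction of $\|\cdot\|_{h^{\Phi}}^{lux}$.

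Next I would verify analyticity of $G$. The Cayley transform $\varphi(\omega)=i\frac{1-\omega}{1+\omega}$ is, as recalled in the excerpt just before Theorem \ref{pro:main5Qpm5}, an analytic map from $\mathbb{D}$ into $\mathbb{C}_{+}$. Since $F$ is analytic on $\mathbb{C}_{+}$, the composition $G=F\circ\varphi$ is analytic on $\mathbb{D}$ by the chain rule. Combined with $G\in h^{1}(\mathbb{D})$, this places $G$ in $H^{1}(\mathbb{D})$, and the norm inequality transfers unchanged (the $H^1$ norm coincides with the $h^1$ norm on analytic functions).

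There is no real obstacle here: the theorem has done the analytic work, and the corollary is a one-line reduction. The only thing to be careful about is the tautological identification of the $H^{\Phi}$ and $h^{\Phi}$ luxemburg norms on elements of $H^{\Phi}(\mathbb{C}_{+})$, and the parallel identification for $H^{1}(\mathbb{D})$ versus $h^{1}(\mathbb{D})$, both of which are immediate from the definitions given earlier in the paper.
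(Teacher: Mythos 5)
Your proposal is correct and is exactly the argument the paper intends (the paper omits the proof of this corollary as immediate from Theorem \ref{pro:main5Qpm5}): apply the theorem to $F$ viewed as an element of $h^{\Phi}(\mathbb{C}_{+})$, and observe that $G=F\circ\varphi$ is analytic because the Cayley transform maps $\mathbb{D}$ analytically into $\mathbb{C}_{+}$. The norm identifications you flag are indeed tautological from the definitions, so nothing further is needed.
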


\subsection{Control of some maximal operators.} 

Let  $E$ be a measurable set of $\mathbb{R}$. We denote by
$$ |E|:=\int_{E}dx.    $$

For $\beta \in \left\{0; 1/3\right\}$, we call interval $\beta-$dyadic any interval $I$ of $\mathbb{R}$ of the form
 $$   2^{-j}(\lbrack 0,1) +k+(-1)^{j}\beta),    $$
 where $k,j\in \mathbb{Z}$. We put $\mathcal{D}_{j}^{\beta}:=\cup_{k \in \mathbb{Z}}[2^{-j}(\lbrack 0,1) +k+(-1)^{j}\beta)]$, for all $j\in \mathbb{Z}$
  and $\mathcal{D}^{\beta}:=\bigcup_{j\in \mathbb{Z}}\mathcal{D}_{j}^{\beta}$.
 We have the following properties (see for example \cite{DVCruzJMMartell},\cite{Spmcregu},\cite{Emstshae}):
 \begin{itemize}
 \item[-] for all $I,J\in \mathcal{D}^{\beta}$, we have $I\cap J \in \left\{\emptyset ; I; J\right\}$,
 \item[-] for each fixed  $j\in \mathbb{Z}$, if $I \in \mathcal{D}_{j}^{\beta}$ then there exists a unique $J \in \mathcal{D}_{j-1}^{\beta}$ such that  $I\subset J$,
 \item[-] for each fixed $j\in \mathbb{Z}$, if $I \in \mathcal{D}_{j}^{\beta}$ then there exists $I_{1}, I_{2} \in \mathcal{D}_{j+1}^{\beta}$ such that
  $I=I_{1}\cup I_{2}$ and $I_{1}\cap I_{2}=\emptyset$.
 \end{itemize}

We can find the following result in \cite{jmtafeuseh}.
 
 \begin{lem}\label{pro:main80}
 Let $I$ be an interval.  There exists    $\beta \in \left\{0, 1/3\right\}$ and $J \in \mathcal{D}^{\beta}$ such that $I\subset J $ and $|J|\leq 6|I|$.
 \end{lem}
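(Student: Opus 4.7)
The plan is to apply the classical \emph{one-third trick} for dyadic grids. The shifted grid $\mathcal{D}^{1/3}$ is designed precisely so that its scale-$j$ endpoints sit at distance $2^{-j}/3$ from the standard endpoints of $\mathcal{D}^{0}$; consequently, no point of $\mathbb{R}$ is farther than $2^{-j}/3$ from some grid point of $\mathcal{D}_j^0$ that is also an endpoint of an interval in $\mathcal{D}_j^{1/3}$ located on the ``good side.''

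First I would fix the scale. Assuming $\ell := |I| > 0$, pick the unique $j \in \mathbb{Z}$ with $3\ell \leq 2^{-j} < 6\ell$; uniqueness is automatic because $[3\ell,6\ell)$ has length ratio $2$. Any $J \in \mathcal{D}_{j}^{\beta}$ then satisfies $|J| = 2^{-j} < 6\ell = 6|I|$, which settles the size requirement.

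Next I argue by cases on whether $I$ already fits inside the standard grid. If there exists $k \in \mathbb{Z}$ with $I \subset [2^{-j}k, 2^{-j}(k+1)) \in \mathcal{D}_{j}^{0}$, the lemma holds with $\beta = 0$. Otherwise, since $\ell < 2^{-j}$, $I$ must straddle a \emph{unique} endpoint $p := 2^{-j} k_{0}$ of $\mathcal{D}_{j}^{0}$, and the bound $\ell \leq 2^{-j}/3$ confines $I$ to a neighborhood of $p$ of radius $2^{-j}/3$. By construction, the endpoints of $\mathcal{D}_{j}^{1/3}$ are $2^{-j}(k + (-1)^{j}/3)$ for $k \in \mathbb{Z}$, so one of the points $p \pm 2^{-j}/3$ is an endpoint of $\mathcal{D}_{j}^{1/3}$. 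Choosing the appropriate $k$ (the sign depending on the parity of $j$, which toggles $(-1)^j$) yields an interval of $\mathcal{D}_{j}^{1/3}$ of length $2^{-j}$ whose endpoint is $p - 2^{-j}/3$ (or $p + 2^{-j}/3$) and which therefore contains the $2^{-j}/3$-neighborhood of $p$, hence contains $I$. This gives the conclusion with $\beta = 1/3$.

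The only delicate point is a routine split on the parity of $j$ (to pick the sign of the one-third shift in the correct direction) together with some endpoint bookkeeping between the half-open intervals of $\mathcal{D}^{\beta}$ and whatever convention $I$ comes with; none of this constitutes a real obstacle, since the $2^{-j}/3$ margin comfortably absorbs the open/closed distinctions.
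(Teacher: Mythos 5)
Your proof is correct: the choice of scale $3|I|\leq 2^{-j}<6|I|$ gives $|J|=2^{-j}<6|I|$, and when $I$ straddles a unique endpoint $p$ of $\mathcal{D}_j^{0}$ one checks $I\subseteq[p-2^{-j}/3,\,p+2^{-j}/3)$, which lies inside the interval of $\mathcal{D}_j^{1/3}$ adjacent to the shifted endpoint $p\pm 2^{-j}/3$ whichever sign the parity of $j$ produces. The paper does not prove this lemma (it cites an external reference), but your argument is the standard ``one-third trick'' that such references use, so there is nothing further to reconcile.
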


Let $f$ be a measurable function on $\mathbb{R}$. We define the maximal Hardy-Littlewood function,  $\mathcal{M}_{HL}(f)$ of $f$ by
 $$  \mathcal{M}_{HL}(f)(x):=\sup_{I \subset \mathbb{R} }\frac{\chi_{I}(x)}{|I|}\int_{I}| f(t)| dt, ~~\forall~ x \in \mathbb{R},      $$
 where the supremum is taken over all intervals of $\mathbb{R}$. Similarly, for  $\beta \in \left\{0; 1/3\right\}$, we define its dyadic version $\mathcal{M}^{\mathcal{D}^{\beta}}_{HL}(f)$,  where the supremum is taken this time on the intervals contained in $\mathcal{D}^{\beta}$. We have
 \begin{equation}\label{eq:of895ua1l}
  \mathcal{M}_{HL}(f) \leq 6 \sum_{\beta \in \{0; 1/3\} }   \mathcal{M}^{\mathcal{D}^{\beta}}_{HL}(f).
 \end{equation}

For  $\beta \in \left\{0, 1/3\right\}$,  $\lambda > 0$ and  $f$ be a measurable function on $\mathbb{R}$,  consider $\mathcal{D}_{\lambda}(f)$ (resp. $\mathcal{D}_{\lambda}^{*}(f)$) the set of intervals $I$,  $\beta-$dyadic (resp. maximal for the inclusion) satisfying the relation
$$  \frac{1}{|I|}\int_{I}|f(y)| dy > \lambda.  $$
If $\mathcal{D}_{\lambda}^{*}(f)\not=\emptyset$ then its elements are pairwise disjoint. Moreover, each
element of $\mathcal{D}_{\lambda}(f)$ is contained in an element of $\mathcal{D}_{\lambda}^{*}(f)$.
If $f\in L^{1}_{loc}(\mathbb{R})$ and $\frac{1}{|I|}\int_{I}|f(y)| dy \longrightarrow 0$ when $|I| \longrightarrow \infty$ then $\mathcal{D}_{\lambda}^{*}(f)\not=\emptyset$ whenever $\mathcal{D}_{\lambda}(f)\not=\emptyset$. Moreover,
$$  \left\{x\in \mathbb{R} : \mathcal{M}^{\mathcal{D}^{\beta}}_{HL}(f)(x)> \lambda \right\} = \bigcup_{I \in \mathcal{D}_{\lambda}^{*}(f)}I 
       $$
and 
$$  \lambda <  \frac{1}{|I|}\int_{I}|f(y)| dy     \leq 2 \lambda, ~~ \forall~I \in \mathcal{D}_{\lambda}^{*}(f).    $$
We have the following inequality
\begin{equation}\label{eq:foaaqimal}
\frac{1}{\lambda}\int_{\{x\in \mathbb{R}~ :~ |f(x)|> \lambda \}}|f(x)|dx  \leq \left|\left\{x\in \mathbb{R} : \mathcal{M}_{HL}(f)(x)> \frac{\lambda}{12} \right\}\right|. 
\end{equation}

Let $\alpha \geq 0$ and  $f$  a measurable function on $\mathbb{R}$. If  $f\in L^{1}\left(\frac{dt}{1+t^{2}}\right)$ then we have, 
\begin{equation}\label{eq:claqis}
 \frac{1}{2\pi}\mathcal{M}_{HL}(f)(x) \leq \mathcal{M}_{rad}(U_{f})(x) \leq \mathcal{M}_{ntg}^{\alpha}(U_{f})(x) \leq \left( 1+ \frac{2 \alpha}{\pi} \right) \mathcal{M}_{HL}(f)(x), ~~\forall~ x \in \mathbb{R}.
\end{equation}
Moreover, if $f\in L^{1}\left(  \frac{dt}{1+|t|}  \right) \cap L^{1}\left(\frac{dt}{1+t^{2}}\right)$ then we have also,
\begin{equation}\label{eq:soaqn}
\mathcal{\widetilde{H}}(f)(x)  \leq \left( 1+ 1/\pi \right)\mathcal{M}_{HL}(f)(x) + \mathcal{M}_{rad}(V_{f})(x), ~~ \forall~x \in \mathbb{R},\end{equation}
(see. \cite{javadmas}).

\begin{thm}\label{pro:main 5dmpmw3pl}
  Let  $\alpha \geq 0$ and $\Phi$  an  N-function such that $\Phi \in \Delta_{2}$. The following assertions are equivalent.
\begin{itemize}
 \item[(i)] $\Phi\in \nabla_{2}$.
 \item[(ii)] There exists a constant $C_{1}>0$ such that for all $f\in L^{\Phi}(\mathbb{R})$,   
   \begin{equation}\label{eq:cjlaqaaqqis}
   \|\mathcal{M}_{HL}(f)\|_{L^{\Phi}}^{lux}  \leq C_{1}\|f\|_{L^{\Phi}}^{lux}.
   \end{equation}
 \item[(iii)] There exists a constant $C_{2}>0$ such that for all $f\in L^{\Phi}(\mathbb{R})$,   
  \begin{equation}\label{eq:cjlaaqqis}
\|f\|_{L^{\Phi}}^{lux} \leq \|\mathcal{M}_{rad}(U_{f})\|_{L^{\Phi}}^{lux} \leq\|\mathcal{M}_{ntg}^{\alpha}(U_{f})\|_{L^{\Phi}}^{lux}  \leq C_{2}\|f\|_{L^{\Phi}}^{lux}.
  \end{equation}
 \item[(iv)] There exists a constant $C_{3}>0$ such that for all $f\in L^{\Phi}(\mathbb{R})$,   
  \begin{equation}\label{eq:cotjplaqlpmis}
  \|\mathcal{\widetilde{H}}(f)\|_{L^{\Phi}}^{lux} \leq C_{3}\|f\|_{L^{\Phi}}^{lux}.
  \end{equation}
 \end{itemize}
 \end{thm}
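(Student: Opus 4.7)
My plan is to establish the equivalences along the scheme $(i)\Rightarrow(ii)\Leftrightarrow(iii)$, $(i)\Rightarrow(iv)$, together with the two reverse implications $(ii)\Rightarrow(i)$ and $(iv)\Rightarrow(i)$. The pointwise chain (\ref{eq:claqis}) makes $(ii)\Leftrightarrow(iii)$ essentially automatic, while Theorem \ref{pro:main11pqa6} combined with the dominance (\ref{eq:soaqn}) supplies the bridge from $(i)$ to $(iv)$. The two reverse implications will rest on the classical Orlicz-space characterizations of the boundedness of $\mathcal{M}_{HL}$ and of $\mathcal{H}$ on $L^\Phi$.

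\textbf{Forward direction.} For $(i)\Rightarrow(ii)$ I would run the Orlicz version of the standard maximal-function argument. From the weak-type $(1,1)$ bound $|\{\mathcal{M}_{HL}(f)>\lambda\}|\leq \frac{C}{\lambda}\int_{\{|f|>\lambda/2\}}|f|\,dx$ (obtained by splitting $f$ at height $\lambda/2$), I apply the layer-cake identity to $\int\Phi(\mathcal{M}_{HL}(f)/\lambda_0)\,dx$, swap the order of integration by Fubini, and collapse the inner integral via $\int_0^{T}\Phi'(s)/s\,ds=\Phi(T)/T+\int_0^T\Phi(s)/s^2\,ds$ with $T=2|f(x)|/\lambda_0$; the $\nabla_2$ characterization (\ref{eq:saaui8n}) bounds the second summand by a constant multiple of $\Phi(T)/T$, so the whole inner integral is $\lesssim \Phi(T)/T$. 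Choosing $\lambda_0$ proportional to $\|f\|_{L^\Phi}^{lux}$ then closes the estimate. The equivalence $(ii)\Leftrightarrow(iii)$ is immediate from (\ref{eq:claqis}); the lower bound $\|f\|_{L^\Phi}^{lux}\leq\|\mathcal{M}_{rad}(U_f)\|_{L^\Phi}^{lux}$ follows from $|f(x)|\leq\mathcal{M}_{rad}(U_f)(x)$ a.e., obtained by letting $y\to 0$ in the Poisson integral. Finally, for $(i)\Rightarrow(iv)$, Theorem \ref{pro:main11pqa6} supplies both $V_f=U_{\mathcal{H}(f)}$ and $\|\mathcal{H}(f)\|_{L^\Phi}^{lux}\lesssim\|f\|_{L^\Phi}^{lux}$; inserting these into (\ref{eq:soaqn}) yields $\mathcal{\widetilde{H}}(f)\leq(1+1/\pi)\mathcal{M}_{HL}(f)+\mathcal{M}_{rad}(U_{\mathcal{H}(f)})$, and both summands are $L^\Phi$-controlled by $(ii)$ and by $(iii)$ applied to $\mathcal{H}(f)$, respectively.

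\textbf{Reverse direction and main obstacle.} For $(ii)\Rightarrow(i)$ and $(iv)\Rightarrow(i)$ I note that $|\mathcal{H}(f)|\leq\mathcal{\widetilde{H}}(f)$ reduces the second to showing that $L^\Phi$-boundedness of $\mathcal{H}$ implies $\Phi\in\nabla_2$. Each reverse implication is the converse of a classical boundedness theorem on Orlicz spaces for N-functions in $\Delta_2$; I would carry it out by contraposition, showing that if $\Phi\notin\nabla_2$ (so $a_\Phi=1$, since $\Phi\in\Delta_2$ already forces $b_\Phi<\infty$) then suitably rescaled indicator functions $f_n=\chi_{I_n}$, with interval lengths $|I_n|$ adapted to the scales $t_n$ along which (\ref{eq:saaui8n}) degenerates, witness $\|\mathcal{M}_{HL}(f_n)\|_{L^\Phi}^{lux}/\|f_n\|_{L^\Phi}^{lux}\to\infty$. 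The main obstacle is exactly this last step: a naive test with $\chi_{[0,1]}$ only delivers the local integrability $\int_0^c\Phi(t)/t^2\,dt<\infty$, which, as the example $\Phi(t)=t\log(1+t)$ (an N-function in $\Delta_2$ with $a_\Phi=1$) shows, does \emph{not} characterize $\Phi\in\nabla_2$; one must probe every scale $t>0$ in (\ref{eq:saaui8n}) via the scaling symmetry of $\mathcal{M}_{HL}$ and $\mathcal{H}$ to extract the quantitative lower-index bound $a_\Phi>1$ recalled in item (ii) at the start of Section III.
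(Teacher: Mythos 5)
Your treatment of $(i)\Rightarrow(ii)$, $(ii)\Leftrightarrow(iii)$ and $(i)\Rightarrow(iv)$ is essentially the paper's: the layer-cake argument with the truncated weak $(1,1)$ bound and the Dini condition (\ref{eq:saaui8n}) is exactly how \cite[Proposition 3.6]{djesehb} (invoked via Proposition \ref{pro:mainplaqaqaq6} and Lemma \ref{pro:mainplaaqq6}) proves the maximal inequality; the pointwise chain (\ref{eq:claqis}) gives $(ii)\Leftrightarrow(iii)$; and (\ref{eq:soaqn}) together with Theorem \ref{pro:main11pqa6} gives $(i)\Rightarrow(iv)$. Your counterexample for $(ii)\Rightarrow(i)$ (scaled indicators $2^k t_k\chi_{I_k}$ on intervals of length $1/(2^k\Phi(2^k t_k))$ at the scales where (\ref{eq:saaui8n}) fails) is also the construction in the paper's proof of Lemma \ref{pro:mainplaaqq6}.

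The genuine gap is in $(iv)\Rightarrow(i)$. After reducing via $|\mathcal{H}(f)|\leq\mathcal{\widetilde{H}}(f)$, you propose to refute $\nabla_{2}$-failure with test functions that "witness $\|\mathcal{M}_{HL}(f_n)\|_{L^{\Phi}}^{lux}/\|f_n\|_{L^{\Phi}}^{lux}\to\infty$" — but that only disproves $(ii)$, not $(iv)$. You never supply the mechanism that converts a lower bound on the maximal function into a lower bound on $\mathcal{\widetilde{H}}$ (there is no analogue of the reverse weak-type inequality (\ref{eq:foaaqimal}) for the Hilbert transform in your sketch, and the na\"ive computation of $\mathcal{H}(\chi_I)$ only yields bounded logarithmic bumps). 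The paper's bridge is a localization step: if $\Phi\notin\nabla_{2}$, take $f\in L^{\Phi}(\mathbb{R})$ with $\mathcal{M}_{HL}^{\mathcal{D}^{\beta}}(f)\notin L^{\Phi}(\mathbb{R})$, perform the Calder\'on--Zygmund decomposition at each level $\lambda$ into maximal dyadic intervals $I_{j}$ with $\lambda<\frac{1}{|I_{j}|}\int_{I_{j}}|f|\leq 2\lambda$, and observe that for $x$ in the adjacent interval $I_{j}'$ of the same length one has
\begin{equation*}
\mathcal{\widetilde{H}}\bigl(|f\chi_{I_{j}}|\bigr)(x)\;\geq\;\frac{1}{2\pi}\,\frac{1}{|I_{j}|}\int_{I_{j}}|f(t)|\,dt\;>\;\frac{\lambda}{2\pi},
\end{equation*}
so that $|I_{j}|=|I_{j}'|\leq\bigl|\{\mathcal{\widetilde{H}}(|f\chi_{I_{j}}|)>\lambda/(2\pi)\}\bigr|$. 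Summing over $j$, integrating against $\Phi'(\lambda)\,d\lambda$, and using the modular form of $(iv)$ on each localized piece $f\chi_{I_{j}}$ yields $\int_{\mathbb{R}}\Phi(\mathcal{M}_{HL}^{\mathcal{D}^{\beta}}(f))\lesssim\int_{\mathbb{R}}\Phi(|f|)<\infty$, contradicting the choice of $f$. Without this pointwise domination of the dyadic averages by $\mathcal{\widetilde{H}}$ of localized pieces (or an explicit appeal to the known characterization of Hilbert-transform boundedness on $L^{\Phi}$, e.g.\ \cite{kokokrbec}), your $(iv)\Rightarrow(i)$ does not close.
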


For the proof of Theorem \ref{pro:main 5dmpmw3pl}, we need the following results.

\begin{lem}\label{pro:mainplaaqq6}
Let  $\Phi_{1}, \Phi_{2} \in \mathscr{U}$. The following assertions are equivalent.
 \begin{itemize}
\item[(i)] There exists a constant $C_{1}>0$ such that for all $t>0$, 
\begin{equation}\label{eq:conditdedinis}
\int_0^t\frac{\Phi_{2}(s)}{s^{2}}ds\le C_{1}\frac{\Phi_{1}(t)}{t}.\end{equation}
\item[(ii)] There exists a constant $C_{2}>0$ such that for all $f\in L^{\Phi_{1}}(\mathbb{R})$, 
\begin{equation}\label{eq:fonciniaqmal}
\|\mathcal{M}_{HL}(f)\|_{L^{\Phi_{2}}}^{lux}  \leq C_{2}\|f\|_{L^{\Phi_{1}}}^{lux}.
\end{equation}
\item[(iii)] There exists a constant $C_{3}>0$ such that for all $f\in L^{\Phi_{1}}(\mathbb{R})$   
  \begin{equation}\label{eq:cjlaaqqis}
\|f\|_{L^{\Phi_{2}}}^{lux} \leq \|\mathcal{M}_{rad}(U_{f})\|_{L^{\Phi_{2}}}^{lux} \leq\|\mathcal{M}_{ntg}^{\alpha}(U_{f})\|_{L^{\Phi_{2}}}^{lux}  \leq C_{3}\|f\|_{L^{\Phi_{1}}}^{lux}.
  \end{equation}
 \end{itemize}
\end{lem}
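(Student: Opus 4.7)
My plan is to prove the three-way equivalence by first handling (ii) $\Leftrightarrow$ (iii), which falls out of the already-established pointwise maximal comparisons, and then focusing on the substantive equivalence (i) $\Leftrightarrow$ (ii). For (ii) $\Leftrightarrow$ (iii) I would invoke the sandwich (\ref{eq:claqis}): the right half there, together with $\mathcal{M}_{rad}(U_f)\leq \mathcal{M}_{ntg}^{\alpha}(U_f)$, promotes (ii) to the rightmost two inequalities of (iii), while the left half $\mathcal{M}_{HL}(f)/(2\pi)\leq \mathcal{M}_{rad}(U_f)$ recovers (ii) from (iii). The first inequality of (iii), $\|f\|_{L^{\Phi_2}}^{lux}\leq \|\mathcal{M}_{rad}(U_f)\|_{L^{\Phi_2}}^{lux}$, is immediate from the a.e.\ bound $|f(x)|=\lim_{y\to 0}|U_f(x+iy)|\leq \mathcal{M}_{rad}(U_f)(x)$ and the monotonicity of $\Phi_2$, exactly as in the second half of the proof of Lemma~\ref{pro:main 3aqqq2}.

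For (i) $\Rightarrow$ (ii) I would run the classical distribution/layer-cake argument. The Calder\'on--Zygmund decomposition preceding (\ref{eq:foaaqimal}), combined with a standard truncation $f=f\chi_{\{|f|>c\lambda\}}+f\chi_{\{|f|\leq c\lambda\}}$, yields the weak-$(1,1)$ inequality $|\{\mathcal{M}_{HL}(f)>\lambda\}|\leq C\lambda^{-1}\int_{\{|f|>c\lambda\}}|f(x)|\,dx$. Expanding $\int\Phi_2(\mathcal{M}_{HL}(f)/M)\,dx$ via layer cake, inserting this bound and swapping integrals with Fubini reduces matters to controlling $\int_0^T \Phi_2'(u)/u\,du$. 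Integration by parts combined with the monotonicity of $t\mapsto\Phi_2(t)/t$ (valid since $\Phi_2\in\mathscr{U}$) gives $\int_0^T\Phi_2'(u)/u\,du\leq \Phi_2(T)/T+\int_0^T\Phi_2(u)/u^2\,du$, and hypothesis (i) together with the $\Delta_2$ property of $\Phi_1$ bounds both summands by $C'\Phi_1(T)/T$. Taking $M$ proportional to $\|f\|_{L^{\Phi_1}}^{lux}$ and using $\Delta_2$ on $\Phi_2$ to absorb the residual multiplicative constant turns this modular estimate into the Luxemburg norm inequality (ii).

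For (ii) $\Rightarrow$ (i) I would test (ii) on $f_t=\chi_{[-t,t]}$, for which $\|f_t\|_{L^{\Phi_1}}^{lux}=1/\Phi_1^{-1}(1/(2t))$ and $\mathcal{M}_{HL}(f_t)(x)\geq t/(t+|x|)$. Plugging this bound into the modular of $\mathcal{M}_{HL}(f_t)/M$ and performing the successive changes of variable $u=t/(t+|x|)$ and $\tau=u\Phi_1^{-1}(1/(2t))/M$ reduces the modular to a constant multiple of $\int_0^T\Phi_2(\tau)/\tau^2\,d\tau$ with $T$ proportional to $\Phi_1^{-1}(1/(2t))$. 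Choosing $M$ equal to the upper bound from (ii) rearranges this to $\int_0^T\Phi_2(\tau)/\tau^2\,d\tau \leq C\Phi_1(T)/T$ (after one more application of $\Delta_2$ on $\Phi_1$), and since $\Phi_1\in\mathscr{U}$ is a homeomorphism of $\mathbb{R}_+$, $T$ sweeps all of $(0,\infty)$ as $t$ varies, giving (i).

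The main obstacle I foresee is the constant-tracking in the two substantive implications: the Luxemburg norm being defined by an infimum, each step requires converting modular estimates into norm estimates and back while keeping the constant of (i), the two $\Delta_2$ constants, the weak-$(1,1)$ constant, and the scaling factors from two successive changes of variable in consistent shape. Each individual estimate is standard, but coordinating them so that the final inequality has the correct homogeneity --- especially in the test-function step, where one must ensure that $T=\Phi_1^{-1}(1/(2t))$ really sweeps the whole positive axis --- is the most delicate part.
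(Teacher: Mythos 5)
Your proposal is correct, and two of its three parts coincide with the paper's: the equivalence (ii) $\Leftrightarrow$ (iii) via the pointwise sandwich (\ref{eq:claqis}) together with the Fatou-type lower bound $\|f\|_{L^{\Phi_2}}^{lux}\leq\|\mathcal{M}_{rad}(U_f)\|_{L^{\Phi_2}}^{lux}$ is exactly what the paper does, and your layer-cake proof of (i) $\Rightarrow$ (ii) is the standard argument that the paper simply outsources to a citation. Where you genuinely diverge is in (ii) $\Rightarrow$ (i): the paper argues by contraposition, extracting from the failure of (\ref{eq:conditdedinis}) a sequence of scales $t_k$ with $\int_0^{2^k t_k}\Phi_2(s)s^{-2}ds\geq 2^k t_k^{-1}\Phi_1(2^k t_k)$, building disjoint bumps $f_k=2^k t_k\chi_{I_k}$ on intervals of carefully chosen lengths, and using the distributional lower bound (\ref{eq:foaaqimal}) to show that the superposition $f=\sum_k 12f_k$ lies in $L^{\Phi_1}(\mathbb{R})$ while $\mathcal{M}_{HL}(f)\notin L^{\Phi_2}(\mathbb{R})$. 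You instead test (ii) directly on the one-parameter family $\chi_{[-t,t]}$, use the elementary pointwise bound $\mathcal{M}_{HL}(\chi_{[-t,t]})(x)\geq t/(t+|x|)$, and read off the Dini condition by two changes of variable; since $\Phi_1\in\mathscr{U}$ is a homeomorphism of $\mathbb{R}_+$, the parameter $T\approx\Phi_1^{-1}(1/(2t))$ indeed sweeps all of $(0,\infty)$, so nothing is lost. Your route is shorter, avoids the weak-type lower bound (\ref{eq:foaaqimal}) entirely, and has the additional merit of being quantitative --- it exhibits $C_1$ explicitly as a function of $C_2$ and the upper-type constant of $\Phi_1$ --- whereas the paper's construction only yields the implication; the paper's approach, on the other hand, produces an explicit single function witnessing unboundedness, which is what its later contradiction argument in Theorem \ref{pro:main 5dmpmw3pl} reuses. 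Both are sound; just be aware that the constant bookkeeping you flag as the delicate point is precisely where the two successive applications of the upper-type inequality (to pass from $\Phi_1(C_2T)$ to $\Phi_1(T)$, and to absorb the factor $c$ in the truncation) must be recorded.
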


\begin{prop}\label{pro:mainplaqaqaq6}
Let $\Phi$ be an  N-function such that $\Phi \in \Delta_{2}$. Then the following assertions are equivalent.
\begin{itemize}
\item[(i)] $\Phi \in \nabla_{2}$.
\item[(ii)] $\mathcal{M}_{HL}: L^{\Phi}(\mathbb{R})\longrightarrow L^{\Phi}(\mathbb{R})$ is bounded.
\item[(iii)] $\mathcal{M}^{\mathcal{D}^{\beta}}_{HL}: L^{\Phi}(\mathbb{R})\longrightarrow L^{\Phi}(\mathbb{R})$ is bounded, for all  $\beta \in \{0; 1/3\}$.
\item[(iv)] There exists a constant $C>0$ such that for all $f\in L^{\Phi}(\mathbb{R})$   
  \begin{equation}\label{eq:cjlaaqqis}
\|f\|_{L^{\Phi}}^{lux} \leq \|\mathcal{M}_{rad}(U_{f})\|_{L^{\Phi}}^{lux} \leq\|\mathcal{M}_{ntg}^{\alpha}(U_{f})\|_{L^{\Phi}}^{lux}  \leq C\|f\|_{L^{\Phi}}^{lux}.
  \end{equation}
\end{itemize}
\end{prop}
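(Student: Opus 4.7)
The plan is to derive this proposition as the diagonal specialization of Lemma \ref{pro:mainplaaqq6}, supplemented by the standard dyadic decomposition of the Hardy--Littlewood maximal operator. All substantive analytic work is already carried out in that lemma; here one only needs to identify the natural intrinsic condition ($\nabla_{2}$) that makes its hypothesis applicable when $\Phi_{1}=\Phi_{2}=\Phi$.

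\emph{Preliminary reduction.} Since $\Phi$ is an $N$-function, it is convex and, by our standing assumption in Section 2, belongs to $\mathscr{C}^{1}(\mathbb{R}_{+})$. Combined with $\Phi \in \Delta_{2}$, property (i) of Section 3.1 yields $\Phi \in \mathscr{U}$. In particular Lemma \ref{pro:mainplaaqq6} is available with $\Phi_{1}=\Phi_{2}=\Phi$.

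\emph{Step 1: (i) $\Leftrightarrow$ (ii) $\Leftrightarrow$ (iv).} By the characterization (\ref{eq:saaui8n}), for $\Phi \in \mathscr{U}$ an $N$-function the condition $\Phi \in \nabla_{2}$ is equivalent to
$$\int_{0}^{t}\frac{\Phi(s)}{s^{2}}\,ds \;\leq\; C\,\frac{\Phi(t)}{t},\qquad \forall\,t>0,$$
which is precisely condition (\ref{eq:conditdedinis}) of Lemma \ref{pro:mainplaaqq6} in the diagonal case. The three-way equivalence of that lemma then delivers (i) $\Leftrightarrow$ (ii) $\Leftrightarrow$ (iv) in one stroke.

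\emph{Step 2: (ii) $\Leftrightarrow$ (iii).} Every $\beta$-dyadic interval is an interval of $\mathbb{R}$, so $\mathcal{M}^{\mathcal{D}^{\beta}}_{HL}(f)(x) \leq \mathcal{M}_{HL}(f)(x)$ pointwise, giving (ii) $\Rightarrow$ (iii). The converse uses the pointwise estimate (\ref{eq:of895ua1l}),
$$\mathcal{M}_{HL}(f) \;\leq\; 6 \sum_{\beta \in \{0,\,1/3\}} \mathcal{M}^{\mathcal{D}^{\beta}}_{HL}(f),$$
together with the triangle inequality for the Luxemburg norm on $L^{\Phi}(\mathbb{R})$, which is valid because $\Phi$ is a convex $N$-function.

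\emph{Principal difficulty.} The genuine analytic content (a Calder\'on--Zygmund-type good-$\lambda$ argument relating the integral criterion to the boundedness of $\mathcal{M}_{HL}$, together with the passage via (\ref{eq:claqis}) from the Hardy--Littlewood maximal to the radial and non-tangential Poisson maximal operators) is housed inside Lemma \ref{pro:mainplaaqq6}. Relative to that lemma, the present proposition is essentially algebraic, matching the intrinsic $\nabla_{2}$ condition with the integral criterion valid on $\mathscr{U}$; I therefore expect no further obstacle.
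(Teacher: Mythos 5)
Your proposal is correct and follows exactly the same route as the paper, whose proof is the one-line remark that the result follows from Relation (\ref{eq:saaui8n}) and Lemma \ref{pro:mainplaaqq6}; you simply spell out the diagonal specialization $\Phi_{1}=\Phi_{2}=\Phi$ and the dyadic comparison (\ref{eq:of895ua1l}) that the paper leaves implicit.
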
 

\begin{proof}
The proof  follows from Relation (\ref{eq:saaui8n}) and Lemma \ref{pro:mainplaaqq6}.
\end{proof}

\proof[Proof of Theorem \ref{pro:main 5dmpmw3pl}]
The equivalence $(i) \Leftrightarrow   (ii)$ follows from Proposition \ref{pro:mainplaqaqaq6}. The equivalence between the points $(ii)$ and $(iii)$ follows from the Relation (\ref{eq:claqis}) and
from Lemma \ref{pro:mainplaaqq6}, (for $\Phi_{1}= \Phi_{2}$). For $(i)$ implies $(ii)$, the proof follows from Relation (\ref{eq:soaqn}) and from Theorem \ref{pro:main11pqa6}. It remains for us to prove $(iv)$ implies $i)$.
Suppose that $\Phi\not\in \nabla_{2}$. There exist  $\beta \in \{0; 1/3\}$ and  $f\in L^{\Phi}(\mathbb{R })$ such that $\mathcal{M}_{HL}^{\mathcal{D}^{\beta}}(f) \not\in L^{\Phi}(\mathbb{R})$, according to Proposition \ref{pro:mainplaqaqaq6}.  
For $\lambda>0$, there exists a family $\{I_{j}\}_{j\in \mathbb{N}}$ of pairwise disjoint $\beta-$dyadic intervals such that 
$$  \left\{x\in \mathbb{R} : \mathcal{M}_{HL}^{\mathcal{D}^{\beta}}(f)(x)> \lambda \right\} = \bigcup_{j\in \mathbb{N}}I_{j}  
      $$
and 
 $$ \lambda <\frac{1}{|I_{j}|}\int_{I_{j}}|f(y)|dy \leq 2 \lambda,  ~~ \forall~j\in \mathbb{N}.      $$
 For  $j\in \mathbb{N}$,  consider the interval $\widetilde{I}_{j}$ defined by
 $$ \widetilde{I}_{j}=I_{j} \cup I_{j}',     $$
where $I_{j}'$ is the interval such that $I_{j} \cap I_{j}'=\emptyset$ and $|I_{j}|=|I_{j}'|$.
For $x\in I_{j}'$, we have 
 $$ \frac{1}{2\pi}\lambda < \frac{1}{2\pi}\frac{1}{|I_{j}|}\int_{I_{j}}|f(t)|dt 
 = \frac{1}{\pi}\frac{1}{|\widetilde{I}_{j}|}\int_{\mathbb{R}}|f(t)\chi_{I_{j}}(t)|dt 
 \leq   \sup_{\varepsilon >0}\left|\frac{1}{\pi}\int_{|x-t|>\varepsilon}\frac{|f(t)\chi_{I_{j}}(t)|}{x-t}dt \right| 
 .
       $$
 We deduce that  
  $$ I_{j}' \subset \left\{x\in \mathbb{R} : \mathcal{\widetilde{H}}\left( |f\chi_{I_{j}}|\right)(x)> \frac{\lambda}{2\pi} \right\}.       $$
Since the inequality (\ref{eq:cotjplaqlis}) is satisfied, 
we have  
 \begin{align*}
 \int_{\mathbb{R}}\Phi\left( \mathcal{M}_{HL}^{\mathcal{D}^{\beta}}(f)(x)\right)dx 
 &= 	\int_{0}^{\infty}\Phi^{\prime}(\lambda)\left|\left\{ x\in \mathbb{R} :   \mathcal{M}_{HL}^{\mathcal{D}^{\beta}}f(x)  > \lambda  \right\}\right|d\lambda \\
 &= 	\int_{0}^{\infty}\Phi^{\prime}(\lambda)\left|\bigcup_{j\in \mathbb{N}}I_{j}\right|d\lambda 
 = \int_{0}^{\infty}\Phi^{\prime}(\lambda)\sum_{j\in \mathbb{N}}\left|I_{j}\right|d\lambda 
 = 
 \sum_{j\in \mathbb{N}}\int_{0}^{\infty}\Phi^{\prime}
 (\lambda)\left|I_{j}'\right|d\lambda\\
 &\lesssim \sum_{j\in \mathbb{N}}\int_{0}^{\infty}\Phi^{\prime}
 (\lambda)\left|\left\{x\in \mathbb{R} : \mathcal{\widetilde{H}}\left( f\chi_{I_{j}}\right)(x)> \frac{\lambda}{2\pi} \right\}\right|d\lambda\\
 &\lesssim \sum_{j\in \mathbb{N}}\int_{\mathbb{R}}\Phi\left( \mathcal{\widetilde{H}}\left( |f\chi_{I_{j}}|\right)(x)\right)dx 
 \lesssim \sum_{j\in \mathbb{N}}\int_{\mathbb{R}}\Phi\left(|f(x)\chi_{I_{j}}(x)|\right)dx\\
 &= \sum_{j\in \mathbb{N}}\int_{I_{j}}\Phi\left(|f(x)|\right)dx= \int_{\bigcup_{j\in \mathbb{N}}I_{j}}\Phi\left(|f(x)|\right)dx < \infty.
  \end{align*}
Which is absurd because  $\mathcal{M}_{HL}^{\mathcal{D}^{\beta}}(f) \not\in L^{\Phi}(\mathbb{R})$. Therefore, $\Phi \in \nabla_{2}$.
\epf

\proof[Proof of Lemma \ref{pro:mainplaaqq6}]
The proof of the equivalence between points $(ii)$ and $(iii)$ follows from the Relation (\ref{eq:claqis}) and Lemma \ref{pro:main 3aqqq2}. For $(i)$ implies $(ii)$ uses the same arguments as the proof of the \cite[Proposition 3.6]{djesehb}. It remains for us to prove the converse.
Suppose $(i)$ is not true.
There exists $(t_{k})_{k \geq 1}$, a sequence of positive real numbers such that
$$  \int_{0}^{2^{k}t_{k}}\frac{\Phi_{2}(s)}{s^{2}}ds\geq \dfrac{2^{k}\Phi_{1}(2^{k}t_{k})}{t_{k}},~~\forall~ k \geq 1.        $$
For $k \geq 1$, consider the function  $f_{k}$ defined by 
$$  f_{k}(x):=2^{k}t_{k}\chi_{I_{k}}(x), ~~\forall~x \in \mathbb{R},   $$
where $\hspace*{0.15cm}  I_{k}:=\left\{x\in \mathbb{R} : a_{k}\leq x<a_{k+1} \right\} \hspace*{0.15cm}\text{with}\hspace*{0.15cm} a_{k}=\sum_{j=0}^{k-1}  \frac{1}{2^{j}\Phi_{1}(2^{j}t_{j})}.$
Since inequality (\ref{eq:foaaqimal}) is satisfied, we have
\begin{align*}
\int_{\mathbb{R}}\Phi_{2}\left( \mathcal{M}_{HL}(12f_{k})(x)\right)dx 
&=\int_{0}^{\infty}\Phi_{2}^{\prime}(\lambda)\left|\left\{x\in \mathbb{R} : \mathcal{M}_{HL}\left(f_{k}\right)(x)> \frac{\lambda}{12} \right\}\right|d\lambda\\
&\gtrsim \int_{0}^{\infty}\Phi_{2}^{\prime}(\lambda) \left( \frac{1}{\lambda}\int_{\{y\in \mathbb{R}~ :~ |f_{k}(y)|> \lambda \}}|f_{k}(x)|dx 
 \right)d\lambda\\
&\gtrsim \int_{I_{k}}2^{k}t_{k}\left(\int_{0}^{2^{k}t_{k}} \frac{\Phi_{2}'(\lambda)}{\lambda}d\lambda \right)dx \\
&\gtrsim  2^{k}t_{k}|I_{k}| \times \dfrac{2^{k}\Phi_{1}(2^{k}t_{k})}{t_{k}}.
\end{align*}
Consider the function  $f$ defined by
$$  f(x)= \sum_{k=1}^{\infty}12 f_{k}(x),~~\forall~ x \in \cup_{k\geq 1}I_{k} \hspace*{0.5cm}\textrm{and} \hspace*{0.5cm}  f(x)=0, ~~\forall~ x \in \mathbb{R} \backslash \cup_{k\geq 1}I_{k}.      $$
Since the  $ \{I_{k}\}_{k} $ are pairwise disjoint and   
$$|I_{k}|= a_{k+1}-a_{k}=\dfrac{1}{2^{k}\Phi_{1}(2^{k}t_{k})}, $$
we deduce that    $f\in L^{\Phi_{1}}(\mathbb{R})$. Indeed,
$$  \int_{\mathbb{R}}\Phi_{1}(|f(x)|)dx\lesssim \sum_{k=1}^{\infty}\int_{I_{k}}\Phi_{1}(2^{k}t_{k})\chi_{I_{k}}(x)dx=\sum_{k=1}^{\infty}\Phi_{1}(2^{k}t_{k})|I_{k}|
= \sum_{k= 1}^{\infty} \dfrac{1}{2^{k}}< \infty.       $$
It follows that
$$ \int_{\mathbb{R}}\Phi_{2}\left( \mathcal{M}_{HL}(f)(x)\right)dx \gtrsim \int_{\mathbb{R}}\Phi_{2}\left( \mathcal{M}_{HL}(12f_{k})(x)\right)dx 
 \gtrsim 2^{k}, ~~\forall~ k \geq 1.
      $$
As $2^{k}$ tends to infinity when $(k\longrightarrow \infty)$, we deduce that  $ \mathcal{M}_{HL}(f) \not\in  L^{\Phi_{2}}(\mathbb{R})$.
\epf


\section{Proofs of main results.}

\subsection{Proof of Theorems \ref{pro:main11a6} and \ref{pro:main11aaq6}.}

\proof[Proof of Theorem \ref{pro:main11a6}.]
The implication $(i) \Rightarrow (ii)$ follows from Lemma \ref{pro:main 3aqqq2}. Let's prove $(i)$ implies $(ii)$.
Suppose that  $F\in h^{\Phi}(\mathbb{C}_{+})$. Since the map $f\mapsto U_{f}$ is injective on $L^{\Phi}(\mathbb{R})$,  uniqueness is trivial in $(ii)$.
We now prove the existence.\\
For $z_{0}=x_{0}+iy_{0} \in \mathbb{C_{+}}$,  consider the sequence $(y_{n})_{n}$  such that $0<y_{n}<y_{0}$ and
decreasing to $0$, and put for $z \in \mathbb{C}_{+}$
$$  F_{n}(z)= F(z+iy_{n}).         $$
By construction, $F_{n}$ is continuous and bounded on $\overline{\mathbb{C}_{+}}$, and harmonic on $\mathbb{ C}_{+}$, according to Lemma \ref{pro:main6aapaqmq4k}. Moreover, we have
\begin{equation}\label{eq:limisjgsion}
F(z_{0})=\lim_{n\to \infty}F(z_{0}+iy_{n})=\lim_{n\to \infty}\int_{\mathbb{R}}P_{y_{0}}(x_{0}-t)F(t+iy_{n})dt.
 \end{equation}
For $n \in \mathbb{N}$,  consider $T_{n}$ the map defined by
$$ T_{n}(g) = \int_{\mathbb{R}}F(t+iy_{n})g(t)dt, ~\forall~ g \in M^{\Psi}(\mathbb{R}), $$
where  $\Psi$ is the complementary function of $\Phi$.  For $g \in M^{\Psi}(\mathbb{R})$, we have
$$ |T_{n}(g)| \leq \int_{\mathbb{R}}|F(t+iy_{n})g(t)|dt\leq 2\|F(.+y_{n})\|_{L^{\Phi}}^{lux}\|g\|_{L^{\Psi}}^{lux}\leq 2\|F\|_{h^{\Phi}}^{lux}\|g\|_{L^{\Psi}}^{lux},       $$
thanks to H\"older's inequality in Orlicz spaces. It follows that the sequence $\{T_{n} \}_{n}$ is bounded in $\left( M^{\Psi}(\mathbb{R})\right)^{* }$. Since $M^{\Psi}(\mathbb{R})$ is a separable Banach space, there exists $T \in \left( M^{\Psi}(\mathbb{R})\right )^{*}$ and a sub$-$sequence $\{T_{n_{k}}\}_{k}$  of the sequence $\{T_{n}\}_{n}$ such that $\{T_{n_{k}}\}_{k}$ converges to $T$ for the topology *$-$weak.  There is a unique function $f\in L^{\Phi}(\mathbb{R})$ such that
 $$  T(g) =\int_{\mathbb{R}}f(x)g(x)dx,~~ \forall~g \in M^{\Psi}(\mathbb{R}),      $$
 since $\left( M^{\Psi}(\mathbb{R})\right)^{*}=L^{\Phi}(\mathbb{R})$. As  $P_{y_{0}}$  belongs to $  M^{\Psi}(\mathbb{R})$, we deduce that
\begin{equation}\label{eq:lion}
\lim_{k\to \infty}\int_{\mathbb{R}}P_{y_{0}}(x_{0}-t)F(t+iy_{n_{k}})dt=\int_{\mathbb{R}}P_{y_{0}}(x_{0}-t)f(t)dt.
\end{equation}
It follows that 
$$  F(z_{0})= \int_{\mathbb{R}}P_{y_{0}}(x_{0}-t)f(t)dt,     $$
thanks to Relations (\ref{eq:limisjgsion}) and (\ref{eq:lion}). Moreover, 
$$  \|F\|_{h^{\Phi}}^{lux}=\lim_{y \to 0}\|F(.+iy)\|_{L^{\Phi}}^{lux}=\| f\|_{L^{\Phi}}^{lux},     $$ 
according to  Lemma \ref{pro:main 3aqqq2} and Corollary \ref{pro:main6apaqmq4}.
\epf

\proof[Proof of Theorem \ref{pro:main11aaq6}.]
Let  $F\in h^{\Phi}(\mathbb{C}_{+})$ and   for $\omega \in \mathbb{D}$, put
$$  G(\omega)= F\left(i\frac{1-\omega}{1+\omega} \right).     $$
Since $G\in h^{1}(\mathbb{D})$, there exists a unique finite Borel measure $\nu$ on $\mathbb{T}$ such that for all $0 \leq r<1$
$$  G(re^{iu})=\int_{\mathbb{T}}\frac{1-r^{2}}{1-2r\cos(\theta-u)+r^{2}}d\nu(e^{i\theta}).      $$
For $A \in \mathcal{B}(\mathbb{R})$ and $t\in \mathbb{R}$ put respectively
$$   \lambda(A)= \nu\left(  \varphi^{-1}(A) \right)   $$
and
$$  d\mu(t) =\pi(1+t^{2})d\lambda(t),    $$
where $\mathcal{B}(\mathbb{R})$ is the tribu of borelians on $\mathbb{R}$. 
By construction, $\lambda$ and $\mu$ are measures on $\mathbb{R}$. Moreover,
$$ \frac{1}{\pi}\int_{\mathbb{R}}\frac{d|\mu|(t)}{1+t^{2}}=|\lambda|(\mathbb{R})= |\nu|\left(  \varphi^{-1}(\mathbb{R}) \right) =  |\nu|(\mathbb{T}\backslash\{-1\})  < \infty.         $$ 
For $z=x+iy\in \mathbb{C}_{+}$ and  $\omega=re^{iu} \in \mathbb{D}$ such that $ z=i\frac{1-\omega}{1+\omega}$, we have 
\begin{align*}
F(x+iy)&= G(re^{iu})=\int_{\mathbb{T}}\frac{1-r^{2}}{1-2r\cos(\theta-u)+r^{2}}d\nu(e^{i\theta})\\
&=  \frac{1-r^{2}}{1+2r\cos u+r^{2}}\nu(\{-1\}) + \int_{\mathbb{T}\backslash\{-1\}}\frac{1-r^{2}}{1-2r\cos(\theta-u)+r^{2}}d\nu(e^{i\theta}) \\
&= \nu(\{-1\})y+ \frac{1}{\pi}\int_{\mathbb{R}}\frac{y}{(x-t)^{2}+y^{2}}d\mu(t).
\end{align*}
We deduce that 
\begin{equation}\label{eq:5pmaqm6}
F(x+iy)= \alpha y  + \frac{1}{\pi}\int_{\mathbb{R}}\frac{y}{(x-t)^{2}+y^{2}}d\mu(t), ~~\forall~x+iy \in \mathbb{C}_{+}, 
\end{equation}
where  $\alpha= \nu(\{-1\}) \in  \mathbb{R}$.
Consider the sequence $(y_{n})_{n \in \mathbb{N}}$  of positive real numbers such that   $y_{ n}> 0$ for all $n\in \mathbb{N}$ and $\lim_{n\to \infty}y_{n}=+\infty$.  For $n\in \mathbb{N}$, we have
$$   |\alpha| \leq \frac{|F(iy_{n})|}{y_{n}} + \frac{1}{\pi}\int_{\mathbb{R}}\frac{1}{t^{2}+y_{n}^{2}}d|\mu|(t),     $$
thanks to Relation (\ref{eq:5pmaqm6}).  We deduce that
$$  
|F(iy_{n})| \leq \Phi^{-1}\left(\frac{2}{\pi y_{n}}\right)\|F\|_{h^{\Phi}}^{lux},
       $$
according to Proposition \ref{pro:main6mq4}. It follows that 
\begin{equation}\label{eq:5pmapaqmqm6}
\lim_{n \to \infty}\frac{|F(iy_{n})|}{y_{n}}=0.
\end{equation}
For $t\in \mathbb{R}$, put $$   f_{n}(t)= \frac{1}{t^{2}+y_{n}^{2}}d|\mu|(t).   $$
Since $ \int_{\mathbb{R}}\frac{d|\mu|(t)}{1+t^{2}} < \infty$ and  $$  f_{n}(t)= \frac{1+t^{2}}{t^{2}+y_{n}^{2}}\times\frac{d|\mu|(t)}{1+t^{2}},      $$
we deduce that $ \lim_{n \to \infty}f_{n}(t)=0$,  for almost all $t\in \mathbb{R}$ and 
 $$ |f_{n}(t)| \leq  \frac{d|\mu|(t)}{1+t^{2}},~~\forall~y_{ n} \geq  1,~~   \forall~t\in \mathbb{R}.     $$
It follows that 
\begin{equation}\label{eq:apmqm6}
\lim_{n\to \infty}\int_{\mathbb{R}}|f_{n}(t)|dt =0,
\end{equation}
thanks to dominated convergence theorem. From Relations (\ref{eq:5pmapaqmqm6}) and (\ref{eq:apmqm6}), we have $\alpha=0$. Therefore,
$$   F(x+iy)=  \frac{1}{\pi}\int_{\mathbb{R}}\frac{y}{(x-t)^{2}+y^{2}}d\mu(t), ~~\forall~x+iy \in \mathbb{C}_{+}. 
   $$
The Relation (\ref{eq:deltaqa2}) follows from \cite[Theorem 11.9]{javadmas}.
\epf

\subsection{Proof of Theorems \ref{pro:main11aqa6} and \ref{pro:main11pal6}.}

\begin{lem}\label{pro:mainfaqepq5}
Let $\Phi$ be a convex growth function such that $\Phi(t)>0$ for all $t>0$ and $F$  an analytic function on $\mathbb{C_{+}}$. The following assertions are equivalent.
\begin{itemize}
 \item[(i)] $ F\in H^{\Phi}(\mathbb{C_{+}})$.
 \item[(ii)] There exists a unique  $f \in L^{\Phi}(\mathbb{R})$ such that  $\log|f| \in L^{1}\left(\frac{dt}{1+t^{2}}\right)$ and
 $$  F(x+iy)= U_{f}(x+iy), ~~\forall~x+iy \in \mathbb{C}_{+}.    $$
\end{itemize}
Moreover, 
  $$  \|F\|_{H^{\Phi}}^{lux}=\lim_{y \to 0}\|F(.+iy)\|_{L^{\Phi}}^{lux}=\|f\|_{L^{\Phi}}^{lux}.
     $$
\end{lem}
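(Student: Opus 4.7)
The approach is to handle each implication separately, reducing the hard direction to classical $H^{1}(\mathbb{D})$ theory via the Cayley transform, which has already been set up in Corollary \ref{pro:main5Qpmpm5}.

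The direction (ii) $\Rightarrow$ (i) is essentially immediate: if $F = U_{f}$ with $f \in L^{\Phi}(\mathbb{R})$, then Lemma \ref{pro:main 3aqqq2} gives $U_{f} \in h^{\Phi}(\mathbb{C}_{+})$ with $\|U_{f}\|_{h^{\Phi}}^{lux} = \|f\|_{L^{\Phi}}^{lux}$, and since $F$ is assumed analytic by hypothesis, $F \in H^{\Phi}(\mathbb{C}_{+})$. Uniqueness of $f$ follows from the injectivity of the map $g \mapsto U_{g}$ already noted after Lemma \ref{pro:main 3aqqq2}.

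For the direction (i) $\Rightarrow$ (ii), the plan is as follows. Assume $F \not\equiv 0$ (the case $F\equiv 0$ being trivial). Setting $G(\omega)=F\bigl(i(1-\omega)/(1+\omega)\bigr)$, Corollary \ref{pro:main5Qpmpm5} yields $G \in H^{1}(\mathbb{D})$ with $G \not\equiv 0$. Classical $H^{1}(\mathbb{D})$ theory then provides three pieces of information: (a) the non-tangential boundary values $G^{*}(e^{i\theta})$ exist for almost every $\theta$, (b) $\log|G^{*}| \in L^{1}(\mathbb{T},d\theta)$, and (c) by the F.\ and M.\ Riesz theorem, the representing boundary measure of $G$ is absolutely continuous and $G$ is the Poisson integral of $G^{*}$. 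I would then transport these three statements to the half-plane through the change of variables $t=\tan(\theta/2)$, under which $d\theta = 2dt/(1+t^{2})$ and the disk Poisson kernel becomes the half-plane Poisson kernel $P_{y}$. Defining $f(t) = G^{*}(e^{i\theta(t)})$, this yields: $F(t+iy)\to f(t)$ non-tangentially a.e., $\log|f| \in L^{1}(dt/(1+t^{2}))$, and $F(x+iy)=U_{f}(x+iy)$ on $\mathbb{C}_{+}$.

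To show $f \in L^{\Phi}(\mathbb{R})$, I would apply Fatou's lemma exactly as in the proof of Lemma \ref{pro:main 3aqqq2}: since $\Phi$ is continuous and $F(t+iy)\to f(t)$ a.e., one gets
$$ \int_{\mathbb{R}}\Phi\!\left(\frac{|f(t)|}{\|F\|_{h^{\Phi}}^{lux}}\right)dt \;\leq\; \sup_{y>0}\int_{\mathbb{R}}\Phi\!\left(\frac{|F(t+iy)|}{\|F\|_{h^{\Phi}}^{lux}}\right)dt \;\leq\; 1, $$
so $\|f\|_{L^{\Phi}}^{lux}\leq \|F\|_{h^{\Phi}}^{lux}$. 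Combined with Lemma \ref{pro:main 3aqqq2} applied to $F=U_{f}$, this gives the norm equality $\|F\|_{H^{\Phi}}^{lux}=\|f\|_{L^{\Phi}}^{lux}$, and Corollary \ref{pro:main6apaqmq4} supplies the limit formula.

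The main obstacle is precisely where the analyticity of $F$ is used in an essential way, namely, upgrading the representation of Theorem \ref{pro:main11aaq6} (a harmonic $F \in h^{\Phi}$ is the Poisson integral of a Borel measure $\mu$) to a representation by a density $f \in L^{\Phi}$. The F.\ and M.\ Riesz theorem on $\mathbb{D}$, accessed through Corollary \ref{pro:main5Qpmpm5}, is what rules out any singular component of $\mu$. Beyond this identification, the remaining steps are essentially bookkeeping: verifying that the Cayley change of variables sends the disk Poisson kernel to the half-plane Poisson kernel, and that log-integrability, non-tangential convergence, and the Poisson representation all transfer cleanly.
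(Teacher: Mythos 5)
Your proposal is correct and follows essentially the same route as the paper: both pass to the disk via the Cayley transform, invoke Corollary \ref{pro:main5Qpmpm5} to get $G\in H^{1}(\mathbb{D})$, use the classical $H^{1}(\mathbb{D})$ boundary representation (absolute continuity of the boundary measure plus $\log$-integrability) to obtain an $L^{1}(\mathbb{T})$ density, pull it back to a function $f$ on $\mathbb{R}$ with $F=U_{f}$, and then conclude $f\in L^{\Phi}(\mathbb{R})$ with the norm equalities via the Fatou argument of Lemma \ref{pro:main 3aqqq2} and Corollary \ref{pro:main6apaqmq4}. The only difference is presentational: you inline the Fatou estimate and name the F.\ and M.\ Riesz theorem explicitly, whereas the paper cites these as a single known $H^{1}(\mathbb{D})$ fact and as Lemma \ref{pro:main 3aqqq2}.
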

 
 \begin{proof}
The  implication $(ii)\Rightarrow (i)$ is obvious.
 For the converse, for $\omega \in \mathbb{D}$, put
 $$ G(\omega)= F\left(i\frac{1-\omega}{1+\omega} \right).       $$
Since $G \in H^{1}(\mathbb{D})$,  there exists a unique function 
  $g\in L^{1}(\mathbb{T})$ such that $\log|g| \in L^{1}(\mathbb{T})$ and for all  $0 \leq r<1$,
 $$  G(\omega)= \frac{1}{2\pi}\int_{-\pi}^{\pi}\frac{1-r^{2}}{1-2r\cos(u-\theta)+r^{2}}g(e^{i\theta})d\theta.       $$
For  $z=x+iy\in \mathbb{C}_{+}$ and   $\omega=re^{iu} \in \mathbb{D}$  such that $ z=i\frac{1-\omega}{1+\omega}$, we have 
$$ F(x+iy)=G(re^{iu})= \frac{1}{2\pi}\int_{-\pi}^{\pi}\frac{1-r^{2}}{1-2r\cos(u-\theta)+r^{2}}g(e^{i\theta})d\theta = \frac{1}{\pi}\int_{\mathbb{R}}\frac{y}{(x-t)^{2}+y^{2}}g\circ\varphi^{-1}(t)dt.  $$
We deduce that  
 $$ F(x+iy)=U_{g\circ\varphi^{-1}}(x+iy), ~~\forall~x+iy \in \mathbb{C}_{+}.    $$ 
Since $g$ and $\log|g|$ belong to $L^{1}(\mathbb{T})$, we deduce that $g\circ \varphi^{-1}$ and $\log|g\circ \varphi^{-1}|$ belong to  $L^{1}\left(  \frac{dt}{1+t^{2}} \right)$. It follows that 
$g\circ\varphi^{-1}\in  L^{\Phi}\left(\mathbb{R}\right)$  and $$  \|F\|_{H^{\Phi}}^{lux}=\lim_{y\to 0}\|F(.+iy)\|_{L^{\Phi}}^{lux}=\|g\circ\varphi^{-1}\|_{L^{\Phi}}^{lux},     $$
according to Lemma \ref{pro:main 3aqqq2} and Corollary \ref{pro:main6apaqmq4}.
 \end{proof}

\begin{lem}\label{pro:main5aqq5}
Let $\beta >0$ and  $\Phi \in  \mathscr{U}$. If  $F\in H^{\Phi}(\mathbb{C_{+}})$ then we have
$$  \int_{\mathbb{R}}\frac{F(t+i\beta)}{t-\overline{z}}dt=0, ~~\forall~z \in \mathbb{C}_{+}.    $$
\end{lem}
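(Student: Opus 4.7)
I will prove this by applying Cauchy's theorem on a semicircular contour in $\overline{\mathbb{C}_+}$ to the translate $F_\beta(w):=F(w+i\beta)$. Since $F$ is analytic on $\mathbb{C}_+$, $F_\beta$ extends to an analytic function on the larger half-plane $\{\mathrm{Im}(w)>-\beta\}$, and by Lemma \ref{pro:main6aapaqmq4k} it is continuous and bounded on $\overline{\mathbb{C}_+}$, with $\|F_\beta\|_\infty \leq M := \Phi^{-1}(2/(\pi\beta))\|F\|_{h^\Phi}^{lux}$. For fixed $z\in\mathbb{C}_+$, the pole $w=\overline{z}$ of $w\mapsto F_\beta(w)/(w-\overline{z})$ lies in the lower half-plane, so on the semicircular contour $\Gamma_R:=[-R,R]\cup C_R^+$, where $C_R^+=\{Re^{i\theta}:\theta\in[0,\pi]\}$ is the upper semicircle traversed counterclockwise, Cauchy's theorem gives $\oint_{\Gamma_R}F_\beta(w)/(w-\overline{z})\,dw=0$.

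To pass to the limit $R\to\infty$ in a Lebesgue sense, I first need $\int_\mathbb{R} F_\beta(t)/(t-\overline{z})\,dt$ to converge absolutely. Since $\Phi\in\mathscr{U}$, Section 3 gives $1\leq a_\Phi\leq b_\Phi<\infty$ and $\Phi\in\Delta_2$; moreover property (iii) there says $t\mapsto \Phi(t)/t^{b_\Phi}$ is non-increasing, yielding $\Phi(t)\geq (\Phi(M)/M^{b_\Phi})\,t^{b_\Phi}$ for $0\leq t\leq M$. Applied pointwise to $|F_\beta(t)|\leq M$, together with the fact that $F_\beta|_\mathbb{R}\in L^\Phi(\mathbb{R})$ combined with $\Delta_2$ gives $\int_\mathbb{R}\Phi(|F_\beta(t)|)\,dt<\infty$, this forces $F_\beta|_\mathbb{R}\in L^{b_\Phi}(\mathbb{R})$ with $b_\Phi\geq 1$. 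Since $1/(\,\cdot\,-\overline{z})\in L^{b_\Phi'}(\mathbb{R})$ (with $b_\Phi'\in[1,\infty]$ the Hölder conjugate), Hölder's inequality yields the required absolute convergence, so $\int_{-R}^R\to \int_\mathbb{R}$.

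It remains to show the arc integral vanishes in the limit. Applying Proposition \ref{pro:main6mq4} to $F$ at the point $Re^{i\theta}+i\beta$, whose imaginary part equals $R\sin\theta+\beta$, gives $|F_\beta(Re^{i\theta})|\leq \Phi^{-1}(2/(\pi(R\sin\theta+\beta)))\,\|F\|_{h^\Phi}^{lux}$. Combined with $|Re^{i\theta}-\overline{z}|\geq R/2$ for $R\geq 2|\overline{z}|$, this yields
$$\left|\int_{C_R^+}\frac{F_\beta(w)}{w-\overline{z}}\,dw\right|\leq 2\|F\|_{h^\Phi}^{lux}\int_0^\pi\Phi^{-1}\!\left(\frac{2}{\pi(R\sin\theta+\beta)}\right)d\theta.$$
The integrand is dominated by the constant $\Phi^{-1}(2/(\pi\beta))$ and, for each $\theta\in(0,\pi)$, tends to $\Phi^{-1}(0)=0$ as $R\to\infty$; dominated convergence therefore drives the right-hand side to $0$. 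Letting $R\to\infty$ in $\oint_{\Gamma_R}=0$ then delivers $\int_\mathbb{R}F_\beta(t)/(t-\overline{z})\,dt=0$.

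\textbf{Main obstacle.} The non-routine step is certifying that $\int_\mathbb{R} F_\beta(t)/(t-\overline{z})\,dt$ is genuinely a Lebesgue integral rather than a mere principal value; this is what forces the passage through the embedding $L^\Phi\cap L^\infty\hookrightarrow L^{b_\Phi}$ supplied by the structural property of $\Phi\in\mathscr{U}$. The arc estimate itself is straightforward once one notices that the pointwise bound coming from Proposition \ref{pro:main6mq4} is uniformly controlled by $\Phi^{-1}(2/(\pi\beta))$.
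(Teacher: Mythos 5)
Your proof is correct and follows the same contour-integration skeleton as the paper's: apply Cauchy's theorem to $w\mapsto F(w+i\beta)/(w-\overline{z})$ on a large semicircle in the closed upper half-plane (the paper centres its semicircle at $i\beta$ and works with $F$ directly, which is the same contour after translation), then show that the arc contribution vanishes as $R\to\infty$. The two technical sub-steps are, however, executed differently. For the absolute convergence of $\int_{\mathbb{R}}F(t+i\beta)(t-\overline{z})^{-1}\,dt$, the paper proves that $\varphi_{\overline{z}}:t\mapsto (t-\overline{z})^{-1}$ belongs to the complementary Orlicz space $L^{\Psi}(\mathbb{R})$ (via the converse H\"older inequality and the embedding $L^{\Phi}\subseteq L^{a_\Phi}+L^{b_\Phi}$) and then applies the Orlicz--H\"older inequality; you instead push the bounded boundary function $F(\cdot+i\beta)$ into $L^{b_\Phi}(\mathbb{R})$ using the monotonicity of $t\mapsto\Phi(t)/t^{b_\Phi}$ together with $\Phi\in\Delta_2$, and finish with classical H\"older against $(t-\overline{z})^{-1}\in L^{b_\Phi'}$. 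Both routes are valid; yours avoids the complementary function entirely, while the paper's fact $\varphi_{\overline{z}}\in L^{\Psi}(\mathbb{R})$ is reused later in the proof of Theorem \ref{pro:main11aqa6}, so it is not wasted effort there. For the arc, the paper derives the quantitative rate $\ln\left(\frac{\pi R}{2\beta}\right)R^{-1/q}$ from the lower type $1/q$ of $\Phi^{-1}$, whereas you note that the integrand $\Phi^{-1}\left(\frac{2}{\pi(R\sin\theta+\beta)}\right)$ is dominated by the constant $\Phi^{-1}\left(\frac{2}{\pi\beta}\right)$ and tends pointwise to $0$ off a null set, so dominated convergence suffices. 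This is softer and slightly cleaner: it uses only that $\Phi^{-1}$ is an increasing homeomorphism of $\mathbb{R}_{+}$ fixing $0$, not its lower type.
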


\begin{proof}
Let us prove that the integral 
\begin{equation}\label{eq:5pmapmqm6}
\int_{\mathbb{R}}\frac{F(t+i\beta)}{t-\overline{z}}dt, ~~\forall~z \in \mathbb{C}_{+},
\end{equation}
is well defined.
For $z \in \mathbb{C}_{+}$, consider $\varphi_{\overline{z}}$ the function defined by
 $$   \varphi_{\overline{z}}(t)=\frac{1}{t-\overline{z}}, ~~\forall~t\in \mathbb{R}. 
    $$
Let's show that $\varphi_{\overline{z}} \in L^{\Psi}(\mathbb{R})$, where $\Psi$ is the complementary function of $\Phi$. 
Since $L^{\Phi}(\mathbb{R})$ is contained in $L^{a_\Phi}(\mathbb{R}) + L^{b_\Phi}(\mathbb{R})$, with  $1\leq a_\Phi \leq b_\Phi<\infty$. For $f\in L^{\Phi}(\mathbb{R})$,  there exists $f_{1} \in L^{a_\Phi}(\mathbb{R})$ and $f_{2} \in L^{b_\Phi}(\mathbb{R})$ such that $ f= f_{1}+f_{2}.$ We have
$$  \int_{\mathbb{R}}|\varphi_{\overline{z}}f(t)|dt \leq \int_{\mathbb{R}}|\varphi_{\overline{z}}f_{1}(t)|dt+\int_{\mathbb{R}}|\varphi_{\overline{z}}f_{2}(t)|dt \leq \|\varphi_{\overline{z}}\|_{q_{1}}\|f_{1}\|_{L^{a_\Phi}}+\|\varphi_{\overline{z}}\|_{q_{2}}\|f_{2}\|_{L^{b_\Phi}}
< \infty, 
      $$
where $\frac{1}{a_\Phi}+\frac{1}{q_{1}}=1$ and $\frac{1}{b_\Phi}+\frac{1}{q_{2}}=1$. We deduce that 
$$   \int_{\mathbb{R}}|\varphi_{\overline{z}}f(t)|dt < \infty, ~~\forall~f\in L^{\Phi}(\mathbb{R}).
     $$
It follows that $\varphi_{\overline{z}} \in L^{\Psi}(\mathbb{R})$, according to the inverse H\"older inequality in Orlicz spaces (see \cite[Proposition 1.1]{raoren}). Therefore, the integral(\ref{eq:5pmapmqm6}) is well defined. Indeed,  
$$   \left|\int_{\mathbb{R}}\frac{F(t+i\beta)}{t-\overline{z}}dt \right| \leq  \int_{\mathbb{R}}|F(t+i\beta)\varphi_{\overline{z}}(t)|dt \leq 2 \|\varphi_{\overline{z}}\|_{L^{\Psi}}^{lux}\|F(.+i\beta)\|_{L^{\Phi}}^{lux} \leq 2 \|\varphi_{\overline{z}}\|_{L^{\Psi}}^{lux}\|F\|_{H^{\Phi}}^{lux} < \infty. $$
Let $(O,\overrightarrow{u},\overrightarrow{v})$ be the canonical orthonormal frame of the complex plane. 
For $R > \max\{1, \beta+|z|\}$, consider $\Gamma_{R,\beta}$ the curve defined by 
 $$ \Gamma_{R,\beta}:=\{ \omega \in \mathbb{C}_{+}:\hspace*{0.25cm} |\omega-i\beta| \leq R \hspace*{0.25cm}\text{and} \hspace*{0.25cm} \mathrm{Im}(\omega) \geq \beta       \}.       $$
Since  $i\beta+ \overline{z}$ is not in the interior of  $\Gamma_{R,\beta}$, we deduce that
 $$  \int_{\Gamma_{R,\beta}}\frac{F(\zeta)}{\zeta-i\beta-\overline{z}}d\zeta=0,         $$
 according Cauchy’s integral formula.
Consider the  triangle $AOB$ rectangular at $O$ and contained in the frame  $(O,\overrightarrow{u},\overrightarrow{v})$ such that the distances $AO=\beta$ and $AB=R$, and put $\theta_{R}=mes(\widehat{OBA})$, the measure of the angle $OBA$. We have
$$  \int_{\theta_{R}}^{ \pi-\theta_{R}} \Phi^{-1}\left(\frac{2}{\pi R\sin \theta}\right)d\theta \lesssim   \int_{\theta_{R}}^{\pi/2} \Phi^{-1}\left(\frac{1}{ R\sin \theta}\right)d\theta  \lesssim  \int_{\theta_{R}}^{\pi/2} \Phi^{-1}\left(\frac{1}{ R\theta}\right)d\theta.      $$
Let $q\geq 1$ such that $\Phi \in \mathscr{U}^{q}$.  It follows that 
$$  \int_{\theta_{R}}^{\pi/2} \Phi^{-1}\left(\frac{1}{ R\theta}\right)d\theta  \lesssim  \int_{\theta_{R}}^{\pi/2} \Phi^{-1}\left(\frac{2}{ R\pi}\right)\times \frac{1}{\theta} d\theta \lesssim  \left(\frac{2}{ R \pi}\right)^{1/q}\times\ln\left(  \frac{\pi}{2\theta_{R}} \right),  $$
since    $\Phi^{-1}$ is  of lower-type $1/q$ and the function 
$t\mapsto \frac{\Phi^{-1}(t)}{t}$ is non-increasing on $\mathbb{R}_{+}^{*}$. We deduce that 
$$  \int_{\theta_{R}}^{ \pi-\theta_{R}} \Phi^{-1}\left(\frac{2}{\pi R\sin \theta}\right)d\theta \lesssim  \frac{\ln\left(  \frac{\pi R}{2\beta} \right)}{R^{1/q}},   $$
as $\theta_{R} \geq \sin \theta_{R}=\frac{\beta}{R}$. 
By the Proposition \ref{pro:main6mq4}, we have
\begin{align*}
\left|\int_{-R\cos \theta_{R}}^{R\cos \theta_{R}}\frac{F(t+i\beta)}{t-\overline{z}}dt \right|
&=\left|\int_{\pi-\theta_{R}}^{ \theta_{R}}\frac{F(Re^{i\theta})}{Re^{i\theta}-i\beta-\overline{z}}iRe^{i\theta}d\theta \right| \\
&=\left|-\int_{\theta_{R}}^{ \pi-\theta_{R}}\frac{F(R\cos \theta+iR\sin \theta)}{Re^{i\theta}-i\beta-\overline{z}}iRe^{i\theta}d\theta \right|\\
&\lesssim \int_{\theta_{R}}^{ \pi-\theta_{R}}\left|\frac{F(R\cos \theta+iR\sin \theta)}{Re^{i\theta}-i\beta-\overline{z}}iRe^{i\theta}\right|d\theta \\
&\lesssim \frac{R}{R- |i\beta+\overline{z}|}\|F\|_{H^{\Phi}}^{lux} \int_{\theta_{R}}^{ \pi-\theta_{R}} \Phi^{-1}\left(\frac{2}{\pi R\sin \theta}\right)d\theta. 
\end{align*}
We deduce that 
$$ \left|\int_{-R\cos \theta_{R}}^{R\cos \theta_{R}}\frac{F(t+i\beta)}{t-\overline{z}}dt \right| \lesssim \|F\|_{H^{\Phi}}^{lux} \times\frac{R}{R- |i\beta+\overline{z}|} \times \frac{\ln\left(  \frac{\pi R}{2\beta} \right)}{R^{1/q}} \longrightarrow 0, 
        $$
when $R\longrightarrow \infty$.         
\end{proof}

\proof[Proof of Theorem \ref{pro:main11aqa6}.]
The  implication $(ii)\Rightarrow (i)$ is obvious.
 For the converse, suppose that  $F\in H^{\Phi}(\mathbb{C}_{+})$. There exists a unique function $f\in L^{\Phi}\left(\mathbb{R}\right)$ such that
$$   F(x+iy)= U_{f}(x+iy), ~~\forall~x+iy \in \mathbb{C}_{+}       $$
and 
$$  \|F\|_{H^{\Phi}}^{lux}=\lim_{y \to 0}\|F(.+iy)\|_{L^{\Phi}}^{lux}=\|f\|_{L^{\Phi}}^{lux},
      $$
according to Lemma \ref{pro:mainfaqepq5}. Moreover,  
$$   \lim_{y \to 0}\| F(.+iy)-f\|_{L^{\Phi}}^{lux}=0,
     $$
according to Corollary \ref{pro:main 3v2qqakam}.
For $z \in \mathbb{C}_{+}$, consider $\varphi_{\overline{z}}$ the function defined above by
 $$   \varphi_{\overline{z}}(t)=\frac{1}{t-\overline{z}}, ~~\forall~t\in \mathbb{R}. 
    $$
Since   $\varphi_{\overline{z}} \in L^{\Psi}\left(\mathbb{R}\right)$ (see the proof of the Lemma \ref{pro:main5aqq5}), where $\Psi$ is the complementary function of $\Phi$,  we have for $\beta >0$,
\begin{align*}   
\left|\int_{\mathbb{R}}\frac{f(t)}{t-\overline{z}}dt-\int_{\mathbb{R}}\frac{F(t+i\beta)}{t-\overline{z}}dt \right|
&\leq  \int_{\mathbb{R}}|\varphi_{\overline{z}}(t)||F(t+i\beta)-f(t)|dt \\
&\leq 2\|\varphi_{\overline{z}} \|_{L^{\Psi}}^{lux}\| F(.+i\beta)-f\|_{L^{\Phi}}^{lux},
\end{align*}   
thanks to  H\"older's inequality in Orlicz spaces. We deduce that 
$$ \int_{\mathbb{R}}\frac{f(t)}{t-\overline{z}}dt=\lim_{\beta \to 0}\int_{\mathbb{R}}\frac{F(t+i\beta)}{t-\overline{z}}dt.         $$
It follows that  $f\in  H^{\Phi}\left(\mathbb{R}\right)$ since
$$   \int_{\mathbb{R}}\frac{f(t)}{t-\overline{z}}dt=0, ~~\forall~ z\in \mathbb{C}_{+},      $$
according to  Lemma \ref{pro:main5aqq5}.
For $z=x+iy\in \mathbb{C}_{+}$, we have 
  $$ \frac{y}{(x-t)^{2}+y^{2}} =\frac{i/2}{t-\overline{z}} - \frac{i/2}{t-z},              $$
and
$$ \frac{x-t}{(x-t)^{2}+y^{2}} =\frac{-1/2}{t-\overline{z}} + \frac{-1/2}{t-z}.              $$
We deduce that 
$$ F(z)=\frac{1}{2\pi i}\int_{\mathbb{R}}\frac{f(t)}{t-z}dt 
=\frac{i}{\pi}\int_{\mathbb{R}}\frac{x-t}{(x-t)^{2}+y^{2}}f(t)dt.
     $$
\epf

\proof[Proof of Theorem \ref{pro:main11pal6}.]
For  $z=x+iy \in \mathbb{C}_{+}$, we have   $$  \frac{1}{i\pi}\frac{1}{t-z} = \frac{1}{\pi}\frac{y}{(x-t)^{2}+y^{2}} + \frac{i}{\pi}\frac{x-t}{(x-t)^{2}+y^{2}},~~ \forall~t \in \mathbb{R}.        $$ 
It follows that
$$ S(f)(x+iy)
=U_{f}(x+iy) +iV_{f}(x+iy)= U_{f}(x+iy) +iU_{\mathcal{H}(f)}(x+iy)         $$
and 
$$  
\lim_{y\to 0}S(f)(x+iy)=f(x)+i\mathcal{H}(f)(x),    $$ 
for almost all $x\in \mathbb{R}$, according to Theorem \ref{pro:main11pqa6}. 
Since $U_{f}$ is a a real-valued harmonic function on $\mathbb{C}_{+}$,  $V_{f}$  its harmonic conjugate and   $\mathbb{C}_{+}$  a simply connected open set, we deduce that $S(f)$ is an analytic function on $\mathbb{C}_{+}$.  It follows that $S(f)\in  H^{\Phi}(\mathbb{C}_{+})$ if and only if
  $f+i\mathcal{H}(f) \in  H^{\Phi}(\mathbb{R})$, according to Theorem \ref{pro:main11aqa6}.  Therefore,
$$  \| f\|_{L^{\Phi}}^{lux}=\|U_{f}\|_{h^{\Phi}}^{lux}\leq\|S(f)\|_{H^{\Phi}}^{lux} \leq   \|U_{f}\|_{h^{\Phi}}^{lux}+\|V_{f}\|_{h^{\Phi}}^{lux} \lesssim  \| f\|_{L^{\Phi}}^{lux},   $$
thanks to  Lemma \ref{pro:main 3aqqq2} and  Theorem \ref{pro:main11pqa6}.
\epf

\subsection{Proof of Theorem \ref{pro:main 5dmw3pl}.}

\begin{prop}\label{pro:main 5apqaq8qllm}
Let $\Phi$ be an N$-$function and $f\in L^{\Phi}(\mathbb{R})$. If  $\Phi \in \nabla_{2}$ then  the following assertions are satisfied.
\begin{itemize}
\item[(i)] For almost all $x\in \mathbb{R}$, 
$$ \mathcal{H} \left( \mathcal{H}(f) \right)(x)=-f(x).      $$
\item[(ii)] There exist $c_{\Phi}$ and $C_{\Phi}$ constants depending only on $\Phi$ such that
\begin{equation}\label{eq:phimaaqde}
c_{\Phi}\| f\|_{L^{\Phi}}^{lux} \leq \| \mathcal{H}(f)\|_{L^{\Phi}}^{lux} \leq C_{\Phi}\| f\|_{L^{\Phi}}^{lux}.
\end{equation}
 \end{itemize}
\end{prop}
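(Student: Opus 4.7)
The plan is to prove (i) first by a density argument, and then to derive both inequalities in (ii) from (i) together with the already available boundedness of the Hilbert transform on $L^{\Phi}(\mathbb{R})$.

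The upper bound in (ii), namely $\|\mathcal{H}(f)\|_{L^{\Phi}}^{lux} \leq C_{\Phi}\|f\|_{L^{\Phi}}^{lux}$, is already recorded inside Theorem \ref{pro:main11pqa6} (via \cite[Theorem 3.1.4]{kokokrbec}), so nothing new has to be done for it; this will serve as a black box in what follows. For (i), I would exploit that $\Phi \in \nabla_{2}$ forces $1< a_{\Phi}\leq b_{\Phi}<\infty$, and hence (by the inclusions recorded after Proposition \ref{pro:main706}) that $L^{a_{\Phi}}(\mathbb{R})\cap L^{b_{\Phi}}(\mathbb{R})$ embeds as a dense subspace of $L^{\Phi}(\mathbb{R})$ — a density already invoked inside the proof of Theorem \ref{pro:main11pqa6}. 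For $g \in L^{a_{\Phi}}(\mathbb{R}) \cap L^{b_{\Phi}}(\mathbb{R})$, the classical identity $\mathcal{H}(\mathcal{H}(g))(x)=-g(x)$ holds for almost every $x\in \mathbb{R}$, since $\mathcal{H}^{2}=-I$ on each $L^{p}(\mathbb{R})$ with $1<p<\infty$.

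Given $f\in L^{\Phi}(\mathbb{R})$, I would choose a sequence $\{f_{n}\}\subset L^{a_{\Phi}}(\mathbb{R})\cap L^{b_{\Phi}}(\mathbb{R})$ with $\|f_{n}-f\|_{L^{\Phi}}^{lux}\to 0$. Two applications of the upper bound yield
\[ \|\mathcal{H}(\mathcal{H}(f_{n}))-\mathcal{H}(\mathcal{H}(f))\|_{L^{\Phi}}^{lux} \leq C_{\Phi}^{2}\|f_{n}-f\|_{L^{\Phi}}^{lux} \longrightarrow 0, \]
while $\mathcal{H}(\mathcal{H}(f_{n}))=-f_{n} \to -f$ in $L^{\Phi}(\mathbb{R})$. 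Uniqueness of limits in $L^{\Phi}(\mathbb{R})$ then gives $\mathcal{H}(\mathcal{H}(f))=-f$ as elements of $L^{\Phi}(\mathbb{R})$, which in turn delivers the almost everywhere identity.

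Once (i) is in hand, the lower bound in (ii) is a one-line consequence of the upper bound applied to $\mathcal{H}(f)$, namely
\[ \|f\|_{L^{\Phi}}^{lux} = \|\mathcal{H}(\mathcal{H}(f))\|_{L^{\Phi}}^{lux} \leq C_{\Phi}\|\mathcal{H}(f)\|_{L^{\Phi}}^{lux}, \]
so one may take $c_{\Phi}=1/C_{\Phi}$. The only genuinely delicate point is justifying the density of $L^{a_{\Phi}}(\mathbb{R})\cap L^{b_{\Phi}}(\mathbb{R})$ in $L^{\Phi}(\mathbb{R})$ and invoking the classical Riesz identity $\mathcal{H}^{2}=-I$ on that intersection; everything else is a routine limit passage using the continuity of $\mathcal{H}$ on $L^{\Phi}(\mathbb{R})$.
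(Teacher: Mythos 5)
Your argument is correct, and for part (i) it takes a genuinely different route from the paper. The paper proves $\mathcal{H}(\mathcal{H}(f))=-f$ by complex-analytic means: it assumes $f$ real-valued, notes via Theorem \ref{pro:main11pal6} that $S(f)\in H^{\Phi}(\mathbb{C}_{+})$ with boundary values $f+i\mathcal{H}(f)$, feeds this back into the Cauchy representation of Theorem \ref{pro:main11aqa6} to get $2S(f)=S(f)+\frac{1}{\pi}\int_{\mathbb{R}}\frac{\mathcal{H}(f)(t)}{t-z}\,dt$, hence $S(f)=-V_{\mathcal{H}(f)}+iU_{\mathcal{H}(f)}$, and identifies real and imaginary parts of the boundary values. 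Your proof instead reduces to the classical Riesz identity $\mathcal{H}^{2}=-I$ on $L^{p}$, $1<p<\infty$, transported to $L^{\Phi}$ by density of $L^{a_{\Phi}}\cap L^{b_{\Phi}}$ (equivalently of $C_{c}(\mathbb{R})$, via Proposition \ref{pro:main706} and the inclusions after it) together with the continuity of $\mathcal{H}$ on $L^{\Phi}$ from \cite[Theorem 3.1.4]{kokokrbec} — the same black box the paper uses. Your route is more elementary (no representation theorems needed, no reduction to real-valued $f$) and isolates exactly what is used: linearity and boundedness of the principal-value operator plus the identity on a dense class. The paper's route has the merit of staying internal to the machinery it has just built and of exhibiting the analytic completion $S(f)$ explicitly, which it reuses in Lemma \ref{pro:main 53ppaql}. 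For part (ii) the two arguments coincide: the upper bound is the quoted boundedness of $\mathcal{H}$, and the lower bound is the one-line consequence $\|f\|_{L^{\Phi}}^{lux}=\|\mathcal{H}(\mathcal{H}(f))\|_{L^{\Phi}}^{lux}\leq C_{\Phi}\|\mathcal{H}(f)\|_{L^{\Phi}}^{lux}$. The only point you should make explicit is that the operator furnished by \cite[Theorem 3.1.4]{kokokrbec} on all of $L^{\Phi}(\mathbb{R})$ is the principal-value Hilbert transform itself (not merely a bounded extension from a dense subspace), so that $\mathcal{H}(f_{n}-f)=\mathcal{H}(f_{n})-\mathcal{H}(f)$ a.e.; this is how the paper also reads that reference, so it is consistent with the surrounding text.
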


\begin{proof}
Without loss of generality, we assume that $f$ is real-valued. Since  $S(f)\in H^ {\Phi}(\mathbb{C}_{+})$ and 
\begin{equation}\label{eq:maaqqde}
\lim_{y\to 0}S(f)(x+iy)=f(x)+i\mathcal{H}(f)(x),
\end{equation}
for almost all $x\in \mathbb{R}$, according to Theorem \ref{pro:main11pal6}. It follows that
$$ S(f)(z)=\frac{1}{2\pi i}\int_{\mathbb{R}}\frac{f(t)+ i\mathcal{H}(f)(t)}{t-z}dt, ~~\forall~z\in  \mathbb{C_{+}},   $$
according to Theorem \ref{pro:main11aqa6}.
For $z=x+iy\in  \mathbb{C_{+}}$, we have 
$$  2S(f)(z)= \frac{1}{\pi i}\int_{\mathbb{R}}\frac{f(t)+ i\mathcal{H}(f)(t)}{t-z}dt =   S(f)(z)+\frac{1}{\pi }\int_{\mathbb{R}}\frac{\mathcal{H}(f)(t)}{t-z}dt.  $$
We deduce that 
$$ S(f)(z)=\frac{1}{\pi }\int_{\mathbb{R}}\frac{\mathcal{H}(f)(t)}{t-z}dt= -V_{\mathcal{H}(f)}(x+iy)+iU_{\mathcal{H}(f)}(x+iy)      $$
and 
\begin{equation}\label{eq:maqde}
\lim_{y\to 0}S(f)(x+iy)=-\mathcal{H}\left(\mathcal{H}(f)\right)(x)+i \mathcal{H}(f)(x),
\end{equation}
for almost all $x\in \mathbb{R}$. It follows that
 $$  \lim_{y\to 0}S(f)(x+iy)=f(x)+i\mathcal{H}(f)(x)=  -\mathcal{H}\left(\mathcal{H}(f)\right)(x)+i \mathcal{H}(f)(x),       $$
for almost all $x\in \mathbb{R}$, thanks to Relations (\ref{eq:maaqqde}) and (\ref{eq:maqde}). By identification, we get
 $$  \mathcal{H}\left(\mathcal{H}(f)\right)(x)=-f(x),     $$
for almost all $x\in \mathbb{R}$.  Since  $\mathcal{H}$ is bounded on $L^{\Phi}(\mathbb{R})$, we have 
$$  \|f\|_{L^{\Phi}}^{lux}=  \|\mathcal{H} \left( \mathcal{H}(f) \right)\|_{L^{\Phi}}^{lux} \lesssim \|\mathcal{H}(f)\|_{L^{\Phi}}^{lux}\lesssim \|f\|_{L^{\Phi}}^{lux}.
  $$
\end{proof}

The proof of Theorem \ref{pro:main 5dmw3pl} follows from Theorems \ref{pro:main11a6} and \ref{pro:main 5dmpmw3pl}, and  Proposition \ref{pro:main 5apqaq8qllm}. Therefore, it will not be written.

\subsection{Proof of Theorem \ref{pro:main 5aqk3pl}.}

\begin{prop}\label{pro:main90aqaqmq2}
Let $(\Phi,\Psi)$ be a complementary pair of $N-$functions. If both $\Phi$ and $\Psi$  satisfy to $\Delta_2-$condition then for   $(f,g) \in L^{\Phi}(\mathbb{R}) \times L^{\Psi}(\mathbb{R})$, we have
$$  \lim_{y\to 0}\int_{\mathbb{R}}U_{f}(x+iy)U_{g}(x+iy)dx =\int_{\mathbb{R}}f(x)g(x)dx, 
    $$
and 
$$  \lim_{y\to 0}\int_{\mathbb{R}}\left|U_{f}(x+iy)U_{g}(x+iy)\right|dx =\int_{\mathbb{R}}|f(x)g(x)|dx.
    $$
\end{prop}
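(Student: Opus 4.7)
The plan is to reduce both limits to the convergence statement already proved in Corollary \ref{pro:main 3v2qqakam}, by writing the difference $U_{f}U_{g}-fg$ as a sum of two terms, each controlled by H\"older's inequality in Orlicz spaces.

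First I would use the decomposition
$$ U_{f}(x+iy)U_{g}(x+iy)-f(x)g(x)=[U_{f}(x+iy)-f(x)]U_{g}(x+iy)+f(x)[U_{g}(x+iy)-g(x)]. $$
Applying H\"older's inequality in Orlicz spaces (valid since $(\Phi,\Psi)$ is a complementary pair of $N$-functions) to each term yields
$$ \int_{\mathbb{R}}|U_{f}(x+iy)U_{g}(x+iy)-f(x)g(x)|dx \leq 2\|U_{f}(.+iy)-f\|_{L^{\Phi}}^{lux}\|U_{g}(.+iy)\|_{L^{\Psi}}^{lux}+2\|f\|_{L^{\Phi}}^{lux}\|U_{g}(.+iy)-g\|_{L^{\Psi}}^{lux}. $$

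Next I would bound the factor $\|U_{g}(.+iy)\|_{L^{\Psi}}^{lux}$ uniformly in $y$: since $\Psi$ is a convex growth function with $\Psi(t)>0$ for $t>0$, Lemma \ref{pro:main 3aqqq2} gives $\|U_{g}(.+iy)\|_{L^{\Psi}}^{lux}\leq\|U_{g}\|_{h^{\Psi}}^{lux}=\|g\|_{L^{\Psi}}^{lux}$. Because both $\Phi$ and $\Psi$ belong to $\Delta_{2}$, Corollary \ref{pro:main 3v2qqakam} applies separately to $f\in L^{\Phi}(\mathbb{R})$ and $g\in L^{\Psi}(\mathbb{R})$, yielding
$$ \lim_{y\to 0}\|U_{f}(.+iy)-f\|_{L^{\Phi}}^{lux}=0 \hspace*{0.4cm}\text{and}\hspace*{0.4cm} \lim_{y\to 0}\|U_{g}(.+iy)-g\|_{L^{\Psi}}^{lux}=0. $$
Combining these with the estimate above, the right-hand side tends to $0$ as $y\to 0$. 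Since $fg\in L^{1}(\mathbb{R})$ by H\"older, this proves the first assertion.

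Finally, for the second limit I would invoke the reverse triangle inequality $\bigl||U_{f}(x+iy)U_{g}(x+iy)|-|f(x)g(x)|\bigr|\leq|U_{f}(x+iy)U_{g}(x+iy)-f(x)g(x)|$, so that
$$ \left|\int_{\mathbb{R}}|U_{f}(x+iy)U_{g}(x+iy)|dx-\int_{\mathbb{R}}|f(x)g(x)|dx\right|\leq\int_{\mathbb{R}}|U_{f}(x+iy)U_{g}(x+iy)-f(x)g(x)|dx, $$
and the right-hand side already goes to $0$ by the previous step. There is no real obstacle here: everything reduces to the two approximation facts and the uniform bound on $\|U_{g}(.+iy)\|_{L^{\Psi}}^{lux}$. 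The only point that requires attention is that the $\Delta_{2}$ assumption on \emph{both} $\Phi$ and $\Psi$ is essential, since the argument uses $L^{\Phi}$-convergence of $U_{f}(.+iy)$ \emph{and} $L^{\Psi}$-convergence of $U_{g}(.+iy)$.
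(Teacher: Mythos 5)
Your proof is correct and follows exactly the route the paper intends: the paper's own proof is the one-line remark that the result "follows from Corollary \ref{pro:main 3v2qqakam} and H\"older's inequality in Orlicz spaces," and your bilinear decomposition together with the uniform bound $\|U_{g}(.+iy)\|_{L^{\Psi}}^{lux}\leq\|g\|_{L^{\Psi}}^{lux}$ is precisely the standard way to carry that out. Nothing further is needed.
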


\begin{proof}
The proof  follows from  Corollary \ref{pro:main 3v2qqakam} and H\"older's inequality in Orlicz spaces. 
\end{proof}

Let $(\Phi,\Psi)$ be a complementary pair of $N-$functions. The function
 $$  f\mapsto \|f\|_{L^{\Phi}}^{0}:= \sup\left\{  \int_{\mathbb{R}}|f(x)g(x)|dx    :  g\in L^{\Psi}(\mathbb{R})\hspace*{0.25cm} \text{with}\hspace*{0.25cm} \|g\|_{L^{\Psi}}^{lux} \leq 1 \right\},    $$
is a   norm on $L^{\Phi}(\mathbb{R})$. Moreover, 
 \begin{equation}\label{eq:noyaqaoson}
   \|f\|_{L^{\Phi}}^{lux} \leq \|f\|_{L^{\Phi}}^{0} \leq 2\|f\|_{L^{\Phi}}^{lux}, ~~ \forall~f \in L^{\Phi}(\mathbb{R}).
 \end{equation}

\begin{prop}\label{pro:mainaqqaqq2}
Let  $\Phi$ be an N$-$function and $f\in L^{\Phi}(\mathbb{R})$. If   $\Phi \in  \nabla_{2}$ then we have,
$$  \lim_{y\to 0}\|U_{f}(.+iy)-f\|_{L^{\Phi}}^{0}=0     $$
and 
$$ \lim_{y\to 0}\|U_{f}(.+iy)\|_{L^{\Phi}}^{0}= \|f\|_{L^{\Phi}}^{0}.   $$
\end{prop}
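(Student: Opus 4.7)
The plan is to reduce the convergence statements in the Orlicz norm $\|\cdot\|_{L^{\Phi}}^{0}$ to the corresponding convergence in the Luxemburg norm $\|\cdot\|_{L^{\Phi}}^{lux}$, using the comparison (\ref{eq:noyaqaoson}) and Corollary \ref{pro:main 3v2qqakam}. Note first that since $\Phi\in\nabla_{2}$, both $\Phi$ and its complementary $\Psi$ satisfy $\Delta_{2}$, so the hypotheses of all earlier results on approximate identities apply to each of these Orlicz spaces.

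For the first assertion, I would start by observing that $U_{f}(\cdot+iy)\in L^{\Phi}(\mathbb{R})$ for every $y>0$ by Lemma \ref{pro:main 3aqqq2}, and hence $U_{f}(\cdot+iy)-f\in L^{\Phi}(\mathbb{R})$. The key step is the inequality
\[
\|U_{f}(\cdot+iy)-f\|_{L^{\Phi}}^{0} \;\leq\; 2\,\|U_{f}(\cdot+iy)-f\|_{L^{\Phi}}^{lux},
\]
which is exactly the right-hand half of (\ref{eq:noyaqaoson}) (it can also be derived directly from H\"older's inequality in Orlicz spaces: for any $g\in L^{\Psi}(\mathbb{R})$ with $\|g\|_{L^{\Psi}}^{lux}\leq 1$, one has $\int_{\mathbb{R}}|(U_{f}(x+iy)-f(x))g(x)|dx\leq 2\|U_{f}(\cdot+iy)-f\|_{L^{\Phi}}^{lux}$, and the supremum over such $g$ gives the bound). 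Since $\Phi\in\nabla_{2}\subset\Delta_{2}$, Corollary \ref{pro:main 3v2qqakam} yields $\|U_{f}(\cdot+iy)-f\|_{L^{\Phi}}^{lux}\to 0$ as $y\to 0$, and the first limit follows.

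For the second assertion, I would use that $\|\cdot\|_{L^{\Phi}}^{0}$ is a norm on $L^{\Phi}(\mathbb{R})$, which gives the reverse triangle inequality
\[
\bigl|\|U_{f}(\cdot+iy)\|_{L^{\Phi}}^{0}-\|f\|_{L^{\Phi}}^{0}\bigr|\;\leq\;\|U_{f}(\cdot+iy)-f\|_{L^{\Phi}}^{0},
\]
and letting $y\to 0$ the right-hand side tends to $0$ by the first part. This yields the second limit. There is no serious obstacle here: the whole argument is a two-line reduction, and the only point to be careful about is to justify that the comparison (\ref{eq:noyaqaoson}) (originally stated for elements of $L^{\Phi}$) applies to $U_{f}(\cdot+iy)-f$, which is immediate once we note $U_{f}(\cdot+iy),f\in L^{\Phi}(\mathbb{R})$.
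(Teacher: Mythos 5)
Your proposal is correct and is essentially identical to the paper's own (very brief) proof, which likewise derives both limits from Corollary \ref{pro:main 3v2qqakam} combined with the norm comparison (\ref{eq:noyaqaoson}). The only detail you add — the reverse triangle inequality for the second limit and the remark that $U_{f}(\cdot+iy)-f\in L^{\Phi}(\mathbb{R})$ — is exactly what the paper leaves implicit.
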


\begin{proof}
The proof  follows from  Corollary \ref{pro:main 3v2qqakam} and Relation (\ref{eq:noyaqaoson}).
\end{proof}

\begin{prop}\label{pro:mainqaqaqq2}
Let  $\Phi$ be an N$-$function such that  $\Phi \in  \nabla_{2}$. The map $F\mapsto \|F\|_{h^{\Phi}}^{0}$ is a norm on $h^{\Phi}(\mathbb{C}_{+})$. Moreover, 
\begin{equation}\label{eq:noyaonqaoson}
   \|F\|_{h^{\Phi}}^{lux} \leq \|F\|_{h^{\Phi}}^{0} \leq 2\|F\|_{h^{\Phi}}^{lux}, ~~ \forall~F\in  h^{\Phi}(\mathbb{C}_{+}).
 \end{equation}
\end{prop}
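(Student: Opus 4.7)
The plan is to reduce the statement to the already-established norm equivalence on $L^{\Phi}(\mathbb{R})$ given by (\ref{eq:noyaqaoson}), by transferring everything from $h^{\Phi}(\mathbb{C}_+)$ to $L^{\Phi}(\mathbb{R})$ via the Poisson representation theorem. Since $\Phi \in \nabla_2$ is an N-function, its complementary function $\Psi$ is also an N-function, and both $\Phi$ and $\Psi$ satisfy the $\Delta_2$-condition. Hence Theorem \ref{pro:main11a6} applies to both $\Phi$ and $\Psi$, producing isometric bijections $F \leftrightarrow f$ between $h^{\Phi}(\mathbb{C}_+)$ and $L^{\Phi}(\mathbb{R})$, and $G \leftrightarrow g$ between $h^{\Psi}(\mathbb{C}_+)$ and $L^{\Psi}(\mathbb{R})$, via $F = U_f$, $G = U_g$, with $\|F\|_{h^\Phi}^{lux} = \|f\|_{L^\Phi}^{lux}$ and $\|G\|_{h^\Psi}^{lux} = \|g\|_{L^\Psi}^{lux}$.

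The key step is to identify the two dual-type norms. Fix $F = U_f \in h^{\Phi}(\mathbb{C}_+)$. For every $G = U_g$ with $\|G\|_{h^\Psi}^{lux} \leq 1$, equivalently for every $g \in L^{\Psi}(\mathbb{R})$ with $\|g\|_{L^\Psi}^{lux} \leq 1$, Proposition \ref{pro:main90aqaqmq2} (whose hypotheses are met since both $\Phi$ and $\Psi$ are in $\Delta_2$) gives
$$  \lim_{y\to 0}\int_{\mathbb{R}} |F(x+iy) G(x+iy)|\, dx = \lim_{y\to 0}\int_{\mathbb{R}} |U_f(x+iy) U_g(x+iy)|\, dx = \int_{\mathbb{R}}|f(x) g(x)|\, dx.     $$
Taking the supremum over all admissible $G$, and noting that the correspondence $g \leftrightarrow G$ is a bijection preserving the unit ball, yields the identification
$$   \|F\|_{h^\Phi}^0 = \|f\|_{L^\Phi}^0.       $$

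Combining this identity with Relation (\ref{eq:noyaqaoson}) and the isometry $\|F\|_{h^\Phi}^{lux} = \|f\|_{L^\Phi}^{lux}$ immediately gives
$$   \|F\|_{h^\Phi}^{lux} = \|f\|_{L^\Phi}^{lux} \leq \|f\|_{L^\Phi}^0 = \|F\|_{h^\Phi}^0 \leq 2\|f\|_{L^\Phi}^{lux} = 2\|F\|_{h^\Phi}^{lux},       $$
which is exactly (\ref{eq:noyaonqaoson}). For the norm properties of $F \mapsto \|F\|_{h^\Phi}^0$, homogeneity and the triangle inequality follow directly from the definition as a supremum of sublinear quantities; positivity follows from the lower bound in (\ref{eq:noyaonqaoson}) together with the fact that $\|\cdot\|_{h^\Phi}^{lux}$ is already a norm.

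This argument is mostly a bookkeeping exercise once the Poisson representation is in place, so I do not expect a serious obstacle. The only point requiring mild care is ensuring that Proposition \ref{pro:main90aqaqmq2} is invoked under its correct hypotheses (both $\Phi$ and $\Psi$ in $\Delta_2$), which is automatic from $\Phi \in \nabla_2$, and verifying that passing to the supremum commutes with the limit in $y$—but since the Proposition gives equality with a $y$-independent quantity, this is trivial.
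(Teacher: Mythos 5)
Your proof is correct and follows essentially the same route as the paper: both rest on the identity $\|F\|_{h^{\Phi}}^{0}=\|f\|_{L^{\Phi}}^{0}$ obtained from the Poisson representation (Theorem \ref{pro:main11a6} applied to $\Phi$ and to $\Psi$, which preserves the Luxemburg unit balls) together with Proposition \ref{pro:main90aqaqmq2}, and then invoke (\ref{eq:noyaqaoson}). The only, harmless, difference is that you deduce definiteness of $\|\cdot\|_{h^{\Phi}}^{0}$ from the lower bound in (\ref{eq:noyaonqaoson}), whereas the paper first establishes it separately by testing against an explicit $g=\Psi^{-1}\left(1/|K|\right)\chi_{K}$; your ordering renders that construction unnecessary.
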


\begin{proof}
It follows from the H\"older inequality in Orlicz spaces that the map $F\mapsto \|F\|_{h^{\Phi}}^{0}$ is well defined on $h^{\Phi}(\mathbb{C}_{+})$. The homogeneity and the triangle inequality are obvious. It remains to prove that for $F\in h^{\Phi}(\mathbb{C}_{+})$, if $\|F\|_{h^{\Phi}}^{0}=0$ then $F\equiv 0$.
Let  $F\in h^{\Phi}(\mathbb{C}_{+})$. 
Suppose that $F\not\equiv 0$. There exists a unique function
 $f \in L^{\Phi}(\mathbb{R})$ such that  
$$  F(x+iy)= U_{f}(x+iy), ~~ \forall~ x+iy \in \mathbb{C_{+}}    $$
and $ \|F\|_{h^{\Phi}}^{lux}= \| f\|_{L^{\Phi}}^{lux}$,  according to Theorem \ref{pro:main11a6}.   Put   
  $$ A:= \{x\in \mathbb{R} : | f(x)|>0 \}.     $$
Since $|A| \not=0$,  we can find a measurable subset $K$ of $\mathbb{R}$ such that $K \subset A$ and $0< | K| < \infty$. Consider the function $g$ defined by
$$  g(x)=\Psi^{-1}\left(\frac{1}{|K|} \right)\chi_{K}(x), ~~ \forall~ x\in \mathbb{R},        $$
where  $\Psi$ is the complementary function of $\Phi$. By construction,  $g \in L^{\Psi}(\mathbb{R})$ and $\| g\|_{L^{\Psi}}^{lux} \leq 1$. Indeed,
$$ \int_{\mathbb{R}} \Psi\left(| g(x)| \right)dx = \int_{\mathbb{R}} \Psi\left(\Psi^{-1}\left(\frac{1}{|K|} \right)\chi_{K}(x) \right)dx =\frac{1}{|K|}\int_{K} \Psi\left(k \right)dx =1.  $$
 It follows that,  $U_{g}\in h^{\Psi}(\mathbb{C}_{+})$ and  $\|U_{g}\|_{h^{\Psi}}^{lux} = \|g\|_{L^{\Psi}}^{lux}$.  We deduce that  
$$ \|F\|_{h^{\Phi}}^{0}\geq    \int_{\mathbb{R}}\left|f(x)\Psi^{-1}\left(\frac{1}{|K|} \right)\chi_{K}(x)\right|dx=\Psi^{-1}\left(\frac{1}{|K|} \right) \int_{K}|f(x)|dx >0,
     $$
thanks to Proposition \ref{pro:main90aqaqmq2}. Therefore, $\|F\|_{h^{\Phi}}^{0}\not=0$.\\
Let us now prove Relation (\ref{eq:noyaonqaoson}).
For $F \in h^{\Phi}(\mathbb{C}_{+})$,  there exists a unique function
 $f \in L^{\Phi}(\mathbb{R})$ such that 
$$  F(x+iy)= U_{f}(x+iy), ~~ \forall~ x+iy \in \mathbb{C_{+}}    $$
and  $ \|F\|_{h^{\Phi}}^{lux}= \| f\|_{L^{\Phi}}^{lux}$.   It will suffice to show that
\begin{equation}\label{eq:noyaonqaaqmmaoson}
\| F\|_{h^{\Phi}}^{0}=  \| f\|_{L^{\Phi}}^{0},
 \end{equation}
in view of Relation (\ref{eq:noyaqaoson}).
Let $G \in h^{\Psi}(\mathbb{C}_{+})$ such that $\|G\|_{h^{\Psi}}^{lux} \leq 1$. There exists a unique function
 $g \in L^{\Psi}(\mathbb{R})$ such that
$$  G(x+iy)= U_{g}(x+iy), ~~ \forall~ x+iy \in \mathbb{C_{+}}     $$
and  $ \|G\|_{h^{\Psi}}^{lux}= \| g\|_{L^{\Psi}}^{lux}$. We deduce that
$$  \lim_{y\to 0}\int_{\mathbb{R}}\left|F(x+iy)G(x+iy)\right|dx =\int_{\mathbb{R}}|f(x)g(x)|dx \leq  \| f\|_{L^{\Phi}}^{0},    $$
according to the Proposition \ref{pro:main90aqaqmq2}. It follows that
\begin{equation}\label{eq:noyaonqamaosaqppon}
    \|F\|_{h^{\Phi}}^{0} \leq \| f\|_{L^{\Phi}}^{0}.
 \end{equation}
Let  $g \in L^{\Psi}(\mathbb{R})$ such that $\| g\|_{L^{\Psi}}^{lux}\leq 1$. Since  $U_{g}\in h^{\Psi}(\mathbb{C}_{+})$ and  $\|U_{g}\|_{h^{\Psi}}^{lux} = \|g\|_{L^{\Psi}}^{lux}$, and  as 
$$   \int_{\mathbb{R}}|f(x)g(x)|dx=  \lim_{y\to 0}\int_{\mathbb{R}}\left|U_{f}(x+iy)U_{g}(x+iy)\right|dx \leq  \| F\|_{h^{\Phi}}^{0},     $$
we deduce that
\begin{equation}\label{eq:noyaonqamaoson}
\| f\|_{L^{\Phi}}^{0}  \leq  \|F\|_{h^{\Phi}}^{0} .
 \end{equation}
The conclusion follows from the Relations (\ref{eq:noyaonqamaosaqppon}) and (\ref{eq:noyaonqamaoson}).
\end{proof}

\begin{lem}\label{pro:mainqaqqaq2}
Let $\Phi$ be an N$-$function such that $\Phi \in \nabla_{2}$.  For all  $F\in h^{\Phi}(\mathbb{C}_{+})$  there exists a unique $f\in  L^{\Phi}(\mathbb{R})$ such that
$$  F(x+iy)= U_{f}(x+iy), ~~ \forall~ x+iy \in \mathbb{C_{+}}.     $$
Moreover, 
$$  \lim_{y\to 0}\|F(.+iy)-f\|_{L^{\Phi}}^{0}=0     $$
and 
$$\|F\|_{h^{\Phi}}^{0}=\lim_{y\to 0}\|U_{f}(.+iy)\|_{L^{\Phi}}^{0}=\|f\|_{L^{\Phi}}^{0}.$$
\end{lem}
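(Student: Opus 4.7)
The plan is to assemble the statement from results already established in the paper; no new analytic input is needed. First, I would apply Theorem \ref{pro:main11a6} to the N-function $\Phi$: for any $F \in h^{\Phi}(\mathbb{C}_{+})$ it produces a unique $f \in L^{\Phi}(\mathbb{R})$ such that $F(x+iy) = U_{f}(x+iy)$ on $\mathbb{C}_{+}$, together with $\|F\|_{h^{\Phi}}^{lux} = \|f\|_{L^{\Phi}}^{lux}$. This disposes of existence and uniqueness.

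Next, since $\Phi$ is an N-function with $\Phi \in \nabla_{2}$, by definition the complementary function $\Psi$ also belongs to $\Delta_{2}$, so both $\Phi$ and $\Psi$ satisfy the $\Delta_{2}$-condition. This is exactly the hypothesis needed to invoke Proposition \ref{pro:mainaqqaqq2}, which gives directly
$$ \lim_{y\to 0}\|U_{f}(\cdot+iy)-f\|_{L^{\Phi}}^{0}=0 \hspace*{0.5cm}\text{and}\hspace*{0.5cm} \lim_{y\to 0}\|U_{f}(\cdot+iy)\|_{L^{\Phi}}^{0}= \|f\|_{L^{\Phi}}^{0}. $$
Substituting $U_{f}(\cdot+iy) = F(\cdot+iy)$ yields the asserted convergence and the second equality of the \textit{moreover} clause.

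Finally, the identity $\|F\|_{h^{\Phi}}^{0} = \|f\|_{L^{\Phi}}^{0}$ is exactly relation (\ref{eq:noyaonqaaqmmaoson}) established inside the proof of Proposition \ref{pro:mainqaqaqq2}. I would simply recall the argument there: every competitor $G \in h^{\Psi}(\mathbb{C}_{+})$ with $\|G\|_{h^{\Psi}}^{lux}\le 1$ is of the form $G = U_{g}$ for a unique $g \in L^{\Psi}(\mathbb{R})$ with $\|g\|_{L^{\Psi}}^{lux}\le 1$ by applying Theorem \ref{pro:main11a6} to $\Psi$ (which is again an N-function since $\Phi \in \nabla_{2}$), and conversely any such $g$ produces such a $G$; Proposition \ref{pro:main90aqaqmq2} then identifies $\lim_{y\to 0}\int_{\mathbb{R}}|F(x+iy)G(x+iy)|\,dx$ with $\int_{\mathbb{R}}|f(x)g(x)|\,dx$, so the two suprema coincide.

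There is no genuine obstacle here; the lemma is a synthesis of Theorem \ref{pro:main11a6} and Propositions \ref{pro:main90aqaqmq2}, \ref{pro:mainaqqaqq2}, \ref{pro:mainqaqaqq2}. The only point that requires care is to check that the $\nabla_{2}$ hypothesis really delivers $\Psi \in \Delta_{2}$, so that the duality-type results whose conclusions we are quoting are genuinely available in the present setting.
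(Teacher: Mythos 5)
Your proposal is correct and coincides with the paper's own proof, which cites exactly the same three ingredients: Theorem \ref{pro:main11a6} for existence and uniqueness of $f$, Proposition \ref{pro:mainaqqaqq2} for the two limits, and Relation (\ref{eq:noyaonqaaqmmaoson}) from the proof of Proposition \ref{pro:mainqaqaqq2} for $\|F\|_{h^{\Phi}}^{0}=\|f\|_{L^{\Phi}}^{0}$. Your added check that $\Phi\in\nabla_{2}$ yields $\Psi\in\Delta_{2}$ is immediate from the paper's definition of the $\nabla_{2}$-condition.
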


\begin{proof}
The proof  follows from Theorem \ref{pro:main11a6}, Proposition \ref{pro:mainaqqaqq2} and Relation (\ref{eq:noyaonqaaqmmaoson})
\end{proof}

\begin{cor}\label{pro:mainqpmqapm2}
 Let $(\Phi,\Psi)$ be a complementary pair of $N-$functions such that both $\Phi$ and $\Psi$  satisfy to $\Delta_2-$condition. For  $G\in   h^{\Psi}(\mathbb{C}_{+})$, the map  $T_{G}$ defined by
 $$   T_{G}(F)=\lim_{y\to 0}\int_{\mathbb{R}}F(x+iy)G(x+iy)dx, ~~ \forall~F\in h^{\Phi}(\mathbb{C}_{+}),    $$
is a continuous linear form on   $h^{\Phi}(\mathbb{C}_{+})$. Moreover, 
 $$   \|T_{G}\|_{(h^{\Phi})^{*}} =  \|G\|_{h^{\Psi}}^{0}.  $$
  \end{cor}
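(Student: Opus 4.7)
The plan is to reduce the corollary to its scalar analogue on $L^{\Phi}(\mathbb{R})$ via the Poisson representation. By Theorem~\ref{pro:main11a6} every $F\in h^{\Phi}(\mathbb{C}_{+})$ is uniquely of the form $F=U_{f}$ with $f\in L^{\Phi}(\mathbb{R})$ and $\|F\|_{h^{\Phi}}^{lux}=\|f\|_{L^{\Phi}}^{lux}$, and analogously $G=U_{g}$ with $g\in L^{\Psi}(\mathbb{R})$. Since both $\Phi$ and $\Psi$ satisfy the $\Delta_{2}$-condition, Proposition~\ref{pro:main90aqaqmq2} guarantees that the limit defining $T_{G}(F)$ exists and that
\[
T_{G}(F)=\int_{\mathbb{R}}f(x)g(x)\,dx.
\]
Linearity of $T_{G}$ is then immediate.

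For the upper bound $\|T_{G}\|_{(h^{\Phi})^{*}}\le \|G\|_{h^{\Psi}}^{0}$, I would use
\[
|T_{G}(F)|\le \lim_{y\to 0}\int_{\mathbb{R}}|F(x+iy)G(x+iy)|\,dx
\]
together with the very definition of $\|G\|_{h^{\Psi}}^{0}$: when $\|F\|_{h^{\Phi}}^{lux}\le 1$ the right-hand side is at most $\|G\|_{h^{\Psi}}^{0}$, and homogeneity in $F$ yields the general estimate. This already gives continuity of $T_{G}$.

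The main obstacle, and the only step calling for a nontrivial idea, is the matching lower bound $\|T_{G}\|_{(h^{\Phi})^{*}}\ge\|G\|_{h^{\Psi}}^{0}$. The issue is that $\|G\|_{h^{\Psi}}^{0}$ is a supremum of unsigned integrals $\lim_{y\to 0}\int|FG|dx$, whereas $T_{G}(F)$ involves a signed integral. To close this gap, given $\varepsilon>0$ I would choose $F_{0}=U_{f_{0}}$ with $\|F_{0}\|_{h^{\Phi}}^{lux}\le 1$ such that, using Proposition~\ref{pro:main90aqaqmq2} again,
\[
\int_{\mathbb{R}}|f_{0}(x)g(x)|\,dx>\|G\|_{h^{\Psi}}^{0}-\varepsilon,
\]
and then perform a phase adjustment: set $\tilde f(x):=|f_{0}(x)|\,\overline{g(x)}/|g(x)|$ on $\{g\neq 0\}$ and $\tilde f(x):=0$ elsewhere. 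The pointwise bound $|\tilde f|\le|f_{0}|$ gives $\|\tilde f\|_{L^{\Phi}}^{lux}\le 1$, while by construction $\tilde f(x)g(x)=|f_{0}(x)g(x)|$. Taking $\tilde F=U_{\tilde f}$ then yields $\tilde F\in h^{\Phi}(\mathbb{C}_{+})$ with $\|\tilde F\|_{h^{\Phi}}^{lux}\le 1$ and $T_{G}(\tilde F)=\int_{\mathbb{R}}|f_{0}g|\,dx>\|G\|_{h^{\Psi}}^{0}-\varepsilon$, and letting $\varepsilon\to 0$ concludes the argument. The whole scheme thus rests on three ingredients already in hand: the isometric identification $F\leftrightarrow f$ of Theorem~\ref{pro:main11a6}, the boundary integration lemma of Proposition~\ref{pro:main90aqaqmq2}, and the modulus-preserving character of the isomorphism, which makes the phase adjustment legitimate.
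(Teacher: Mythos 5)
Your proposal is correct and follows essentially the same route as the paper: both reduce $T_{G}$ to the boundary pairing $\int fg$ via the Poisson identification of Theorem \ref{pro:main11a6} and Proposition \ref{pro:main90aqaqmq2}, and then identify $\|T_{G}\|_{(h^{\Phi})^{*}}$ with $\|g\|_{L^{\Psi}}^{0}=\|G\|_{h^{\Psi}}^{0}$ using Lemma \ref{pro:mainqaqqaq2} (equivalently Relation (\ref{eq:noyaonqaaqmmaoson})). The only difference is that you spell out the phase-adjustment argument $\tilde f=|f_{0}|\,\overline{g}/|g|$ needed to pass from the signed pairing to the unsigned supremum defining $\|\cdot\|^{0}$, a step the paper leaves implicit in its appeal to that lemma and to classical Orlicz duality.
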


\begin{proof}
The proof follows from Lemma \ref{pro:mainqaqqaq2}.
\end{proof}

\begin{lem}\label{pro:main 53ppaql}
Let $(\Phi,\Psi)$ be a complementary pair of $N-$functions such that both $\Phi$ and $\Psi$  satisfy to $\Delta_2-$condition. For  $f\in L^{\Phi}(\mathbb{R})$ and  $g\in L^{\Psi}(\mathbb{R})$, we have
\begin{equation}\label{eq:noyaonqavmaoson}
\int_{\mathbb{R}}\mathcal{H}(f)(x)\mathcal{H}(g)(x)dx =\int_{\mathbb{R}}f(x)g(x)dx
 \end{equation}
and 
\begin{equation}\label{eq:noyaonqamakoson}
\int_{\mathbb{R}}\mathcal{H}(f)(x)g(x)dx =-\int_{\mathbb{R}}f(x)\mathcal{H}(g)(x)dx.
 \end{equation}
\end{lem}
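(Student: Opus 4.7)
The plan is to first establish the second identity (\ref{eq:noyaonqamakoson}) by a density argument starting from the classical $L^{2}$ theory, and then deduce the first identity (\ref{eq:noyaonqavmaoson}) by replacing $g$ with $\mathcal{H}(g)$ and invoking the inversion formula $\mathcal{H}\circ\mathcal{H}=-\mathrm{Id}$ supplied by Proposition \ref{pro:main 5apqaq8qllm}.

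For the first step, I would begin with test functions $\varphi,\psi\in C_{c}(\mathbb{R})$. Both lie in $L^{2}(\mathbb{R})$, and the Hilbert transform is skew--adjoint on $L^{2}$ (its Fourier multiplier being $-i\operatorname{sgn}(\xi)$), so Plancherel's theorem yields
\[
\int_{\mathbb{R}}\mathcal{H}(\varphi)(x)\psi(x)\,dx=-\int_{\mathbb{R}}\varphi(x)\mathcal{H}(\psi)(x)\,dx,
\]
both integrals being absolutely convergent by Cauchy--Schwarz. Since $\Phi$ and $\Psi$ are $N$-functions in $\Delta_{2}$, Proposition \ref{pro:main706} ensures that $C_{c}(\mathbb{R})$ is dense in each of $L^{\Phi}(\mathbb{R})$ and $L^{\Psi}(\mathbb{R})$; choose $f_{n},g_{n}\in C_{c}(\mathbb{R})$ with $f_{n}\to f$ in $L^{\Phi}$ and $g_{n}\to g$ in $L^{\Psi}$. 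The hypothesis also forces $\Phi,\Psi\in\nabla_{2}$, so $\mathcal{H}$ is bounded on both Orlicz spaces, hence $\mathcal{H}(f_{n})\to\mathcal{H}(f)$ in $L^{\Phi}$ and $\mathcal{H}(g_{n})\to\mathcal{H}(g)$ in $L^{\Psi}$. H\"older's inequality in Orlicz spaces then allows me to pass to the limit on both sides of the $C_{c}$-identity, yielding (\ref{eq:noyaonqamakoson}).

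For the second step, observe that $\Psi$ is itself an $N$-function whose complementary function $\Phi$ lies in $\Delta_{2}$, so $\Psi\in\nabla_{2}$, and Proposition \ref{pro:main 5apqaq8qllm} applied to $g$ gives $\mathcal{H}(\mathcal{H}(g))=-g$ almost everywhere. Moreover $\mathcal{H}(g)\in L^{\Psi}(\mathbb{R})$ by boundedness of $\mathcal{H}$, so I may substitute $\mathcal{H}(g)$ in place of $g$ in (\ref{eq:noyaonqamakoson}) to obtain
\[
\int_{\mathbb{R}}\mathcal{H}(f)(x)\mathcal{H}(g)(x)\,dx=-\int_{\mathbb{R}}f(x)\mathcal{H}(\mathcal{H}(g))(x)\,dx=\int_{\mathbb{R}}f(x)g(x)\,dx,
\]
which is precisely (\ref{eq:noyaonqavmaoson}).

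The only delicate point is the $L^{2}$-skew adjointness used in the first step; this is a standard Fourier-analytic fact, and once granted everything else reduces to routine density and continuity arguments in the Orlicz setting.
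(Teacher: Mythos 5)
Your proof is correct, but it follows a genuinely different route from the paper's. The paper works complex-analytically: assuming $f,g$ real-valued, it forms the product $S(f)S(g)$, observes via Orlicz--H\"older that it lies in $H^{1}(\mathbb{C}_{+})$, and invokes the fact that the boundary function of an $H^{1}(\mathbb{C}_{+})$ function has vanishing integral (citing \cite[Corollary 13.7]{javadmas}); expanding $p=\bigl(fg-\mathcal{H}(f)\mathcal{H}(g)\bigr)+i\bigl(\mathcal{H}(f)g+\mathcal{H}(g)f\bigr)$ then yields both identities simultaneously from the real and imaginary parts. You instead prove the skew-adjointness identity (\ref{eq:noyaonqamakoson}) first, by reducing to $C_{c}(\mathbb{R})$ test functions where it is the classical $L^{2}$ Plancherel computation with multiplier $-i\,\mathrm{sgn}(\xi)$, and then passing to the limit using density (Proposition \ref{pro:main706}, valid since $\Phi,\Psi\in\Delta_{2}$), boundedness of $\mathcal{H}$ on both $L^{\Phi}$ and $L^{\Psi}$ (since $\Phi,\Psi\in\nabla_{2}$), and Orlicz--H\"older; the identity (\ref{eq:noyaonqavmaoson}) then falls out by substituting $\mathcal{H}(g)$ for $g$ and using $\mathcal{H}\circ\mathcal{H}=-\mathrm{Id}$ from Proposition \ref{pro:main 5apqaq8qllm} (applied with $\Psi$ in the role of $\Phi$, which is legitimate as $\Psi\in\nabla_{2}$). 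Your approach is more elementary and self-contained within the real-variable framework, avoiding the external $H^{1}(\mathbb{C}_{+})$ mean-value result; the paper's approach has the virtue of producing both identities in one stroke and of being consistent with its representation-theoretic machinery. Two small points worth making explicit in your write-up: the identities are bilinear (not sesquilinear), so the $L^{2}$ multiplier computation for real-valued test functions extends to complex-valued ones automatically; and the principal-value limit defining $\mathcal{H}(\varphi)$ for merely continuous $\varphi\in C_{c}(\mathbb{R})$ exists only almost everywhere, though this agrees a.e.\ with the $L^{2}$ multiplier operator, so nothing breaks.
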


 \begin{proof}
Let  $f\in L^{\Phi}(\mathbb{R})$ and $g\in L^{\Psi}(\mathbb{R})$. We can assume without loss of generality that $f$ and $g$ are real-valued. Since  $S(f) \in H^{\Phi}(\mathbb{C}_{+})$ and $S(g) \in H^{\Psi}(\mathbb{C}_{+} )$, we deduce that the product $S(f)S(g)$ belongs to $ H^{1}(\mathbb{C}_{+})$, according H\"older's inequality in Orlicz spaces. It follows that
$$ p(x):=\lim_{y\to 0}S(f)(x+iy)S(g)(x+iy)= \left( f(x)+i\mathcal{H}(f)(x)\right) \left( g(x)+i\mathcal{H}(g)(x)\right),         $$    
for almost any $x\in \mathbb{R}$ and 
 $$ \int_{\mathbb{R}}p(x)dx=0,              $$
(see.  \cite[corollary 13.7]{javadmas}).  Since 
 $$ p= \left(fg-\mathcal{H}(f)\mathcal{H}(g)\right) +i \left( \mathcal{H}(f)g + \mathcal{H}(g)f \right), $$
the equalities (\ref{eq:noyaonqavmaoson}) and (\ref{eq:noyaonqamakoson}) follow.
\end{proof}

\proof[Proof of Theorem \ref{pro:main 5aqk3pl}.]
$i)$ For $G \in h^{\Phi}(\mathbb{C}_{+})$, consider $T_{G}$ the map   defined by
 $$   T_{G}(F)=\lim_{y\to 0}\int_{\mathbb{R}}F(x+iy)G(x+iy)dx, ~~ \forall~F\in h^{\Phi}(\mathbb{C}_{+}).    $$ 
The map $G\longmapsto T_{G}$ is continuous, linear and isometric from $h^{\Phi}(\mathbb{C}_{+})$ to $\left(h^{\Phi}(\mathbb{C}_{+})\right)^{*}$, according to Corollary \ref{pro:mainqpmqapm2}. We still have to prove that $G\longmapsto T_{G}$ is surjective.
Let   $T\in \left(h^{\Phi}(\mathbb{C}_{+})\right)^{*}$ and put for $f\in L^{\Phi}(\mathbb{R})$,   
 $$ T_{1}(f)= T(U_{f}).   $$
By construction, $T_{1}\in \left(L^{\Phi}(\mathbb{R})\right)^{*}$. Indeed,  
$$ | T_{1}(f) | = | T(U_{f})| \leq \|T\|_{\left(h^{\Phi}\right)^{*}} \|U_{f}\|_{h^{\Phi}}^{lux} \leq \|T\|_{\left(h^{\Phi}\right)^{*}}\|f\|_{L^{\Phi}}^{lux}.  $$
So there is a unique function $ g\in L^{\Psi}(\mathbb{R})$ such that $$ T_{1}(f)=\int_{\mathbb{R}}f(x)g(x)dx, ~~ \forall~f\in L^{\Phi}(\mathbb{R}).   $$
For  $F\in h^{\Phi}(\mathbb{C}_{+})$, there exists a unique function $f \in L^{\Phi}(\mathbb{R})$ such that  $$   F(x+iy)=U_{f}(x+iy), ~~ \forall~ x+iy \in \mathbb{C}_{+}.  $$
Since $G:=U_{g}\in h^{\Psi}(\mathbb{C}_{+})$, it follows that 
$$  \int_{\mathbb{R}}f(x)g(x)dx
 = \lim_{y\to 0}\int_{\mathbb{R}}F(x+iy)G(x+iy)dx,    $$
according to Proposition \ref{pro:main90aqaqmq2}. We deduce that 
 $$ T(F)= \lim_{y\to 0}\int_{\mathbb{R}}F(x+iy)G(x+iy)dx, ~~ \forall~F\in h^{\Phi}(\mathbb{C}_{+}).
        $$
Therefore, $G\longmapsto T_{G}$ is surjective from $h^{\Phi}(\mathbb{C}_{+})$ to $\left(h^{\Phi}(\mathbb{C}_{+})\right)^{*}$.\\ 
\\$ii)$ For $G \in H^{\Phi}(\mathbb{C}_{+})$, consider $T_{G}$ the map   defined by
 $$   T_{G}(F)=\lim_{y\to 0}\int_{\mathbb{R}}F(x+iy)\overline{G(x+iy)}dx, ~~ \forall~F\in H^{\Phi}(\mathbb{C}_{+}).    $$
By construction, the anti-linearity, the injectivity and the continuity of the map $G\longmapsto T_{G}$ on $H^{\Phi}(\mathbb{C}_{+})$ are obvious. We still have to prove that $G\longmapsto T_{G}$ is surjective.
For  $L\in \left(H^{\Phi}(\mathbb{C}_{+})\right)^{*}$, consider $L_{\mathbb{R}}$, the map defined by
$$  L_{\mathbb{R}}(F)= \mathrm{Re}\left( L(F)\right),  ~~ \forall~F\in H^{\Phi}(\mathbb{C}_{+}).        $$
By construction, $L_{\mathbb{R}}$ is a real-valued continuous linear form on $H^{\Phi}(\mathbb{C}_{+})$.
Moreover,
$$  L(F) =L_{\mathbb{R}}(F) - iL_{\mathbb{R}}(iF), ~~ \forall~F\in  H^{\Phi}(\mathbb{C}_{+}).   $$ 
Consider $L_{1}$ the map  defined by $$ L_{1}(f) = L_{\mathbb{R}}\left(U_{f}+iU_{\mathcal{H}(f)}\right) , ~~ \forall~f\in L_{\mathbb{R}}^{\Phi},      $$
where  $L_{\mathbb{R}}^{\Phi}$ is the subset of $L^{\Phi}(\mathbb{R})$ consisting of real-valued functions. By construction,  $L_{1}$ is a continuous linear form on  $L_{\mathbb{R}}^{\Phi}$. Indeed,
$$ | L_{1}(f)| = \left| L_{\mathbb{R}}\left(U_{f}+iU_{\mathcal{H}(f)}\right)\right| \lesssim \|L\|_{\left(H^{\Phi}\right)^{*}} \|U_{f}+iU_{\mathcal{H}(f)}\|_{H^{\Phi}}^{lux}\lesssim \|L\|_{\left(H^{\Phi}\right)^{*}} \|f\|_{L^{\Phi}}^{lux}, ~~ \forall~f\in L_{\mathbb{R}}^{\Phi}.  $$
There exists $\widetilde{L_{1}}\in \left(L^{\Phi}(\mathbb{R})\right)^{*}$ such that
$$  \widetilde{L_{1}}(f)=L_{1}(f),  ~~ \forall~f\in L_{\mathbb{R}}^{\Phi},      $$
thanks to  Hahn's Banach theorem.  
There thus exists a unique $g\in L^{\Psi}(\mathbb{R})$ with real values such that
$$  \widetilde{L_{1}}(f)=\int_{\mathbb{R}}f(x)g(x)dx, ~~ \forall~f\in L_{\mathbb{R}}^{\Phi}.      $$
We deduce that 
\begin{equation}\label{eq:noyaonqaplson}
L_{\mathbb{R}}\left( U_{f} +iU_{\mathcal{H}(f)}\right)= \int_{\mathbb{R}}f(x)g(x)dx, ~~ \forall~f\in L_{\mathbb{R}}^{\Phi}.
 \end{equation}
For  $f\in L_{\mathbb{R}}^{\Phi}$, we have
$$   L_{\mathbb{R}}\left(i( U_{f} +iU_{\mathcal{H}(f)})\right)=  L_{\mathbb{R}}\left(U_{\mathcal{H}(-f)} +iU_{\mathcal{H}(\mathcal{H}(-f))}\right) = \int_{\mathbb{R}}\mathcal{H}(-f)(x)g(x)dx,    $$
 since $\mathcal{H}(\mathcal{H}(f))=-f$, almost everywhere on  $\mathbb{R}$. We deduce that 
\begin{equation}\label{eq:noyaonqaplsghon}
 L_{\mathbb{R}}\left(i( U_{f} +iU_{\mathcal{H}(f)})\right) =-\int_{\mathbb{R}}\mathcal{H}(f)(x)g(x)dx, ~~ \forall~f\in L_{\mathbb{R}}^{\Phi}.
 \end{equation}
For $F\in H^{\Phi}(\mathbb{C}_{+})$, there is a unique function  $f \in H^{\Phi}(\mathbb{R})$ such that
$$  F(x+iy)= \frac{1}{2\pi i}\int_{\mathbb{R}}\frac{f(t)}{t-z}dt=\frac{1}{2}\left(U_{f}(x+iy)+iU_{\mathcal{H}(f)}(x+iy) \right),  ~~\forall~z=x+iy \in \mathbb{C}_{+},      $$
according to  Theorem \ref{pro:main11aqa6}. Let $f=f_{1}+if_{2}$,
where $f_{1}$ and $f_{2}$ are real-valued functions,
we have
\begin{align*}
L(F)
&= \frac{1}{2}\left(L_{\mathbb{R}}\left( U_{f_{1}-\mathcal{H}(f_{2})} +i U_{\mathcal{H}(f_{1})-\mathcal{H}(\mathcal{H}(f_{2}))}\right)-iL_{\mathbb{R}}\left(i\left( U_{f_{1}-\mathcal{H}(f_{2})} +i U_{\mathcal{H}(f_{1})-\mathcal{H}(\mathcal{H}(f_{2}))}\right)  \right)  \right) \\
&=\frac{1}{2} \left( \int_{\mathbb{R}}\left(f_{1}(x) -\mathcal{H}(f_{2})(x)  \right)g(x)dx +i \int_{\mathbb{R}}\left( \mathcal{H}(f_{1})(x)-\mathcal{H}(\mathcal{H}(f_{2}))(x) \right) g(x)dx      \right) \\
&=\frac{1}{2}\left( \int_{\mathbb{R}}f(x)g(x)dx + i\int_{\mathbb{R}}\mathcal{H}(f)(x)g(x)dx    \right),
\end{align*}
thanks to Relations (\ref{eq:noyaonqaplson}) and  (\ref{eq:noyaonqaplsghon}). We deduce that
$$ L(F) =\frac{1}{4}\int_{\mathbb{R}}\left(f(x)+i \mathcal{H}(f)(x)\right)\left(g(x)-i \mathcal{H}(g)(x)\right)dx,    $$
according to  Lemma \ref{pro:main 53ppaql}.  Taking   $G=U_{g/4}+iU_{\mathcal{H}(g/4)} \in H^{\Psi}(\mathbb{C}_{+})$, it follows that
$$ L(F) = \lim_{y\to 0}\int_{\mathbb{R}}F(x+iy)\overline{G(x+iy)}dx.     $$
\epf

\bibliographystyle{plain}

\end{document}